\newtheorem{theorem}{Theorem}[section]
\newtheorem{lemma}[theorem]{Lemma}
\newtheorem{corollary}[theorem]{Corollary}
\newtheorem*{theorem*}{Theorem}
\theoremstyle{definition}
\newtheorem{definition}[theorem]{Definition}
\newtheorem{example}[theorem]{Example}
\newtheorem*{assumption*}{Assumption}
\newtheorem{remark}[theorem]{Remark}
\def\R{\mathbb{R}}
\def\eps{\varepsilon}
\def\0{\mathbf{0}}
\def\phi{\varphi}
\def\P{\mathcal{P}}
\def\ol{\overline}
\DeclareMathOperator{\diam}{diam}
\DeclareMathOperator{\Aut}{Aut}
\DeclareMathOperator{\Sp}{Sp}
\DeclareMathOperator*{\esssup}{ess\,sup}
\DeclareMathOperator{\ext}{ext}
\DeclareMathOperator{\co}{co}
\newcommand{\comment}[1]{}
\newcommand{\norm}[1]{\left\Vert #1 \right\Vert}
\numberwithin{equation}{section}
\def\@maketitle{%
  \newpage
  \null
  \vskip 2em%
  \begin{center}%
  \let \footnote \thanks
    {\Large\bfseries \@title \par}%
    \vskip 1.5em%
    {\normalsize
      \lineskip .5em%
      \begin{tabular}[t]{c}%
        \@author
      \end{tabular}\par}%
    \vskip 1em%
    {\normalsize \@date}%
  \end{center}%
  \par
  \vskip 1.5em}
\begin{document}

\title{\sc \LARGE Hilbert isometries and maximal deviation preserving maps on JB-algebras}

\author{Mark Roelands%
\thanks{Email: \texttt{mark.roelands@gmail.com}}}
\affil{School of Mathematics, Statistics \& Actuarial Science, University of Kent, Canterbury, CT2 7NX,
United Kingdom}

\author{Marten Wortel%
\thanks{Email: \texttt{marten.wortel@up.ac.za}. This research was supported in part by the DST-NRF Centre of Excellence in Mathematical and Statistical Sciences (CoE-MaSS), grant reference number BA2017/260. }}
\affil{Department of Mathematics and Applied Mathematics, University of Pretoria, Private Bag X20 Hatfield, 0028 Pretoria, South Africa}

\maketitle
\date{}

\begin{abstract}
In this paper we characterize the surjective linear variation norm isometries on JB-algebras. Variation norm isometries are precisely the maps that preserve the maximal deviation, the quantum analogue of the standard deviation, which plays an important role in quantum statistics. Consequently, we characterize the Hilbert's metric isometries on cones in JB-algebras. 
\end{abstract}

{\small {\bf Keywords:} JB-algebras, Hilbert's metric, maximal deviation, linear isometries.}

{\small {\bf Subject Classification:} Primary 46L70; Secondary 81P16, 15A86, 58B20 }

\section{Introduction}

Linear bijections on JB-algebras that preserve the maximal deviation, which is the quantum analogue of the standard deviation, play an important role in quantum statistics, see \cite{Ha,M1,MB}. For a unital JB-algebra $A$ with state space $K$, the square of the maximal deviation is defined by $\sup_{\phi \in K} [\phi(x^2) - \phi(x)^2]$. In \cite{MB}, Moln\'ar and Barczy characterized these maps for the self-adjoint elements of the bounded operators on a Hilbert space. Later, in \cite{M1}, Moln\'ar generalized this characterization to von Neumann algebras without a type $I_2$ direct summand, which in turn was generalized by Hamhalter: in \cite[Theorem~1.1]{Ha} he showed that such a map $\Phi$ on a JBW-algebra without a type $I_2$ direct summand is of the form $\Phi(x) = \eps Jx + \phi(x)e$, where $\eps \in \{\pm 1\}$, $J$ is a Jordan isomorphism, $\phi$ is a linear functional, and $e$ is the unit. 

In the results mentioned above, a key observation is that the maximal deviation is one half of the diameter of the spectrum, which is a seminorm (with kernel the span of $e$). We call the induced norm on the quotient the variation norm, and so each maximal deviation preserving linear bijection induces a linear variation norm isometry on this quotient. The main result of this paper is a characterization of these linear surjective variation norm isometries on quotients of unital JB-algebras: in Theorem~\ref{thm:char_var_isoms}, we show that if $A$ and $B$ are unital JB-algebras, then each linear surjective isometry $S \colon A / \Sp(e) \to B / \Sp(e)$ is of the form $S = \eps [J]$ for some $\eps \in \{\pm 1\}$ and a Jordan isomorphism $J \colon A \to B$. A simple argument (see Lemma~\ref{lem:ham_char}) shows that this characterization implies Hamhalter's characterization, and so our main result generalizes Hamhalter's result.

Moreover, an important consequence of this result (in fact, this was our original motivation) is the characterization of surjective Hilbert's metric isometries on cones of unital JB-algebras. In \cite[Theorem~2.17]{LRW2}, extending the approach in \cite{Bo}, we showed that if $A$ and $B$ are unital JB-algebras and $f \colon A^\circ_+ \to B^\circ_+$ is a unital surjective Hilbert isometry, then $S = \log \circ f \circ \exp$ is a linear variation norm isometry from $A / \Sp(e)$ to $B / \Sp(e)$. Combined with the above characterization, this easily yields Theorem~\ref{thm:char_Hilbert_isoms}: every surjective Hilbert's metric isometry $f \colon A^\circ_+ \to B^\circ_+$ is of the form $f(x) = U_y J(x^\eps)$ for some $y \in B^\circ_+$, where $U_y$ denotes the quadratic representation. This result generalizes a result by Moln\'ar, \cite[Theorem~2]{M2}, that characterizes Hilbert's metric isometries on positive definite operators on a complex Hilbert space of dimension at least three, and our result in \cite{LRW2} where we prove this for JBW-algebras. Moreover, this result also complements our earlier work \cite{LRW1}, where we characterized the Hilbert's metric isometries on cones in $C(K)$-spaces. Other works on Hilbert's metric isometries on finite dimensional cones include \cite{dlH,LW,MT,Sp}.

As the name suggests, Hilbert's metric goes back to Hilbert \cite{H}, who defined a distance $\delta_H$ between points on an open bounded convex set $\Omega$ in a finite dimensional real vector space by

\[
\delta_H(x,y):=\log\left(\frac{\norm{x'-y}\norm{y'-x}}{\norm{x'-x}\norm{y'-y}}\right),
\] 
where $x'$ and $y'$ are the points of intersection of the line through $x$ and $y$ with the boundary $\partial \Omega$ such that $x$ is between $x'$ and $y$, and $y$ is between $x$ and $y'$. These Hilbert's metric spaces $(\Omega,\delta_H)$ are Finsler manifolds and generalize Klein's model for the real hyperbolic space. Hilbert's metric spaces also play an important role in the solution of Hilbert's fourth problem, see \cite{AP}, and possess features of nonpositive curvature \cite{B,KN}. The geometry of Hilbert's metric spaces has been studied increasingly over the past few years and a wide selection of results and theory can be found in the survey \cite{PT}. There is a slightly more general version of Hilbert's metric $d_H$, which describes the distance between pairs of rays in the interior of a cone in an order unit space. The metric $d_H$ is given in terms of the partial ordering induced by the cone and was introduced by Birkhoff in \cite{Bi}. This version of Hilbert's metric has found numerous applications in the spectral theory of linear and nonlinear operators, ergodic theory, and fractal analysis, see \cite{ACS,CPR1,CPR2,LL1,LL2,L,Neeb,Nuss,Up} and the references therein. 

The theory of JB-algebras seems to be closely related to understanding Hilbert's metric isometries on the interior of cones in order unit spaces. Supporting evidence for this claim was provided by Walsh in \cite{W}, who showed amongst other things that for finite dimensional order unit spaces $A$ the Hilbert's metric isometry group on $A_+^\circ$ does not equal the group of projectivities if and only if $A$ is a Euclidean Jordan algebra that is neither $\R^2$ nor a spin factor, see \cite[Corollary~1.4]{W}. In this case, the group of projectivities has index 2 in the isometry group and the alternative isometries are obtained by adjoining the inversion map $x\in A_+^\circ\mapsto x^{-1}\in A_+^\circ$. To date it remains an open problem whether these results can be generalized to infinite dimensional order unit spaces, but for JB-algebras, Theorem~\ref{thm:isom_group} shows that there are also more Hilbert's metric isometries on the cone than projectivities if and only if the corresponding JB-algebra is neither $\R^2$ nor a spin factor.
\\
\comment{

In this paper we prove two main results. The first main result is the characterization of Hilbert's metric isometries on cones in JB-algebras in Theorem~\ref{thm:char_Hilbert_isoms}, and consequently, the corresponding isometry group. In view of \cite[Corollary~1.4]{W} mentioned above we make the following contribution in Theorem~\ref{thm:isom_group}, showing that for JB-algebras there are also more Hilbert's metric isometries on the cone than projectivities if and only if the corresponding JB-algebra is either $\R^2$ or a spin factor. Similarly, the additional isometry comes from the inversion map. In particular, we show that up to possibly applying the inversion map and a quadratic representation, all Hilbert's metric isometries are induced by Jordan ismorphisms. We use the fact that Hilbert's metric isometries on the cone of JB-algebras yield variation norm isometries. This was shown in our earlier work \cite[Theorem~2.17]{LRW2} which extends the approach in \cite{Bo}. Our result also extends a result by Moln\'ar, \cite[Theorem~2]{M2}, that characterizes Hilbert's metric isometries on positive definite operators on a complex Hilbert space of dimension at least three. The results in this paper also complement our earlier work \cite{LRW2} and \cite{LRW1}, where we characterized the Hilbert's metric isometries on cones in JBW-algebras and cones in $C(K)$-spaces, respectively. Other works on Hilbert's metric isometries on finite dimensional cones include \cite{dlH,LW,MT,Sp}.

The second main result of this paper is the characterization of variation norm isometries on JB-algebras in Theorem~\ref{thm:char_var_isoms}. More precisely, the characterization of variation norm isometries on unital JB-algebras with the span of the unit quotiented out. Our work also characterizes variation norm isometries on nonunital JB-algebras, Corollary~\ref{cor:char_var_isoms_nonunital}, as they can be identified with variation norm isometries on the unitization with the span of the unit quotiented out. Linear maps on JB-algebras that preserve the maximal deviation, which is the quantum analogue of the standard deviation, play an important role in quantum statistics, see \cite{Ha,M1,MB}, and it turns out that a linear map preserves the maximal deviation if and only if it is a variation norm isometry. Moln\'ar characterized the variation norm isometries on the sefladjoint elements of a von Neumann algebra without a type $I_2$ summand in \cite{M1} and Hamhalter characterized the variation norm isometries on JBW-algebras without a type $I_2$ summand in \cite{Ha}. We will show in this paper that linear variation norm isometries are in bijective correspondence with Hilbert's metric isometries on cones in JB-algebras, and Theorem~\ref{thm:char_var_isoms} shows that variation norm isometries in turn are quotients of Jordan isomorphisms up to a sign on the JB-algebra with the span of the unit quotiented out. 

}

We will now explain the ideas behind the proof of our main result, Theorem~\ref{thm:char_var_isoms}: the characterization of variation norm isometries $S \colon A / \Sp(e) \to B / \Sp(e)$. To obtain the desired Jordan isomorphism, we produce a projection orthoisomorphism: a bijection on projections that preserves orthogonality in both directions. A generalization of Bunce and Wright \cite{BW} of Dye's Theorem \cite{Dye} to JBW-algebras without a type $I_2$ direct summand will then yield a Jordan isomorphism. To construct this projection orthoisomorphism, we consider the geometry of the closed linear span of the extreme points of the dual balls of $A / \Sp(e)$ and $B / \Sp(e)$. These spaces are the preduals of the quotient of the atomic parts of $A^{**}$ and $B^{**}$ by $\Sp(e)$, and they are preserved by $S^*$; we denote the induced map between these spaces by $S'$. It turns out that the maximal norm closed faces of the balls of these spaces of at most a certain diameter are of the form $F_p - F_{p^\perp}$ (where $F_p$ is the face supported by $p$, see Section~\ref{sec:3.1}) with $p$ or $p^\perp$ an atom in $A^{**}$ or $B^{**}$. The fact that these maximal norm closed faces of at most a certain diameter have to be preserved by $S'$ yields a map between the atoms of $A^{**}$ and $B^{**}$ which can be shown to extend to a projection orthoisomorphism between the atomic parts. Now \cite[Corollary~2]{BW} yields a Jordan isomorphism between the atomic parts of $A^{**}$ and $B^{**}$ without their type $I_2$ direct summands, and for the type $I_2$ direct summand we construct a Jordan isomorphism using the map $S'^*$ and a characterization by Stacey \cite{S} of type $I_2$ JBW-algebras. In the final step we note that $A$ (resp.\ $B$) can be embedded into the atomic part of $A^{**}$ (resp.\ $B^{**}$), so the Jordan isomorphism restricts to a Jordan isomorphism from $A$ to $B$. 

\section{Preliminaries}\label{sec:2}

This section is concerned with providing some basic definitions, facts, and preliminary results about Hilbert's metric and JB-algebras.

\subsection{Order unit spaces}

Let $A$ be a partially ordered real vector space with cone $A_+$. So, $A_+$ is convex, $\lambda A_+\subseteq A_+$ for all $\lambda\ge 0$, $A_+\cap -A_+=\{0\}$, and the  partial ordering $\leq$ on $A$ is given by $x\leq y$ if $y-x\in A_+$. Suppose that there exists an {\em order unit} $u\in A_+$, i.e., for each $x\in A$ there exists $\lambda>0$ such that $-\lambda u\leq x\leq \lambda u$. Furthermore, assume that $A$ is {\em Archimedean}, that is to say, if $nx\leq u$ for all $n=1,2,\ldots$, then $x\leq 0$.  In that case $A$ can be equipped with the {\em order unit norm},
\[
\|x\|_u:=\inf\{\lambda>0 \colon -\lambda u\leq x\leq \lambda u\},
\]
and $(A,\|\cdot\|_u)$ is called an {\em order unit space}, see \cite{HO}.
It is not hard to show, see e.g. \cite{LRW1}, that $A_+$ has nonempty interior $A_+^\circ$ in $(A,\|\cdot\|_u)$ and $A_+^\circ=\{x\in A\colon \mbox{$x$ is an order unit of $A$}\}$.

On $A_+^\circ$  Hilbert's metric is defined as follows. For $x,y\in A_+^\circ$ let
\[
M(x/y):=\inf\{\beta>0\colon x\leq \beta y\}.
\]
Note that as $y\in A_+^\circ$ is an order unit, $M(x/y)<\infty$. On $A_+^\circ$, {\em Hilbert's metric} is given by
\begin{equation*}\label{d_H}
d_H(x,y) = \log M(x/y)M(y/x).
\end{equation*}
It is well known (cf.  \cite{LNBook,NMem}) that $d_H$ is a pseudo metric, as $d_H(\lambda x,\mu y)=d_H(x,y)$ for all $\lambda,\mu>0$ and $x,y\in A_+^\circ$. However, $d_H(x,y)=0$ for $x,y\in A_+^\circ$ if and only if $x=\lambda y$ for some $\lambda>0$, so  that $d_H$ is a metric on the set of  rays in $A_+^\circ$, which we shall denote by $\overline{A}_+^\circ$. Elements of $\overline{A}_+^\circ$ will be denoted by $\ol{x}$.

The set $\Aut(A_+)$ denotes the automorphisms of $A_+$, i.e., linear bijections of $A$ that preserve $A_+$. If $T \in \Aut(A_+)$, then $M(Tx/Ty) = M(x/y)$ and hence $T$ is a surjective Hilbert's metric isometry.

The \emph{state space} $K$ of an order unit space consists of all positive functionals mapping $u$ to one. By \cite[Theorem~1.19]{AS1}, the state space $K$ in the dual of an order unit space has the property that it generates the dual ball $B_{X^*}$ in the sense that it equals the convex hull
\begin{equation}\label{eq:ball_base_norm_space}
B_{X^*}=\mathrm{co}(K\cup-K).
\end{equation}
It follows that the dual cone $A_+^*$ of an order unit space is generating. Moreover, \cite[Proposition~1.26]{AS1} yields the following decomposition of elements in $A^*$.

\begin{lemma}\label{lem:decomposition_functionals}
Let $A$ be an order unit space. For any $\varphi\in A^*$ there are $\psi, \eta\in 
A_+^*$ such that $\varphi=\psi-\eta$ and $\|\varphi\|=\|\psi\|+\|\eta\|$.
\end{lemma}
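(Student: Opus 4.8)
The plan is to read the decomposition directly off the fact that the state space $K$ generates the dual ball, i.e.\ off \eqref{eq:ball_base_norm_space}, combined with the elementary identity $\norm{\phi} = \phi(u)$ valid for every positive functional $\phi$. First I would dispose of the trivial case $\varphi = 0$ by taking $\psi = \eta = 0$, so assume $\varphi \neq 0$ and consider the normalized functional $\varphi/\norm{\varphi}$, which lies in the closed unit ball of $A^*$. By \eqref{eq:ball_base_norm_space} this ball equals $\co(K \cup -K)$, and since both $K$ and $-K$ are convex, every element of the convex hull can be grouped into the $K$-part and the $(-K)$-part to be written as $\alpha k - \beta k'$ with $k, k' \in K$ and $\alpha, \beta \geq 0$ satisfying $\alpha + \beta = 1$; thus $\varphi/\norm{\varphi} = \alpha k - \beta k'$.

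Rescaling, I would set $\psi := \norm{\varphi}\,\alpha k$ and $\eta := \norm{\varphi}\,\beta k'$. Both are nonnegative multiples of states and hence lie in $A_+^*$, and by construction $\varphi = \psi - \eta$. For the norm identity I would use that a positive functional $\phi$ satisfies $\norm{\phi} = \phi(u)$: the inequality $\norm{\phi} \leq \phi(u)$ follows from $-\phi(u) \leq \phi(x) \leq \phi(u)$ whenever $\norm{x}_u \leq 1$, and the reverse inequality from evaluating at $x = u$. In particular $\norm{k} = \norm{k'} = 1$ because states send $u$ to $1$, so $\norm{\psi} = \norm{\varphi}\,\alpha$ and $\norm{\eta} = \norm{\varphi}\,\beta$, whence $\norm{\psi} + \norm{\eta} = \norm{\varphi}(\alpha + \beta) = \norm{\varphi}$.

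There is no serious obstacle here. The reverse inequality $\norm{\varphi} \leq \norm{\psi} + \norm{\eta}$ holds for any decomposition by the triangle inequality, so the real content of the lemma is the \emph{existence} of a decomposition achieving equality, which is exactly what the construction provides. The only point deserving a little care is that the convex-hull representation furnishes coefficients with $\alpha + \beta = 1$ rather than merely $\alpha + \beta \leq 1$; this is what makes the two norms add up with no slack, and it is guaranteed precisely because \eqref{eq:ball_base_norm_space} presents the ball as the convex hull itself and not as a proper subset of it.
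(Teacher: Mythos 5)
Your proof is correct. There is nothing to compare line by line, because the paper does not actually prove this lemma: it obtains it by citing \cite[Proposition~1.26]{AS1}. Your argument is precisely the standard derivation underlying that reference, and it uses only material the paper has already recorded, namely \eqref{eq:ball_base_norm_space} (quoted from \cite[Theorem~1.19]{AS1}) together with the elementary identity $\norm{\phi}=\phi(u)$ for positive $\phi$. The grouping of a convex combination of points of $K\cup -K$ into a $K$-part and a $(-K)$-part is valid since $K$ and $-K$ are convex (and the degenerate cases $\alpha=0$ or $\beta=0$ are harmless, taking $\psi$ or $\eta$ to be zero), and the rescaling then gives norm additivity exactly. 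Your closing observation also pinpoints the only genuinely nontrivial ingredient: everything hinges on the dual ball being the convex hull $\co(K\cup -K)$ itself, not merely its closed convex hull --- this is where the weak-* compactness of $K$ is hidden, inside the cited theorem --- and granting that, the lemma is elementary. So your route is not genuinely different from the paper's; it simply makes explicit the proof that the paper delegates to the reference.
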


\subsection{JB-algebras}

A \emph{Jordan algebra} $(A, \circ)$ is a commutative, not necessarily associative algebra such that
\[
x \circ (y \circ x^2) = (x \circ y) \circ x^2 \mbox{\quad  for all }x,y \in A.
\]
A \emph{JB-algebra} $A$ is a normed, complete real Jordan algebra satisfying,
\begin{align*}
\norm{x \circ y} &\leq \norm{x}\norm{y}, \\
\norm{x^2} &= \norm{x}^2, \\
\norm{x^2} &\leq \norm{x^2 + y^2}
\end{align*}
for all $x,y \in A$. If the JB-algebra has an algebraic unit $e$, it is an order unit space and the JB-norm mentioned above corresponds to the order unit norm $\|\cdot\|_e$. An element $x\in A$ is called \emph{central} if $x\circ(y\circ z)=y\circ(x\circ z)$ for all $y,z\in A$. The \emph{spectrum} of $x\in A$, denoted by $\sigma(x)$, is defined to be the set of $\lambda\in\R$ such that $x-\lambda e$ is not invertible in JB$(x,e)$, the JB-algebra generated by $x$ and $e$, see \cite[3.2.3]{HO}. Moreover, there is a functional calculus: JB$(x,e)\cong C(\sigma(x))$.

When studying Hilbert's metric on  $\ol{A}_+^\circ$ in unital JB-algebras, the  \emph{variation seminorm}
$\norm{\cdot}_v$ on $A$ given by,
\[
\norm{x}_v := \diam \sigma(x)=\max\sigma(x)-\min\sigma(x),
\]
will play an important role. The kernel of this seminorm is the span of $e$, and on the quotient space $[A]:= A / \Sp(e)$ it is a norm. We proceed to show that if
$\|\cdot\|_q$ is the quotient norm of $\|\cdot\|$ on $[A]$, then $2\|[x]\|_q =\|[x]\|_v$ for all $[x]\in [A]$. Indeed, for $[x]\in [A]$, using $\inf_{\lambda\in\R}\max\{t-\lambda,s+\lambda\}=(t+s)/2$, we have 
\begin{eqnarray}\label{eq:var_is_quotient}
2\|[x]\|_q & := & 2\inf_{\mu\in\mathbb{R}}\|x -\mu e\| \nonumber \\
 & = & 2\inf_{\mu\in\mathbb{R}} \max_{\lambda \in \sigma(x)}|\lambda -\mu| \nonumber \\
  & = & 2\inf_{\mu\in\mathbb{R}} \max\big\{{\textstyle\max_{\lambda \in \sigma(x)}}(\lambda -\mu), {\textstyle\max_{\lambda \in \sigma(x)}}(-\lambda +\mu)\big\}  \\
  & = &  \max\sigma(x) + \max-\sigma(x)=\max\sigma(x)-\min\sigma(x) \nonumber \\
& = & \|[x]\|_v. \nonumber
\end{eqnarray}

A JB-algebra $A$ induces an algebra structure on $\ol{A}_+^\circ$ by $\ol{x}\circ\ol{y}:=\ol{x\circ y}$, which is well-defined. We can also define $\ol{x}^{\alpha} := \ol{x^\alpha}$ for $\alpha\in\mathbb{R}$. Note that the map $\mathrm{log}\colon A_+^\circ\to A$ given by $x\mapsto \log(x)$ is a bijection, whose inverse $\mathrm{exp}$ is given by $x\mapsto \exp(x)$. Furthermore, as $\log(\lambda x ) =\log(x)+ \log(\lambda) e$ for all $x\in A_+^\circ$ and $\lambda>0$, the map $\mathrm{log}$ induces a bijection from $\ol{A}_+^\circ$ onto $[A]$ given by $\log\ol{x} = [\log x]$. Its inverse $\mathrm{exp}\colon [A]\to \ol{A}_+^\circ$ is given by $\exp([x]) = \ol{\exp(x)}$ for $[x]\in [A]$.

The \emph{Jordan triple product} $\{ \cdot, \cdot, \cdot \}$ is defined as
\[ \{x,y,z  \} := (x \circ y) \circ z + (z \circ y) \circ x - (x \circ z) \circ y, \]
for $x,y,z \in A$. For $y \in A$, the linear map $U_y \colon A \to A$ defined by $U_y x := \{y,x,y\}$ is called the \emph{quadratic representation} of $y$, and if $y \in A_+^\circ$, then $U_y \in \Aut(A_+)$. Note that $U_y$ induces a map $U_{\overline{y}} \colon \ol{A}_+^\circ \to \ol{A}_+^\circ$ for $y \in A_+^\circ$.

A \emph{JBW-algebra} is the Jordan analogue of a von Neumann algebra: it is a JB-algebra which is monotone complete and has a separating set of normal states, or equivalently, a JB-algebra that is a dual space. For a JBW-algebra $M$ we will denote its unique predual by $M_*$. If $A$ is a JB-algebra, one can extend the product in $A$ to $A^{**}$ turning $A^{**}$ into a JBW-algebra, see \cite[Corollary~2.50]{AS2}. In JBW-algebras the spectral theorem holds, which implies in particular that the linear span of projections is norm dense. If $p$ is a projection, then the orthogonal complement $e-p$ will be denoted by $p^\perp$. In the sequel we will denote the set of projections of a JBW-algebra by $\P(M)$. Every JBW-algebra decomposes into a direct sum of type I, II, and III JBW-algebras, and a JBW-algebra with trivial center is called a \emph{factor}. 

A minimal nonzero projection in a JB(W)-algebra is called an \emph{atom}. A JBW-algebra in which every nonzero projection dominates an atom is called an \emph{atomic} JBW-algebra. Every projection in an atomic JBW-algebra can be written as a supremum of atoms, and the supremum of all atoms yields a central projection. This central projection $z$ decomposes the JBW-algebra $M$ into a direct sum of subalgebras $M=zM\oplus z^\perp M$ where $zM$ is atomic and $z^\perp M$ is purely nonatomic. In the sequel we denote the atomic part $zM$ of a JBW-algebra $M$ by $M_a$. The composition of the canonical embedding $A \hookrightarrow A^{**}$ with the multiplication by $z$ is an isometric JB-algebra embedding of $A$ into $(A^{**})_a$. This is a standard result for $C$*-algebras, see e.g.\ \cite[Preliminaries]{A}, and the proof for JB-algebras is the same; see \cite[Proposition~1]{FR} for a proof for JB*-triples, which are a generalization of JB-algebras.

The next lemma characterizes a JB-algebra that is either $\R^2$ or a spin factor.

\begin{lemma}\label{lem:equivalences_R2_and_SF}
Let $A$ be a unital JB-algebra of dimension at least two and consider the following statements.
\begin{itemize}
\item[(1)] The inversion map $\iota\colon x\mapsto x^{-1}$ on $A_+^\circ$ is linear up to scalar multiplication.
\item[(2)] $\max_{x\in A}\#\sigma(x)=2$.
\item[(3)] $A$ is either $\R^2$ or a spin factor.
\item[(4)] There exists an atom $u\in A$ such that $u^\perp$ is also an atom.
\item[(5)] There are two atoms $u \not= v \in A$ such that $u \vee v = e$.
\end{itemize}

Then $(1)\Leftrightarrow(2)\Leftrightarrow(3)\Rightarrow(4) \Rightarrow (5)$. If $A$ is a JBW-algebra, then also $(5) \Rightarrow (4)\Rightarrow(3)$, so that they are all equivalent.
\end{lemma}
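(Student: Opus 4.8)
\section*{Proof proposal}

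The plan is to establish the cycle $(3)\Rightarrow(1)\Rightarrow(2)\Rightarrow(3)$ together with the two short steps $(3)\Rightarrow(4)\Rightarrow(5)$, and then to treat the extra JBW-implications $(5)\Rightarrow(4)\Rightarrow(3)$ separately. For $(3)\Rightarrow(1)$ I would just compute the inverse: in a spin factor $(\alpha,x)^{-1}=(\alpha^2-\langle x,x\rangle)^{-1}(\alpha,-x)$, and in $\R^2$ one has $(a,b)^{-1}=(ab)^{-1}(b,a)$, so in both cases $\iota$ is a pointwise positive scalar multiple of a fixed linear map. For $(1)\Rightarrow(2)$ I would write $x^{-1}=\lambda(x)Lx$ with $L$ linear and, for a fixed $g\in A$, expand both sides of $(e+tg)^{-1}=\lambda(e+tg)\,L(e+tg)$ as power series in $t$ near $0$, using the Neumann series in the associative algebra $\mathrm{JB}(g,e)$; comparing the coefficients of $t^0,t^1,t^2$ forces $g^2\in\Sp\{g,e\}$ for every $g\in A$, which is precisely the statement $\#\sigma(g)\le 2$ for all $g$, i.e.\ $(2)$. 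The implication $(3)\Rightarrow(4)$ (the atoms of $\R^2$ and of a spin factor occur in complementary pairs $\tfrac12(e\pm s)$ with $\|s\|=1$) and $(4)\Rightarrow(5)$ (take $v=u^\perp$, which differs from $u$ and satisfies $u\vee v=u+u^\perp=e$) are immediate.

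The substance is $(2)\Rightarrow(3)$. I would first note that $(2)$ rules out three nonzero pairwise orthogonal projections (otherwise $p_1+2p_2+3p_3$ has at least three spectral values), whence every nontrivial projection is an atom whose complement is again an atom; such a pair $p,p^\perp$ exists because some element attains $\#\sigma=2$. Next, $(2)$ forces $U_pA=\R p$: for $y\in U_pA$ one has $\sigma_A(y)=\sigma_{U_pA}(y)\cup\{0\}$, so $\#\sigma_{U_pA}(y)\le 2$, and a two-valued spectrum would yield a nontrivial subprojection of $p$, contradicting minimality; hence every element of $U_pA$ is scalar. With $U_pA=\R p$ and $U_{p^\perp}A=\R p^\perp$, the Peirce decomposition reads $A=\R p\oplus\R p^\perp\oplus A_{1/2}$. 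Setting $V:=\R(p-p^\perp)\oplus A_{1/2}$, the Peirce rules give $(p-p^\perp)^2=e$ and $(p-p^\perp)\circ a=0$ for $a\in A_{1/2}$; the only nontrivial point is that $a\circ b\in\R e$ for $a,b\in A_{1/2}$. Writing $a^2=c_1 p+c_0 p^\perp$ and using $U_{a^2}=U_a^2$ with $U_a p=c_0 p^\perp$ and $U_a p^\perp=c_1 p$ yields $c_0c_1=c_1^2=c_0^2$, whence $c_0=c_1$ and $a^2\in\R e$; polarization then gives $a\circ b\in\R e$. Thus $V\circ V\subseteq\R e$, so $v\circ w=\langle v,w\rangle e$ defines a symmetric bilinear form on $V$, positive definite because $\|v\|^2=\|v^2\|=\langle v,v\rangle$; since $V$ is norm-closed in $A$ it is a real Hilbert space, and $A=\R e\oplus V$ is a spin factor when $A_{1/2}\ne 0$ and equals $\R^2$ when $A_{1/2}=0$.

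For the JBW case I would prove $(4)\Rightarrow(3)$ by the very same Peirce computation, the only change being that in a JBW-algebra an atom $u$ automatically satisfies $U_uM=\R u$ (see \cite{HO}), so that $(2)$ is not needed to obtain the one-dimensional diagonal Peirce spaces. For $(5)\Rightarrow(4)$ I would observe that distinct atoms $u\ne v$ satisfy $u\wedge v=0$ (a projection strictly below a minimal projection is $0$), and then invoke the parallelogram law for projections in a JBW-algebra, $u\vee v-v\sim u-u\wedge v=u$. Since equivalent projections are carried onto one another by a Jordan automorphism, $u\vee v-v$ is an atom; as $u\vee v=e$ this says $v^\perp=e-v$ is an atom, producing the complementary atom pair $v,v^\perp$ demanded by $(4)$.

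The main obstacle is the structural passage $(2)/(4)\Rightarrow(3)$, specifically the identity $a\circ b\in\R e$ on $A_{1/2}$ that converts the Peirce decomposition into a genuine spin product; ensuring that the resulting form is positive definite and complete (rather than degenerate) is exactly where the JB-axiom $\|v^2\|=\|v\|^2$ and completeness of $A$ are used. The JBW-only implication $(5)\Rightarrow(4)$ rests on the parallelogram law, which is the point where monotone completeness enters and which has no general JB-analogue, consistent with the asymmetry in the statement.
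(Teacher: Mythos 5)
Your overall architecture is sound and most of your steps are correct, but several of them take genuinely different routes from the paper. For $(1)\Rightarrow(2)$ the paper argues by contraposition: given $\#\sigma(x)\ge 3$ it uses Urysohn's lemma in $\mathrm{JB}(x,e)\cong C(\sigma(x))$ to build a function taking three prescribed values and derives an inconsistent linear system; your power-series argument, forcing $e,g,g^2$ into the at most two-dimensional subspace $\Sp\{Le,Lg\}$ and hence $g^2\in\Sp\{e,g\}$, is a clean and valid alternative. More substantially, for $(2)\Rightarrow(3)$ the paper does \emph{not} work inside $A$: it shows $p,p^\perp$ are atoms, lifts them to atoms of $A^{**}$ via $w^*$-continuity of quadratic representations, invokes \cite[Lemma~5.53]{AS2} to conclude $A^{**}$ is $\R^2$ or a spin factor, and then uses reflexivity to descend to $A$. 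Your intrinsic Peirce computation (one-dimensional diagonal Peirce spaces, $U_{a^2}=U_a^2$ giving $c_0=c_1$, hence $a^2\in\R e$ and by polarization a spin inner product on $V=\R(p-p^\perp)\oplus A_{1/2}$) is correct --- I checked the identities $U_ap=c_0p^\perp$, $U_ap^\perp=c_1p$, $U_{a^2}p=c_1^2p$ --- and in effect re-proves the cited classification lemma, at the cost of more work but avoiding the bidual entirely. Likewise your JBW implication $(4)\Rightarrow(3)$ reuses this computation (legitimately, since atoms in JBW-algebras have one-dimensional corners), whereas the paper argues via the central cover dichotomy $c(u)\in\{u,e\}$, type $I_2$ theory, and triviality of the center. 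Both routes work; yours is more self-contained, the paper's is shorter given the citations.

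The one genuine weak point is your proof of $(5)\Rightarrow(4)$ in the JBW case. You invoke ``the parallelogram law for projections in a JBW-algebra'' to get $u\vee v - v \sim u - u\wedge v = u$, and then need the equivalence to be implemented by a Jordan automorphism so that atoms are preserved. In a von Neumann algebra the parallelogram law is Kaplansky's theorem, and its proof goes through the partial isometry in the polar decomposition of $(1-q)p$ --- a tool with no Jordan analogue. The Jordan notion of equivalence is exchange by (finitely many) symmetries, which is exactly what your ``carried onto one another by a Jordan automorphism'' step requires, and the parallelogram law in \emph{that} form is not a stated result in the standard JBW references (Hanche-Olsen--St{\o}rmer or Alfsen--Shultz); you offer no proof of it, so as written this step rests on an unjustified claim. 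The repair is easy and is precisely what the paper does: the statement you actually need is only the atom case of the parallelogram law, namely the covering property of the projection lattice of a JBW-algebra, \cite[Lemma~3.50]{AS2}: for an atom $u$ and a projection $p$, either $u\le p$ or $(p\vee u)-p$ is an atom. Applying this with the atom $u$ and the projection $v$ (noting $u\le v$ is impossible for distinct atoms) gives at once that $u\vee v - v = e - v = v^\perp$ is an atom, which is $(4)$. With that substitution your proof is complete.
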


\begin{proof}
$(1)\Rightarrow (2)$. If there is an $x\in A$ such that $\#\sigma(x)\ge 3$, then for JB$(x,e)\cong C(\sigma(x))$ the interior of the cone $C(\sigma(x))_+^\circ$ is invariant under $\iota$. Let $\lambda_1,\lambda_2,\lambda_3\in\sigma(x)$ be distinct. By Urysohn's lemma there is a function $f\in C(\sigma(x))_+^\circ$ such that $f(\lambda_i)=i$ for $i=1,2,3$. It follows that 
\[
\iota(f+\mathbf{1})=\alpha \iota(f)+\beta\iota(\mathbf{1})=\alpha f^{-1}+\beta\mathbf{1}\quad(\alpha,\beta>0)
\]
yields the linear equations $\alpha+\beta=\frac{1}{2}$, $\frac{1}{2}\alpha+\beta=\frac{1}{3}$, and $\frac{1}{3}\alpha+\beta=\frac{1}{4}$, which has no common solution. This implies that $\iota$ can not be linear up to scalar multiplication.

$(2)\Rightarrow (3)$. Suppose that $\max_{x\in A}\#\sigma(x)=2$. Let $y \in A$ be such that $\#\sigma(y)=2$, then $y = \lambda p + \mu p^\perp$ for some nontrivial projection $p$ and $\lambda \not= \mu$. Note that the spectral bound on the elements implies that $\dim(U_p(A)) = \dim(U_{p^\perp}(A)) = 1$, which shows in particular that $p$ and $p^\perp$ are atoms. Next, we will show that $p$ and $p^\perp$ are also atoms in $A^{**}$. Denoting the quadratic representation on $A^{**}$ by $V$, \cite[Proposition~2.4]{AS2} shows that $V_p$ is $w$*-continuous, and since $U_p^{**}$ is the unique $w$*-continuous extension of $U_p$, $U_p^{**}=V_p$. Since $U_p(A)$ is one-dimensional, so is $U_p^{**}(A^{**}) = V_p(A^{**})$ so $p$ is an atom in $A^{**}$. Similarly, $p^\perp$ is an atom in $A^{**}$. Now \cite[Lemma~5.53]{AS2} implies that $V_{p\lor p^\perp}(A^{**}) = A^{**}$ either equals $\R^2$ or a spin factor. Since $A^{**}$ is reflexive, $A$ is reflexive, and so $A$ is either $\R^2$ or a spin factor. 

$(3)\Rightarrow (1)$. If $A$ is $\R^2$, then $\iota(\lambda,\mu)=(\lambda \mu)^{-1}(\mu,\lambda)$ on $A_+^\circ$ which is linear up to scalar multiplication, and if $A$ is a spin factor, then $\iota(x,\lambda)=(\lambda^2-\langle x,x\rangle)^{-1}(-x,\lambda)$ on $A_+^\circ$ which is also linear up to scalar multiplication.

$(3)\Rightarrow(4)$. If $A$ is either $\R^2$ or a spin factor, then every nontrivial projection $u$ is an atom, hence so is $u^\perp$.

$(4) \Rightarrow (5)$. Trivial.

Suppose that $A$ is a JBW-algebra. To show $(5) \Rightarrow (4)$, \cite[Lemma~3.50]{AS2} states that either $v \leq u$ or that $u \lor v - u$ is an atom. The first is impossible since $u$ is an atom and $v \not= u$, and the second states that $u^\perp$ is an atom, as required.

$(4) \Rightarrow (3)$. If $u \in A$ is an atom such that $u^\perp$ is also an atom, then both $u$ and $u^\perp$ must be maximal as well. Therefore, the central cover $c(u)$ of $u$ satisfies $c(u) = u$ or $c(u) = e$. If $c(u)=u$, then
\[
A=U_u(A)\oplus U_{u^\perp}(A)=\R u \oplus \R u^\perp \cong \R^2
\] 
by \cite[Lemma~3.29]{AS2}. On the other hand if $c(u)=e$, then by the maximality of $u^\perp$ it follows that $c(u^\perp)=e$ as well, as otherwise $c(u^\perp)=u^\perp$ would imply that $u$ is a central projection. Hence, $A$ is of type $I_2$ by \cite[Definition~5.3.3]{HO}. If $z\in A$ is a central projection, then 
\[
z=U_u(z)+U_{u^\perp}(z)=\alpha u+\beta u^\perp=(\alpha-\beta)u+\beta e
\]
by \cite[Proposition~1.47]{AS2}. Since $u$ is not central, $\alpha=\beta$ and $z=\alpha e$, hence the center is trivial and so $A$ is a type $I_2$ factor, a spin factor. 
\end{proof}

\subsection{Orthogonality in the predual of a JBW-algebra}

For functionals in the predual $M_*$ of a JBW-algebra $M$ the decomposition from Lemma~\ref{lem:decomposition_functionals} is unique by \cite[Proposition~2.58]{AS2}. We will use the notation $\varphi=\varphi^+-\varphi^-$ for this unique decomposition.

\begin{definition}
Let $M$ be a JBW-algebra and let $\varphi\in M_*$ be positive. Then the smallest projection $p\in M$ such that $\norm{\varphi}=\varphi(p)$ is the {\em support projection} of $\varphi$ and is denoted by $s(\varphi)$.
\end{definition}

We can derive the following properties for the support projections of positive functionals.

\begin{lemma}\label{lem:support_projections}
Let $M$ be a JBW-algebra and let $\varphi,\psi\in M_*$ be positive. Then $s(\varphi)\le s(\varphi+\psi)$ and $s(\varphi+\psi)=s(\varphi)\vee s(\psi)$.
\end{lemma}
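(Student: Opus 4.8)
The plan is to work throughout with the elementary identity $\|\chi\| = \chi(e)$, valid for any positive $\chi \in M_*$ (since $-e \le x \le e$ whenever $\|x\| \le 1$ forces $|\chi(x)| \le \chi(e)$, with equality at $x=e$), together with its immediate consequence that for a projection $p$ one has $\chi(p) = \|\chi\|$ if and only if $\chi(p^\perp) = 0$. With this reformulation, $s(\chi)$ is simply the smallest projection $p$ with $\chi(p^\perp)=0$. I will also use repeatedly that a positive functional is monotone on the order, and that orthocomplementation on $\P(M)$ reverses the order, i.e.\ $a \le b$ implies $b^\perp \le a^\perp$.

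First I would prove $s(\varphi) \le s(\varphi + \psi)$. Writing $r := s(\varphi+\psi)$, the defining property gives $(\varphi+\psi)(r) = \|\varphi+\psi\| = (\varphi+\psi)(e)$, hence $(\varphi+\psi)(r^\perp) = 0$. Since $\varphi$ and $\psi$ are positive and $r^\perp \ge 0$, both $\varphi(r^\perp)$ and $\psi(r^\perp)$ are nonnegative and sum to $0$, so each vanishes. In particular $\varphi(r^\perp) = 0$, which means $r$ is a projection with $\varphi(r) = \|\varphi\|$; minimality of $s(\varphi)$ then forces $s(\varphi) \le r$. This is exactly the first claim of the lemma, and by symmetry $s(\psi) \le r$ as well, so $s(\varphi) \vee s(\psi) \le s(\varphi+\psi)$.

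For the reverse inequality $s(\varphi+\psi) \le s(\varphi)\vee s(\psi)$, set $p := s(\varphi)$ and $q := s(\psi)$. Since $p \le p \vee q$ and $q \le p \vee q$, order-reversal of the complement gives $(p\vee q)^\perp \le p^\perp$ and $(p\vee q)^\perp \le q^\perp$. Using monotonicity of $\varphi$ and $\psi$ together with $\varphi(p^\perp)=0$ and $\psi(q^\perp)=0$, we obtain $\varphi((p\vee q)^\perp) = 0$ and $\psi((p \vee q)^\perp) = 0$, and therefore $(\varphi+\psi)((p\vee q)^\perp)=0$. Thus $p \vee q$ is a projection with $(\varphi+\psi)(p\vee q) = \|\varphi+\psi\|$, and minimality of $s(\varphi+\psi)$ yields $s(\varphi+\psi) \le p \vee q$. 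Combining the two inequalities gives the stated equality.

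There is no serious obstacle here: the argument is a routine consequence of positivity and the lattice structure of $\P(M)$. The only points that warrant careful statement are the equivalence $\chi(p)=\|\chi\| \Leftrightarrow \chi(p^\perp)=0$ (so that the support projection is characterized by annihilation of the complementary projection) and the facts that the supremum $p \vee q$ exists in $\P(M)$ and that orthocomplementation is order-reversing, both of which are standard for the projection lattice of a JBW-algebra.
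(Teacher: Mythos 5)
Your proof is correct and follows essentially the same route as the paper's: both arguments reduce to positivity plus $\|\chi\|=\chi(e)$, forcing $\varphi$ and $\psi$ to attain their norms on $s(\varphi+\psi)$ for the first claim, and showing $\varphi+\psi$ attains its norm on $s(\varphi)\vee s(\psi)$ (via monotonicity) for the second. Your reformulation in terms of annihilating the complementary projection is just the dual phrasing of the paper's chain of norm equalities, so there is nothing substantive to distinguish the two.
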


\begin{proof}
The first statement follows from
\[
\norm{\varphi+\psi}=(\varphi+\psi)(s(\varphi+\psi))=\varphi(s(\varphi+\psi))+\psi(s(\varphi+\psi))
\le\varphi(e)+\psi(e)=\norm{\varphi}+\norm{\psi}=\norm{\varphi+\psi},
\]
since this implies that $\norm{\varphi}=\varphi(s(\varphi+\psi))$ and therefore $s(\varphi)\le s(\varphi+\psi)$. Note that we have also shown $s(\varphi)\vee s(\psi)\le s(\varphi+\psi)$ and the second statement follows from
\[
(\varphi+\psi)(s(\varphi)\vee s(\psi))=\varphi(s(\varphi)\vee s(\psi))+\psi(s(\varphi)\vee s(\psi))=\norm{\varphi}+\norm{\psi}=\norm{\varphi+\psi},
\]
as this implies $s(\varphi+\psi)\le s(\varphi)\vee s(\psi)$.
\end{proof}

\begin{definition}
Let $M$ be a JBW-algebra and let $\varphi,\psi\in M_*$ be positive. Then $\varphi$ and $\psi$ are {\em orthogonal} if $\norm{\varphi-\psi}=\norm{\varphi}+\norm{\psi}$. In this case we write $\varphi\perp\psi$.
\end{definition}

By \cite[Lemma 5.4]{AS2} two positive functionals $\varphi$ and $\psi$ are orthogonal if and only if $s(\varphi)$ and $s(\psi)$ are orthogonal.

\begin{lemma}\label{lem:orthogonal_functionals1}
Let $M$ be a JBW-algebra and let $\varphi,\psi,\rho\in M_*$ be positive. Then $\varphi\perp\psi$ and $\varphi\perp\rho$ if and only if $\varphi\perp(\psi+\rho)$.
\end{lemma}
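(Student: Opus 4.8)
The plan is to reduce the statement about functionals to an order-theoretic statement about their support projections, using the two facts already established. By \cite[Lemma~5.4]{AS2}, positive functionals are orthogonal precisely when their support projections are orthogonal, and by Lemma~\ref{lem:support_projections} we have $s(\psi+\rho)=s(\psi)\vee s(\rho)$. Applying the first fact to each of the three orthogonality relations and substituting the identity from the second, the claim that $\varphi\perp\psi$ and $\varphi\perp\rho$ hold if and only if $\varphi\perp(\psi+\rho)$ becomes the equivalence: $s(\varphi)\perp s(\psi)$ and $s(\varphi)\perp s(\rho)$ if and only if $s(\varphi)\perp(s(\psi)\vee s(\rho))$.

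It then remains to prove this purely lattice-theoretic statement for the projections $p:=s(\varphi)$, $q:=s(\psi)$, and $r:=s(\rho)$ in $M$, namely that $p\perp q$ and $p\perp r$ hold if and only if $p\perp(q\vee r)$. Here I would use that orthogonality of projections, $p\perp q$, is equivalent to $q\le p^\perp$, where $p^\perp=e-p$. For the forward implication, $p\perp q$ and $p\perp r$ give $q\le p^\perp$ and $r\le p^\perp$; since $q\vee r$ is by definition the least upper bound of $\{q,r\}$ in the projection lattice, it follows that $q\vee r\le p^\perp$, that is, $p\perp(q\vee r)$. For the converse, $p\perp(q\vee r)$ gives $q\vee r\le p^\perp$, and combining this with $q\le q\vee r$ and $r\le q\vee r$ yields $q\le p^\perp$ and $r\le p^\perp$, i.e.\ $p\perp q$ and $p\perp r$.

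I expect no serious obstacle here: the only points requiring care are the correct invocation of the orthogonality characterization \cite[Lemma~5.4]{AS2} (so that $\varphi\perp\psi$ is equivalent to $s(\varphi)\perp s(\psi)$) together with the supremum identity $s(\psi+\rho)=s(\psi)\vee s(\rho)$ from Lemma~\ref{lem:support_projections}. Once these translate the problem into the projection lattice, the equivalence is immediate from the characterization of the supremum as the least upper bound.
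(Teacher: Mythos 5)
Your proposal is correct and follows essentially the same route as the paper's proof: both translate the statement into the projection lattice via \cite[Lemma~5.4]{AS2}, apply the identity $s(\psi+\rho)=s(\varphi)\vee s(\rho)$ from Lemma~\ref{lem:support_projections}, and conclude using the characterization $p\perp q \Leftrightarrow q\le p^\perp$ together with the least upper bound property of the supremum. The only cosmetic difference is that the paper phrases the lattice argument directly in terms of the supports $s(\varphi), s(\psi), s(\rho)$ rather than isolating it as a separate lattice-theoretic claim.
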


\begin{proof}
Suppose $\varphi\perp\psi$ and $\varphi\perp\rho$. Since $s(\psi)\le s(\varphi)^\perp$ and $s(\rho)\le s(\varphi)^\perp$, we have 
\[
s(\psi+\rho)=s(\psi)\vee s(\rho)\le s(\varphi)^\perp 
\]
by Lemma~\ref{lem:support_projections}, so $s(\varphi)\perp s(\psi+\rho)$ and hence $\varphi\perp(\psi+\rho)$. Conversely, if $\varphi\perp(\psi+\rho)$, then 
\[s(\varphi)\le s(\psi+\rho)^\perp\le s(\psi)^\perp, s(\rho)^\perp
\] 
again by Lemma~\ref{lem:support_projections}, so $s(\varphi)\perp s(\psi)$ and $s(\varphi)\perp s(\rho)$. Hence $\varphi\perp\psi$ and $\varphi\perp\rho$.
\end{proof}

\subsection{Duality and extreme points}

For a Banach space $X$, a subspace $U\subseteq X^*$ is called {\em norming} if $\norm{x} = \sup_{x^* \in B_U}|x^*(x)|$ for all $x\in X$. So, in particular, the map $x\mapsto\varphi_x$ from $X\to U^*$ where $\varphi_x(x^*):=x^*(x)$ is an isometric embedding. Note that $U^* \cong X^{**} / U^\perp$ (where $U^\perp := \{x^{**} \in X^{**} \colon x^{**}(U) = 0 \}$), and under this identification, $x \mapsto \phi_x$ is the composition of the natural embedding $X \hookrightarrow X^{**}$ with the quotient map $X^{**} \to X^{**}/ U^\perp$.

If $\ext(B_{X^*}) \subseteq U$, then $\co \ext(B_{X^*}) \subseteq B_U$ and so $U$ is norming by the Krein-Milman theorem. Therefore $\ol{\Sp}(\ext(B_{X^*})$, the norm closed linear span of $\ext(B_{X^*})$, is a straightforward norming example and we will denote it in the sequel by $X'$. Hence there is a canonical isometric embedding $X\hookrightarrow X'^*$. If $X^*$ is strictly convex, then $X' = X^*$, but in general this is not the case, see Example~\ref{ex:C[0,1]}. For a bounded operator $T\colon X\to Y$ we define $T'\colon Y'\to X^*$ by $T'y'(x):=y'(Tx)$ (so $T' = T^*|_{Y'}$).

\begin{remark}
Note that it is not true in general that $T'$ maps $Y'$ into $X'$. For example, consider $T \colon X \to \R$ represented by a functional $x^*\in X^*\setminus X'$. Then $T^*1=x^*\notin X'$.   
\end{remark}

However, the following lemma shows that if $T$ is an isometric isomorphism, the adjoint does satisfy this property.

\begin{lemma}\label{lem:X'}
Let $X$ and $Y$ be Banach spaces. If $T\colon X\to Y$ is an isometric isomorphism, then $T'\colon Y'\to X'$ is an isometric isomorphism.
\end{lemma}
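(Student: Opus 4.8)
The plan is to realize $T'$ as the restriction to $Y'$ of the full adjoint $T^*\colon Y^*\to X^*$ and to show that this adjoint carries $Y'$ isometrically onto $X'$. First I would record that since $T$ is an isometric isomorphism, so is $T^*$: it is bijective because $(T^*)^{-1}=(T^{-1})^*$, and it is isometric because, using that $T$ maps $B_X$ onto $B_Y$,
\[
\norm{T^*y^*}=\sup_{\norm{x}\le 1}|y^*(Tx)|=\sup_{\norm{y}\le 1}|y^*(y)|=\norm{y^*}
\]
for every $y^*\in Y^*$. In particular $T^*$ restricts to a linear bijection of the dual balls $B_{Y^*}\to B_{X^*}$, whose inverse $(T^{-1})^*$ is of the same type.

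The crucial step is then to transfer extreme points. Since $T^*$ restricts to a linear bijection between the convex sets $B_{Y^*}$ and $B_{X^*}$ whose inverse is again such a bijection, a point is extreme in $B_{Y^*}$ if and only if its image is extreme in $B_{X^*}$. Hence
\[
T^*\big(\ext(B_{Y^*})\big)=\ext(B_{X^*}),
\]
with equality, not merely inclusion, in both directions. Because $T^*$ is a linear homeomorphism it maps the closed linear span of any set onto the closed linear span of the image; applying this to $\ext(B_{Y^*})$ yields
\[
T^*(Y')=T^*\big(\ol{\Sp}(\ext(B_{Y^*}))\big)=\ol{\Sp}(\ext(B_{X^*}))=X'.
\]
Thus $T'=T^*|_{Y'}$ is well defined as a map into $X'$, is surjective onto $X'$, and is isometric as a restriction of the isometry $T^*$; hence it is an isometric isomorphism $Y'\to X'$.

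The one point requiring care — and the reason the preceding remark's counterexample does not contradict this — is the equality $T^*(\ext(B_{Y^*}))=\ext(B_{X^*})$. It is precisely the \emph{bijectivity} of $T^*$ on the balls that guarantees both that extreme points are not destroyed and that no new extreme points are created, so that the closed spans coincide exactly rather than only up to one inclusion; for a general bounded $T$ the adjoint need not even map $Y'$ into $X'$.
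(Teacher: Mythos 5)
Your proof is correct and takes essentially the same route as the paper's: both realize $T'$ as the restriction of the isometric isomorphism $T^*$, use that $T^*(\ext(B_{Y^*}))=\ext(B_{X^*})$, and conclude $T^*(Y')=X'$, hence surjectivity of $T'$. You merely spell out the routine details the paper leaves implicit (that $T^*$ is an isometric isomorphism, and that a linear homeomorphism carries closed linear spans onto closed linear spans).
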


\begin{proof}
The operator $T'$ is an isometry as it is the restriction of the isometry $T^*$. Since $T^*$ is an isometric isomorphism, $T^*(\mathrm{ext}(B_{Y^*}))=\mathrm{ext}(B_{X^*})$ and so $T^*(Y')=X'$ and $T'$ is surjective.
\end{proof}

By the above lemma, every isometric isomorphism $T\colon X\to Y$ extends to an isometric isomorphism $T'^*\colon X'^*\to Y'^*$.

\begin{remark}
Let $U\subseteq X$ be a closed subspace. Then $(X/U)^*=U^\perp\subseteq X^*$, but it is not true in general that the inclusion $(X/U)'\subseteq X'$ holds. Indeed, let $x^*\in X^*\setminus X'$ and $U:=\ker x^*$. Then $(X/U)'= (\ker x^*)^\perp = \Sp(x^*)$ has trivial intersection with $X'$.    However, in the special case where $A$ is a unital JB-algebra and $U = \Sp(e)$, it will be shown in \eqref{e:dual_of_quotient_in_dual} that $[A] ' = (A/\Sp(e))' \subseteq A'$.
\end{remark}

For an order unit space $A$ we know that $A'$ is the closed linear span of the pure states by \eqref{eq:ball_base_norm_space}. Moreover, if $A$ is a JB-algebra then it turns out that $A'$ is the predual of the atomic part of $A^{**}$.

\begin{lemma}\label{lem:predual_of_atomic_part}
Let $M$ be a JBW-algebra with normal state space $K$. Then $\ol{\Sp}\,{\mathrm{ext}(K)}^*\cong M_a$.
\end{lemma}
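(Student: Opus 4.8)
The plan is to realize $N:=\ol{\Sp}\,\mathrm{ext}(K)$, viewed as a norm-closed subspace of $M_*$, as the predual of $M_a$, after which the canonical duality $(M_*)^*=M$ immediately yields $N^*\cong M_a$. Recall from the preliminaries that $M_a=zM$, where $z$ is the central supremum of all atoms, and that $M=zM\oplus z^\perp M$ with $zM$ atomic. The whole argument is organized around computing the annihilator $N^\perp=\{a\in M : \varphi(a)=0\text{ for all }\varphi\in\mathrm{ext}(K)\}$: I claim $N^\perp=z^\perp M$, and then the bipolar theorem together with the standard duality $N^*\cong(M_*)^*/N^\perp$ for norm-closed subspaces closes the argument.

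First I would recall the correspondence between $\mathrm{ext}(K)$ and the atoms of $M$ (citing \cite{AS2}): every extreme point of $K$ is a normal pure state whose support projection is an atom, and conversely each atom $p$ determines a normal pure state $\psi_p$ characterized by $U_p x=\psi_p(x)p$, using that $U_p M=\R p$. Each such $\psi_p$ satisfies $\psi_p=\psi_p\circ U_p$, and its support atom lies below $z$. For the inclusion $z^\perp M\subseteq N^\perp$: given $a\in z^\perp M$ and $\varphi\in\mathrm{ext}(K)$ with support atom $p\le z$, centrality of $z$ gives $p\circ a=0$ and hence $U_p a=0$, so $\varphi(a)=\varphi(U_p a)=0$.

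The crux is the reverse inclusion $N^\perp\subseteq z^\perp M$, i.e.\ that the normal pure states separate the points of the atomic part. Given $a\in N^\perp$, set $b:=za\in M_a$. Using $p\le z$ central one checks $\psi_p(a)=\psi_p(za)=\psi_p(b)$, so $\psi_p(b)=0$ for every atom $p$. To upgrade this to $b=0$ I would argue spectrally: if $b\neq0$, after possibly replacing $b$ by $-b$ I may assume $\mu:=\max\sigma(b)>0$, and by the spectral theorem in the JBW-algebra $M_a$ I take a nonzero spectral projection $q=\chi_{[\mu-\eps,\mu]}(b)$ satisfying $U_q b\ge(\mu-\eps)q$ for small $\eps>0$. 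Atomicity of $M_a$ supplies an atom $p\le q$, and since $s(\psi_p)=p\le q$ we have $\psi_p=\psi_p\circ U_q$ and $\psi_p(q)=1$, whence $\psi_p(b)=\psi_p(U_q b)\ge(\mu-\eps)\psi_p(q)=\mu-\eps>0$, contradicting $\psi_p(b)=0$. Therefore $b=za=0$, that is $a\in z^\perp M$. This separation/spectral step is the main obstacle; everything else is routine.

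Combining the two inclusions gives $N^\perp=z^\perp M$. Since $N$ is norm-closed, the bipolar theorem (Hahn--Banach) yields $N={}^\perp(N^\perp)=(z^\perp M)_\perp$, which is exactly the predual $(zM)_*=(M_a)_*$ of $M_a$ cut out inside $M_*$ by the central projection $z$. Consequently $N^*=\big((M_a)_*\big)^*=M_a$, and under this identification the duality is the canonical one, so the isomorphism is isometric and normal, i.e.\ an isomorphism of JBW-algebras. The only genuinely nontrivial ingredient is the separation of the atomic part by normal pure states established above; the passage from $N^\perp=z^\perp M$ to $N^*\cong M_a$ is standard duality bookkeeping.
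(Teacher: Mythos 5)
Your proposal is correct, and its overall skeleton matches the paper's: identify the annihilator of $N:=\ol{\Sp}\,\ext(K)$ inside $M=(M_*)^*$, show it equals $z^\perp M$, and conclude by duality. The difference is in how the crucial step is justified. The paper proves $N^\perp=z^\perp M$ by showing $\varphi(zx)=\varphi(x)$ for pure states $\varphi$ and then invoking the norming chain $\norm{zx}=\sup_{\varphi\in K}|\varphi(zx)|=\sup_{\varphi\in\ext(K)}|\varphi(zx)|$; the second equality is asserted without argument and implicitly rests on the decomposition of normal states of an atomic JBW-algebra into $\sigma$-convex combinations of pure states, cf.\ \cite[Theorem~5.61]{AS2}. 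You instead prove the underlying separation property from scratch: if $b\in M_a$ is nonzero with $\mu=\max\sigma(b)>0$, the spectral projection $q=\chi_{[\mu-\eps,\mu]}(b)$ is nonzero, atomicity gives an atom $p\le q$, and positivity together with $\psi_p=\psi_p\circ U_q$ yields $\psi_p(b)\ge\mu-\eps>0$. This spectral-plus-atomicity argument is more elementary and self-contained at the crux, at the cost of being longer. The concluding bookkeeping also differs cosmetically: the paper uses $N^*\cong M/N^\perp\cong zM$, whereas you use the bipolar theorem to identify $N$ itself with $(z^\perp M)_\perp=(M_a)_*$ and then dualize; your route has the mild advantage of exhibiting the isomorphism as the canonical isometric identification rather than passing through a quotient. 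Both arguments are sound.
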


\begin{proof}
If $Y$ is a subspace of a Banach space $X$, then $Y^*\cong X^*/Y^\perp$. In our case we have 
\[
\ol{\Sp}\,{\mathrm{ext}(K)}^*\cong M/\ol{\Sp}\,{\mathrm{ext}(K)}^\perp. 
\]
By \cite[Lemma~3.42]{AS2} let $z$ be the central projection in $M$ such that $zM=M_a$. Then for any pure state $\varphi\in\mathrm{ext}(K)$ we have $s(\varphi)\le z$, so $U_{s(\varphi)}U_z=U_{s(\varphi)}$ and hence $U^*_zU^*_{s(\varphi)}=U^*_{s(\varphi)}$. It follows that
\[
\varphi(zx)=U_z^*\varphi(x)=U_z^*U_{s(\varphi)}^*\varphi(x)=U^*_{s(\varphi)}\varphi(x)=\varphi(x)
\]
for any $x\in M$ and so 
\[
\norm{zx}=\sup_{\varphi\in K}|\varphi(zx)|=\sup_{\varphi\in\mathrm{ext}(K)}|\varphi(zx)|=\sup_{\varphi\in\mathrm{ext}(K)}|\varphi(x)|.
\]
Hence $x\in z^\perp M$ if and only if $\norm{zx}=0$, which holds if and only if $\varphi(x)=0$ for all $\varphi\in\mathrm{ext}(K)$, which in turn is equivalent to $x\in\ol{\Sp}\,{\mathrm{ext}(K)}^\perp$. So $\ol{\Sp}\,{\mathrm{ext}(K)}^\perp=z^\perp M$ and finally since $M=zM\oplus z^\perp M$, we have $\ol{\Sp}\,{\mathrm{ext}(K)}^*\cong M_a$.
\end{proof}

\begin{corollary}\label{cor:A'*_atomic_bidual}
If $A$ is a JB-algebra, then $A'^* \cong (A^{**})_a$.
\end{corollary}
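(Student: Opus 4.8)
The plan is to deduce the corollary from Lemma~\ref{lem:predual_of_atomic_part} applied to the JBW-algebra $M := A^{**}$, whose atomic part is by definition the object $(A^{**})_a = M_a$ appearing on the right-hand side. For this I first identify the data in the lemma. The bidual $A^{**}$ is a JBW-algebra with predual $A^*$, so its normal state space is exactly the set of positive normalized functionals lying in $A^*$. Since the cone of $A^{**}$ is the weak*-closure of the cone of $A$, a functional in $A^*$ is positive on $A^{**}$ if and only if it is positive on $A$; hence the normal state space of $A^{**}$ coincides with the state space $K$ of $A$ itself, and the lemma reads $\overline{\Sp}(\ext(K))^* \cong (A^{**})_a$.

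The second and main step is to show that $A' = \overline{\Sp}(\ext(B_{A^*}))$ coincides with $\overline{\Sp}(\ext(K))$, for then the displayed isomorphism immediately yields $A'^* \cong (A^{**})_a$. To compare the two spans I use the identity $B_{A^*} = \co(K \cup -K)$ from \eqref{eq:ball_base_norm_space} to establish $\ext(B_{A^*}) = \ext(K) \cup -\ext(K)$. For the inclusion $\supseteq$ I argue that $K$ is a weak*-closed face of $B_{A^*}$: if a state $\varphi$ is written as a proper convex combination of two elements of $B_{A^*}$, then evaluating at $e$ and using $\norm{\psi} = \psi(e)$ for positive $\psi$ forces both summands to lie in $K$. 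Consequently every extreme point of $K$ remains extreme in $B_{A^*}$, and likewise for $-K$. For the inclusion $\subseteq$ I invoke Milman's partial converse to the Krein–Milman theorem: since $K \cup -K$ is weak*-compact and generates $B_{A^*}$, every extreme point of $B_{A^*}$ lies in $K \cup -K$, and being extreme it must in fact lie in $\ext(K) \cup -\ext(K)$. As the norm-closed linear span absorbs signs, this gives $A' = \overline{\Sp}(\ext(K))$, and combining the two steps completes the proof.

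The main obstacle is the careful bookkeeping in the second step: verifying that $K$ is genuinely a face of the dual ball, so that the pure states survive as extreme points of $B_{A^*}$ rather than being absorbed into interior convex combinations with points of $-K$. A secondary point requiring attention is the case where $A$ is nonunital, in which $A$ is not an order unit space and $K$ need not be weak*-compact, so that \eqref{eq:ball_base_norm_space} does not apply directly; here one passes to the unitization, where $A^{**}$ is a unital JBW-algebra and the face argument applies verbatim, and then transports the conclusion back along the canonical embedding.
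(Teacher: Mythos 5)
Your proposal is correct and takes essentially the same route as the paper: the corollary there is an immediate consequence of Lemma~\ref{lem:predual_of_atomic_part} applied to $M = A^{**}$ (whose normal state space is the state space $K$ of $A$), combined with the observation stated just before that lemma, via \eqref{eq:ball_base_norm_space}, that $A'$ is the closed linear span of the pure states. Your face-of-the-ball and Milman arguments merely fill in the details of that observation, which the paper leaves implicit.
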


Although not directly relevant for this paper, we can prove the same result for von Neumann algebras.

\begin{lemma}\label{lem:predual_atomic_von_Neumann_algebra}
Let $\mathcal{M}$ be a von Neumann algebra with normal state space $K$. Then $\overline{\Sp} \, \ext (B_{\mathcal{M}_*})^* \cong \mathcal{M}_a$.
\end{lemma}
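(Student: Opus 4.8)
The plan is to follow the proof of Lemma~\ref{lem:predual_of_atomic_part}, replacing the state space $K$ by the full unit ball and accounting for the non-self-adjoint functionals that now appear. Write $Y := \ol{\Sp}\,\ext(B_{\mathcal{M}_*}) \subseteq \mathcal{M}_*$. Since $\mathcal{M} = (\mathcal{M}_*)^*$, we have the isometric identification $Y^* \cong \mathcal{M}/Y^\perp$. Let $z$ be the central projection with $z\mathcal{M} = \mathcal{M}_a$ the atomic part and $z^\perp\mathcal{M}$ purely non-atomic, as recalled in Section~\ref{sec:2}. The whole argument then reduces to the single identity $Y = z\mathcal{M}_* = (\mathcal{M}_a)_*$, for then $Y^\perp = z^\perp\mathcal{M}$ and $Y^* \cong \mathcal{M}/z^\perp\mathcal{M} \cong z\mathcal{M} = \mathcal{M}_a$, exactly as in the JBW case.

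The key external input, replacing the support inequality $s(\varphi)\le z$ for pure states used in Lemma~\ref{lem:predual_of_atomic_part}, is the classical description of the extreme points of $B_{\mathcal{M}_*}$: a norm-one functional $\varphi$ is extreme if and only if the support projection $s(|\varphi|)$ of the absolute value in its polar decomposition $\varphi = v|\varphi|$ is a minimal projection. I would cite this and deduce that every extreme point lies in $z\mathcal{M}_*$. Indeed, a minimal projection is dominated by the supremum $z$ of all atoms, so the left and right support projections of $\varphi$ (the projections $vv^*$ and $v^*v$, which are equivalent to the minimal projection $s(|\varphi|)$ and hence themselves minimal) are dominated by the central $z$; since the left support property gives $\varphi(x) = \varphi(vv^*x)$ and $vv^*z^\perp = 0$, the functional $\varphi$ annihilates $z^\perp\mathcal{M}$, so $\varphi \in z\mathcal{M}_* = (\mathcal{M}_a)_*$. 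This yields $\ext(B_{\mathcal{M}_*}) \subseteq z\mathcal{M}_*$, hence $Y \subseteq z\mathcal{M}_*$.

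For the reverse inclusion I would use that the atomic von Neumann algebra $\mathcal{M}_a = z\mathcal{M}$ is a direct sum $\bigoplus_i B(H_i)$ of type~I factors, with predual the $\ell^1$-direct sum $\bigoplus_i T(H_i)$ of the trace-class spaces. Each rank-one, unit-trace functional supported in a single summand has a minimal support projection, so it is an extreme point of $B_{\mathcal{M}_*}$ by the characterization above, and the linear span of these functionals is norm dense in $(\mathcal{M}_a)_*$ because the finite-rank operators are trace-norm dense in each $T(H_i)$. Hence $z\mathcal{M}_* \subseteq \ol{\Sp}\,\ext(B_{\mathcal{M}_*}) = Y$, and together with the previous paragraph this gives $Y = z\mathcal{M}_* = (\mathcal{M}_a)_*$, completing the proof.

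The main obstacle is the extreme-point characterization itself. In the JBW setting one works only with the positive extreme points of $K$, the pure states, whose support projections are controlled directly; here the extreme points of the full ball $B_{\mathcal{M}_*}$ are genuinely non-self-adjoint and must be handled through the polar decomposition and the equivalence of the left and right support projections, which is precisely where showing that they are carried by $z$ requires care. A variant that isolates the essential point is to decompose $\mathcal{M}_* = z\mathcal{M}_* \oplus_{\ell^1} z^\perp\mathcal{M}_*$: the extreme points of the unit ball of an $\ell^1$-direct sum lie in a single summand, and $B_{(z^\perp\mathcal{M})_*}$ has no extreme points at all since $z^\perp\mathcal{M}$ is non-atomic, so all extreme points are again confined to $z\mathcal{M}_*$.
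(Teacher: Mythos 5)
Your proposal is correct, and its first half is essentially the paper's argument: the paper also works with the polar decomposition $\phi = v|\phi|$ and a characterization of $\ext(B_{\mathcal{M}_*})$ (it cites \cite[Theorem~2.1]{AR}: $\phi$ is extreme if and only if $|\phi|\in\ext K$, which for normal functionals is equivalent to your minimal-support formulation), and then proves precisely your first inclusion, showing $\phi(zx)=\phi(x)$ for every extreme $\phi$, i.e.\ that extreme points are carried by $z$. Where you genuinely diverge is the second half. The paper does not establish $Y:=\ol{\Sp}\,\ext(B_{\mathcal{M}_*})=(\mathcal{M}_a)_*$ at all; it simply says the rest of the argument is as in Lemma~\ref{lem:predual_of_atomic_part}, i.e.\ it computes $\norm{zx}=\sup_{\phi\in\ext(B_{\mathcal{M}_*})}|\phi(x)|$ (the extreme points are norming for the atomic part, which rests on decomposing normal functionals into $\ell^1$-convergent combinations of extreme ones, the analogue of \cite[Theorem~5.61]{AS2}), deduces $Y^\perp=z^\perp\mathcal{M}$, and concludes $Y^*\cong\mathcal{M}/z^\perp\mathcal{M}\cong\mathcal{M}_a$. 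You instead prove the stronger identity $Y=z\mathcal{M}_*$ directly, getting the reverse inclusion from the type~I structure theorem $\mathcal{M}_a\cong\bigoplus_i B(H_i)$, the identification of its predual as the $\ell^1$-sum of trace-class spaces, and trace-norm density of finite-rank operators. Both routes are sound: the paper's is shorter and uniform with the JBW case, while yours is more concrete but leans on heavier (if classical) structure theory. Your closing variant---$\mathcal{M}_*=z\mathcal{M}_*\oplus_{\ell^1}z^\perp\mathcal{M}_*$, extreme points of an $\ell^1$-sum lie in a single summand, and the ball of the predual of the non-atomic part has no extreme points---is arguably a cleaner way to obtain the forward inclusion than either support-projection computation.

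Two small repairs, neither a real gap. First, ``rank-one, unit-trace'' should read ``rank-one, of trace norm one'': a rank-one functional $x\mapsto\langle x\xi,\eta\rangle$ with trace $\langle\eta,\xi\rangle=1$ need not have norm one, and only norm-one functionals with minimal support are extreme; since the norm-one rank-one functionals still span a dense subspace, nothing is lost. Second, whether it is $vv^*$ or $v^*v$ that satisfies $\phi(x)=\phi(vv^*x)$ depends on the convention for the module action in the polar decomposition, but, as you yourself note, both projections are equivalent to $s(|\phi|)$ and hence minimal, so both lie under the central projection $z$ and the conclusion $\phi(z^\perp x)=0$ holds under either convention.
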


\begin{proof}
Let $\phi \in B_{\mathcal{M}_*}$, with polar decomposition $\phi = v |\phi|$. Then \cite[Theorem~2.1]{AR} states that $\phi \in \ext (B_{\mathcal{M}_*})$ if and only if $|\phi| \in \ext K$. So if 
$\phi \in \ext (B_{\mathcal{M}_*})$, then for $x \in M$, letting $z$ be the central projection in $\mathcal{M}$ such that $z \mathcal{M} = \mathcal{M}_a$,
\begin{align*}
\phi(zx) &= v |\phi|(zx) = |\phi|(zxv) = U_z^* |\phi|(xv) = U_z^* U_{s(|\phi|)}^* |\phi|(xv) \\
&= U_{s(|\phi|)}^* |\phi|(xv) = |\phi|(xv) = v |\phi|(x) = \phi(x).
\end{align*}
The rest of the argument is the same as in Lemma~\ref{lem:predual_of_atomic_part}.
\end{proof}

We conclude that $\mathcal{A}'^* \cong (\mathcal{A}^{**})_a$ for a $C$*-algebra $\mathcal{A}$. 

\begin{example}\label{ex:C[0,1]}
Let $\Omega$ be a compact Hausdorff space and consider $A=C(\Omega)$. For $s \in \Omega$ we denote by $\delta_s \in M(\Omega)$ the dirac measure in $s$. Then
\[
A'=\left\{\sum_{n=1}^\infty\lambda_n\delta_{s_n}\colon (\lambda_n)_{n\ge 1}\in\ell^1,\ s_n\in \Omega \right\}\cong \ell^1(\Omega), \quad A'^*\cong\ell^\infty(\Omega). 
\] 
The canonical embedding of $C(\Omega)$ into $\ell^\infty(\Omega)$ coincides with the embedding $A\hookrightarrow A'^*$.
\end{example}

\section{Geometry of the dual ball}\label{sec:3}

Our goal is to characterize the linear surjective variation norm isometries $S\colon [A] \to [B]$ for unital JB-algebras $A$ and $B$. To exclude trivial cases, from now on we make the following assumption.
\begin{assumption*}
All JB-algebras $A$ satisfy $\dim(A) \geq 2$.
\end{assumption*}

In \eqref{eq:var_is_quotient} we showed that the variation norm equals the quotient norm up to a factor of two, so we can consider the operator $S'$ between $[B]'$ and $[A]'$. To determine $[A]'$, we have to find the extreme points of the ball $2B_{e_A^\perp}$ of $[A]^*\cong e_A^\perp\subseteq A^*$, where $e_A^\perp := \{ \phi \in A^* \colon \phi(e) = 0 \}$. Since the double dual of a JB-algebra is a JBW-algebra, we adopt the more general setting of a JBW-algebra $M$ and we consider $e_{\perp} := \{\phi \in M_* \colon \phi(e) = 0\} \subseteq M_*$.
 
\subsection{The facial structure of $2B_{e_\perp}$}\label{sec:3.1}

We proceed to characterize the norm closed faces of $2B_{e_\perp}$, and consequently the extreme points since these correspond to the singleton faces. Recall that a subset $F$ of a convex set $C$ is a \emph{face} if it is convex and satisfies the property that if $tx+(1-t)y\in F$ for some $x,y\in C$, then $x,y\in F$. We call a face \emph{proper} if it is nonempty and not equal to $C$. 

Let $M$ be a JBW-algebra with normal state space $K$. For every norm closed face $F$ of $K$ we also have a support projection
\[
s(F):=\bigvee\{s(\varphi)\colon \varphi\in F\}
\]
and by \cite[Theorem~5.32]{AS2}, $F$ is of the form $F=F_p:=\{\varphi\in K\colon\varphi(p)=1\}$ with $p=s(F)$. Conversely, such a set $F_p$ defines a norm closed face of $K$. 

Consider $e_\perp\subseteq M_*$ with scaled unit ball $2B_{e_\perp}$. We will exploit the facial structure of the normal state space $K$ to determine the facial structure of $2B_{e_\perp}$. Since a normal functional $\varphi\in M_*$ satisfies  $\norm{\varphi^+}-\norm{\varphi^-}=\varphi^+(e)-\varphi^-(e)$, it follows that $\varphi\in e_\perp$ if and only if $\norm{\varphi^+}=\norm{\varphi^-}$, and so 
\[
e_\perp=\{\varphi\in M_*\colon \norm{\varphi^+}=\norm{\varphi^-}\}.
\]
The following theorem characterizes the proper norm closed faces of $2B_{e_\perp}$. Note that if $F \subseteq 2B_{e_\perp}$ is a proper face, then every element of $F$ has norm $2$.\begin{theorem}\label{thm:faces}
Let $M$ be a JBW-algebra with normal state space $K$. Then $F$ is a proper face of $2B_{e^\perp}$ if and only if $F=F_+-F_-$ where 
\[
F_+:=\{\varphi\in K\colon\varphi-\psi\in F\}\quad \mbox{and}\quad F_-:=\{\psi\in K\colon\varphi-\psi\in F\} 
\]
are proper orthogonal faces of $K$. Moreover, $F$ is norm closed if and only if $F_+$ and $F_-$ are norm closed.
\end{theorem}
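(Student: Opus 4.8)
The plan is to prove both implications together with the norm-closedness statement by analysing the Hahn--Jordan decomposition of the elements of $F$. As noted just before the statement, every $\omega$ in a proper face $F$ has $\norm{\omega}=2$, and since $\omega\in e^\perp$ forces $\norm{\omega^+}=\norm{\omega^-}$, this gives $\norm{\omega^+}=\norm{\omega^-}=1$, so $\omega^+,\omega^-\in K$ and $\omega=\omega^+-\omega^-$ with $\omega^+\perp\omega^-$. First I would check that the sets in the statement satisfy $F_+=\{\omega^+\colon\omega\in F\}$ and $F_-=\{\omega^-\colon\omega\in F\}$: if $\varphi,\psi\in K$ and $\varphi-\psi\in F$, then $\norm{\varphi-\psi}=2=\norm{\varphi}+\norm{\psi}$ forces $\varphi\perp\psi$, so by uniqueness of the decomposition $\varphi=(\varphi-\psi)^+$ and $\psi=(\varphi-\psi)^-$.

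The heart of the forward direction, and the main obstacle, is the orthogonality of $F_+$ and $F_-$: since $\omega\mapsto\omega^+$ is not affine a priori and $M_*$ is not a lattice, one cannot simply manipulate positive parts. I would prove $\omega_1^+\perp\omega_2^-$ for all $\omega_1,\omega_2\in F$ by a support-evaluation trick. Put $\omega:=\tfrac12\omega_1+\tfrac12\omega_2\in F$ and $r:=s(\omega^+)$. Since $s(\omega^-)\perp r$ we have $\omega(r)=\omega^+(r)=1$, while
\[
\omega(r)=\tfrac12\big(\omega_1^+(r)+\omega_2^+(r)\big)-\tfrac12\big(\omega_1^-(r)+\omega_2^-(r)\big)\le\tfrac12\big(\omega_1^+(r)+\omega_2^+(r)\big)\le 1.
\]
Equality throughout forces $\omega_i^+(r)=1$ and $\omega_i^-(r)=0$ for $i=1,2$, i.e.\ $s(\omega_i^+)\le r$ and $s(\omega_i^-)\perp r$, whence $s(\omega_1^+)\perp s(\omega_2^-)$. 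Taking suprema (via Lemma~\ref{lem:support_projections}) gives $p:=s(F_+)\perp s(F_-)=:q$.

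With orthogonality in hand the remaining claims of the forward direction are manipulations with the now unique and additive decomposition. Convexity and the face property of $F_+$ follow because the supports of $t\omega_1^++(1-t)\omega_2^+$ and $t\omega_1^-+(1-t)\omega_2^-$ lie under $p$ and $q$, so uniqueness yields $(t\omega_1+(1-t)\omega_2)^+=t\omega_1^++(1-t)\omega_2^+\in F_+$; and if $\omega^+=t\alpha+(1-t)\beta$ with $\alpha,\beta\in K$, then $\alpha\le t^{-1}\omega^+$ gives $s(\alpha)\le s(\omega^+)\le p\perp q\ge s(\omega^-)$, so $\alpha\perp\omega^-$, and applying the face property of $F$ to $\omega=t(\alpha-\omega^-)+(1-t)(\beta-\omega^-)$ gives $\alpha=(\alpha-\omega^-)^+\in F_+$. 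The inclusion $F\subseteq F_+-F_-$ is immediate; for the reverse, given $\varphi\in F_+$, $\psi\in F_-$ arising from $\omega_1,\omega_2\in F$, the two points $\varphi-\psi$ and $\omega_2^+-\omega_1^-$ lie in $2B_{e^\perp}$ and average to $\tfrac12(\omega_1+\omega_2)\in F$, so the face property yields $\varphi-\psi\in F$. Properness is clear: $F_\pm\ne\emptyset$ since $F\ne\emptyset$, and $F_+=K$ would force $p=e$, contradicting $p\perp q$ with $q\ne 0$.

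For the converse, given proper orthogonal faces $F_+,F_-$ with $p:=s(F_+)\perp s(F_-)=:q$, set $F:=F_+-F_-$; every element has norm $2$, so $F$ is proper, and convexity is clear. For the face property, if $\omega=t\omega_1+(1-t)\omega_2\in F$ with $\omega_i\in 2B_{e^\perp}$, I would evaluate at $p-q$: writing $\omega=\varphi-\psi$ with $\varphi\in F_+$, $\psi\in F_-$ gives $\omega(p-q)=2$, and $\omega_i(p-q)\le\norm{\omega_i}\norm{p-q}\le 2$ forces $\omega_i(p-q)=2$, which pins down $\omega_i^\pm\in K$ with $s(\omega_i^+)\le p$ and $s(\omega_i^-)\le q$; the face property of $F_+$ and $F_-$ then gives $\omega_i^\pm\in F_\pm$, so $\omega_i\in F$. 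Finally, for the norm-closedness statement I would note that orthogonality of supports makes the bijection $F_+\times F_-\to F$, $(\varphi,\psi)\mapsto\varphi-\psi$, isometric for the sum metric, since $\norm{(\varphi_1-\psi_1)-(\varphi_2-\psi_2)}=\norm{\varphi_1-\varphi_2}+\norm{\psi_1-\psi_2}$ when the first differences are supported under $p$ and the second under $q$; hence $F$ is closed if and only if both $F_+$ and $F_-$ are. The implication from $F$ closed is even more direct: fixing $\psi_0\in F_-$, any norm limit $\varphi$ of elements of $F_+$ satisfies $s(\varphi)\le p$, hence $\varphi\perp\psi_0$ and $\varphi-\psi_0\in F$, whence $\varphi=(\varphi-\psi_0)^+\in F_+$.
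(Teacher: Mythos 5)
Your proof is correct, and its key steps take a genuinely different route from the paper's. You begin by identifying $F_\pm$ with the sets of positive/negative parts of elements of $F$ (every element of a proper face has norm $2$, so its two state components are orthogonal and the decomposition is unique), and you then obtain the central orthogonality claim by a support-projection squeeze: averaging $\omega_1,\omega_2\in F$ and evaluating at $r=s(\omega^+)$ forces $s(\omega_i^+)\le r$ and $s(\omega_i^-)\perp r$. The paper instead derives orthogonality from norm considerations: the element $\tfrac12(\varphi+\tau)-\tfrac12(\rho+\psi)$ lies in $F$, hence has norm $2$, which saturates the triangle inequality and gives $(\varphi+\tau)\perp(\rho+\psi)$, after which Lemma~\ref{lem:orthogonal_functionals1} is applied twice. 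Likewise, in the converse you evaluate at $p-q$ and squeeze, where the paper uses norm additivity plus uniqueness of the orthogonal decomposition; and for the closedness equivalence you use the bijection $F_+\times F_-\to F$, isometric for the sum metric, where the paper applies the contractions $U_p^*$ and $U_q^*$ to a convergent sequence. Your route is more lattice-theoretic and yields the pleasant structural byproduct that $F$ is isometric to $F_+\times F_-$, which makes the closedness statement symmetric and immediate in both directions; the paper's route stays within norm identities and Lemma~\ref{lem:orthogonal_functionals1}, invoking support projections and quadratic representations only at the closedness step. Two steps in your sketch are asserted rather than proved, though both are routine with the tools at hand: (i) the sum-metric identity $\norm{(\varphi_1-\psi_1)-(\varphi_2-\psi_2)}=\norm{\varphi_1-\varphi_2}+\norm{\psi_1-\psi_2}$ requires knowing that the Jordan decomposition of $\varphi_1-\varphi_2$ is again supported under $p$ (by uniqueness together with contractivity of $U_p^*$) and then additivity of the norm on positive functionals with orthogonal supports; (ii) in the converse, before invoking the face property of $F_\pm$ you must identify $\varphi=t\omega_1^++(1-t)\omega_2^+$ and $\psi=t\omega_1^-+(1-t)\omega_2^-$ via uniqueness of the orthogonal decomposition, exactly as you did in the forward direction. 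These are presentation gaps, not mathematical ones.
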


\begin{proof}
Let $F\subseteq 2B_{e_\perp}$ be a proper face. The inclusion $F\subseteq F_+-F_-$ is clear, so suppose $\varphi\in F_+$ and $\psi\in F_-$. Let $\rho\in F_-$ and $\tau\in F_+$ be such that $\varphi-\rho,\tau-\psi\in F$. Then by convexity $\frac{1}{2}(\varphi-\rho)+\frac{1}{2}(\tau-\psi)\in F$, so
\[
{\textstyle\frac{1}{2}}(\varphi-\psi)+{\textstyle\frac{1}{2}}(\tau-\rho)={\textstyle\frac{1}{2}}(\varphi-\rho)-{\textstyle\frac{1}{2}}(\tau-\psi)\in F.
\]
Since $\varphi-\psi,\tau-\rho\in 2B_{e_\perp}$, it follows that $\varphi-\psi,\tau-\rho\in F$ as it is a face, hence $F_+-F_-\subseteq F$.

To see that $F_+$ and $F_-$ are orthogonal, pick $\varphi\in F_+$ and $\psi\in F_-$. Then for $\varphi-\rho, \tau-\psi\in F$ we have
\[
{\textstyle\frac{1}{2}}(\varphi+\tau)-{\textstyle\frac{1}{2}}(\rho+\psi)={\textstyle\frac{1}{2}}(\varphi-\rho)+{\textstyle\frac{1}{2}}(\tau-\psi)\in F,
\]
so
\[
2=\norm{{\textstyle\frac{1}{2}}(\varphi+\tau)-{\textstyle\frac{1}{2}}(\rho+\psi)}\le{\textstyle{\frac{1}{2}}}
\norm{\varphi+\tau}+{\textstyle\frac{1}{2}}\norm{\rho+\psi}=2
\]
and therefore $(\varphi+\tau)\perp(\rho+\psi)$. Applying Lemma~\ref{lem:orthogonal_functionals1} twice yields $\varphi\perp\psi$. Also note that this implies  $F_+$ and $F_-$ are nonempty proper subsets of $K$. 

To see that $F_+$ and $F_-$ are faces of $K$, suppose $\varphi,\psi\in F_+$ and $0<t<1$. If $\rho\in F_-$, then 
\[
t\phi+(1-t)\psi-\rho=t(\phi-\rho)+(1-t)(\psi-\rho)\in F
\]
by convexity of $F$, so $t\phi+(1-t)\psi\in F_+$ showing the convexity of $F_+$. Similarly, $F_-$ is convex. Suppose $\phi,\psi\in K$ with $t\phi+(1-t)\psi\in F_+$. If $\rho\in F_-$, then 
\[
t(\phi-\rho)+(1-t)(\psi-\rho)=t\phi+(1-t)\psi-\rho\in F.
\]
Hence $\phi-\rho,\psi-\rho\in F$ since $F$ is a face and so $\phi,\psi\in F_+$, showing that $F_+$ is a face of $K$. Similarly, $F_-$ is a face of $K$. 

Conversely, suppose $F_+$ and $F_-$ are orthogonal proper faces of $K$. If $\varphi-\psi,\rho-\tau\in F$ and $0\le t\le 1$, then
\[
t(\varphi-\psi)+(1-t)(\rho-\tau)=(t\varphi+(1-t)\rho)-(t\psi+(1-t)\tau)\in F_+-F_-=F,
\]
so $F$ is convex. Furthermore, if $\varphi-\psi\in F_+ - F_- = F$ is such that $\varphi-\psi=t\sigma+(1-t)\xi$ for some $\sigma,\xi\in 2B_{e_\perp}$, then we have
\begin{align*}
\norm{(t\sigma^++(1-t)\xi^+)-(t\sigma^-+(1-t)\xi^-)}&=\norm{\varphi-\psi}=\norm{\varphi}+\norm{\psi}=2\\&=
\norm{t\sigma^++(1-t)\xi^+}+\norm{t\sigma^-+(1-t)\xi^-}.
\end{align*}
Hence $(t\sigma^++(1-t)\xi^+) \perp (t\sigma^-+(1-t)\xi^-)$, so $\varphi=t\sigma^++(1-t)\xi^+$ and $\psi=t\sigma^-+(1-t)\xi^-$ by the uniqueness of the orthogonal decomposition of $\varphi-\psi$. It follows that $\sigma^+,\xi^+\in F_+$ and $\sigma^-,\xi^-\in F_-$, so $\sigma,\xi\in F$ making it a face of $2B_{e_\perp}$. Since $F_+$ and $F_-$ are proper, $F$ is proper.

As for the closedness statement, suppose $F_+$ and $F_-$ are norm closed and let $(\varphi_n-\psi_n)_{n\ge 1}$ be a sequence in $F$ such that $\varphi_n-\psi_n\to\rho\in 2B_{e_\perp}$. Let $p := s(F_+)$ and $q := s(F_-)$, so that $F_+=F_p$ and $F_-=F_q$. It follows directly from the orthogonality of $F_+$ and $F_-$ that $p\perp q$. Moreover, we have $U^*_p\varphi_n=\varphi_n$ and $U^*_p\psi_n=0$ for all $n\ge 1$ by \cite[Proposition~1.41]{AS2}, hence 
 \[
\norm{\varphi_n-U^*_p(\rho)}=\norm{U^*_p(\varphi_n-\psi_n-\rho)}\le\norm{U^*_p}\norm{\varphi_n-\psi_n-\rho} \to 0
\]
and so $\varphi_n\to U^*_p\rho\in F_+$ as $F_+$ is closed. Similarly, $\psi_n\to U^*_q\rho\in F_-$ and $\rho = \rho_+ - \rho_- = U^*_p\rho-U^*_q\rho\in F$ which shows that $F$ is closed.

Conversely, suppose $F$ is norm closed. Let $(\varphi_n)_{n\ge 1}\subseteq F_+$ and $(\psi_n)_{n\ge 1}\subseteq F_-$ be such that $\varphi_n\to\varphi$ and $\psi_n\to\psi$ for some $\varphi,\psi\in K$. Then for any $\rho\in F_-$ we have $\varphi_n-\rho\to \varphi-\rho\in F$, so $\varphi\in F_+$. Similarly, we find that $\psi\in F_-$ and so $F_+$ and $F_-$ are norm closed faces of $K$.
\end{proof}

It follows from Theorem~\ref{thm:faces} that all the proper norm closed faces of $2B_{e_\perp}$ are of the form $F_p-F_q$ for nontrivial orthogonal projections $p$ and $q$. Recall that the extreme points of a convex set are precisely the singleton faces, so the following corollary, which generalizes our previous result in \cite[Proposition~4.1]{LRW1}, characterizes the extreme points of $2B_{e_\perp}$.

\begin{corollary}\label{cor:extreme_points_dual_JB}
Let $M$ be a JBW-algebra with normal state space $K$. Then
\[
\mathrm{ext}(2B_{e_\perp})=\left\{\varphi-\psi\in e_\perp\colon\mbox{$\varphi,\psi\in K$ are orthogonal pure states}\right\}.
\]
\end{corollary}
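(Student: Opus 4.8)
The plan is to obtain this as a direct specialization of Theorem~\ref{thm:faces} to the smallest possible proper faces, namely the singletons, using the standard fact that a point of a convex set is an extreme point precisely when it forms a singleton face. So the strategy is to translate ``extreme point of $2B_{e^\perp}$'' into ``singleton proper face'', feed this through the theorem, and read off what the corresponding data on $K$ must be.

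First I would note that since the Assumption gives $\dim(M) \geq 2$, the subspace $e^\perp$ is nonzero and $2B_{e^\perp}$ contains more than one point; hence every singleton $\{\rho\} \subseteq 2B_{e^\perp}$ is automatically a \emph{proper} face. Therefore $\rho \in \mathrm{ext}(2B_{e^\perp})$ if and only if $\{\rho\}$ is a proper face, and by Theorem~\ref{thm:faces} this holds exactly when $\{\rho\} = F_+ - F_-$ for some proper orthogonal norm closed faces $F_+, F_-$ of $K$, writing $\rho = \varphi - \psi$ with $\varphi \in F_+$ and $\psi \in F_-$.

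The one elementary step that needs to be isolated is the claim that $F_+ - F_-$ is a singleton if and only if both $F_+$ and $F_-$ are singletons. One direction is immediate; for the other, I would fix some $\psi \in F_-$ (which exists since $F_-$ is proper, hence nonempty) and observe that $\varphi_1 - \psi = \rho = \varphi_2 - \psi$ for all $\varphi_1, \varphi_2 \in F_+$ forces $F_+$ to be a singleton, and symmetrically for $F_-$. Since a singleton face of $K$ is by definition an extreme point of $K$, that is, a pure state, this yields $F_+ = \{\varphi\}$ and $F_- = \{\psi\}$ with $\varphi, \psi$ pure states, and the orthogonality of the faces $F_+, F_-$ is precisely orthogonality of $\varphi$ and $\psi$. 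This establishes the inclusion $\subseteq$.

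For the reverse inclusion, given orthogonal pure states $\varphi, \psi$, the singletons $\{\varphi\}$ and $\{\psi\}$ are themselves proper orthogonal norm closed faces of $K$, so Theorem~\ref{thm:faces} shows that $\{\varphi\} - \{\psi\} = \{\varphi - \psi\}$ is a proper norm closed face of $2B_{e^\perp}$, whence $\varphi - \psi$ is extreme; that $\varphi - \psi \in e^\perp$ follows from $\varphi(e) = \psi(e) = 1$. I expect no genuine obstacle here, as all the substantive work sits in Theorem~\ref{thm:faces}; the only points requiring care are the bookkeeping of the ``proper'' and ``norm closed'' hypotheses and the harmless identification of singleton faces of $K$ with pure states.
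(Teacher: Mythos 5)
Your proposal is correct and follows exactly the route the paper takes: the paper derives the corollary from Theorem~\ref{thm:faces} via the one-line observation that extreme points are precisely the singleton faces, which is your argument with the details (properness under the $\dim \geq 2$ assumption, and the fact that $F_+ - F_-$ is a singleton iff $F_+$ and $F_-$ are) written out explicitly.
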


If $A$ is a JB-algebra, then it follows from Corollary~\ref{cor:extreme_points_dual_JB} that 
\begin{equation}\label{e:dual_of_quotient_in_dual}
[A]' = \ol{\Sp}\left\{\varphi-\psi\in e^\perp \colon \varphi,\psi\in K\ \mbox{are orthogonal pure states}\right\}\subseteq\ol{\Sp}\, \mathrm{ext}(K) = A'.
\end{equation}

The following lemma shows that we can identify $[A]'$ with the annihilator of the unit $e^a$ of the atomic part $(A^{**})_a$ of $A^{**}$.

\begin{lemma}\label{lem:(A/e)'=e_perp}
Let $A$ be a unital JB-algebra with unit $e$ and state space $K$ and let $e^a \in (A^{**})_a$ denote the unit in the atomic part of $A^{**}$. Then 
\[
[A]' = e^a_\perp \subseteq ((A^{**})_a)_*.
\]
\end{lemma}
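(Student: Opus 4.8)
The plan is to work through the identification $A' \cong ((A^{**})_a)_*$ supplied by Corollary~\ref{cor:A'*_atomic_bidual}, and to show that under it $[A]'$ is exactly the set of normal functionals on $(A^{**})_a$ that annihilate the unit $e_a$. First I would record the straightforward reformulation of the target space. The canonical embedding $A \hookrightarrow A'^* \cong (A^{**})_a$ described in Section~\ref{sec:2} sends $e$ to $e_a$ and satisfies $\langle \phi, a \rangle = \phi(a)$ for $\phi \in A'$ and $a \in A$; in particular $\langle \phi, e_a \rangle = \phi(e)$. Hence
\[
e_a^\perp = \{\phi \in A' \colon \phi(e) = 0\} = A' \cap e^\perp,
\]
where $e^\perp = \{\phi \in A^* \colon \phi(e) = 0\}$. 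With this in hand the easy inclusion is immediate: by \eqref{e:dual_of_quotient_in_dual} the space $[A]'$ is the norm-closed span of differences $\varphi - \psi$ of orthogonal pure states, each of which lies in $A'$ and annihilates $e$, so $[A]' \subseteq A' \cap e^\perp = e_a^\perp$.

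For the reverse inclusion I would take $\phi \in e_a^\perp$ and use the unique orthogonal decomposition $\phi = \phi^+ - \phi^-$ in the predual $A' = ((A^{**})_a)_*$ (Lemma~\ref{lem:decomposition_functionals} together with its uniqueness in a predual). Since positive normal functionals attain their norm at the unit and $\langle\phi, e_a\rangle = 0$, we get $\|\phi^+\| = \phi^+(e_a) = \phi^-(e_a) = \|\phi^-\| =: r$; assuming $r > 0$ we write $\phi = r(\omega^+ - \omega^-)$ for orthogonal normal states $\omega^\pm$ of $(A^{**})_a$, whose supports satisfy $p := s(\omega^+) \perp s(\omega^-) =: q$ by \cite[Lemma~5.4]{AS2}.

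The crucial input is that $(A^{**})_a$ is \emph{atomic}: working inside the corners $U_p((A^{**})_a)$ and $U_q((A^{**})_a)$, which are themselves atomic JBW-algebras with units $p$ and $q$, I would approximate in norm $\omega^+ = \lim_n \sum_i \lambda_i^{(n)} \varphi_i^{(n)}$ and $\omega^- = \lim_n \sum_j \mu_j^{(n)} \psi_j^{(n)}$ by finite convex combinations of pure states with $s(\varphi_i^{(n)}) \le p$ and $s(\psi_j^{(n)}) \le q$. Viewed in $A' \subseteq A^*$ these are pure states of $A$, and because $p \perp q$ each $\varphi_i^{(n)}$ is orthogonal to each $\psi_j^{(n)}$. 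Passing to orthogonal differences by cross-multiplying, and using $\sum_i \lambda_i^{(n)} = \sum_j \mu_j^{(n)} = 1$, gives
\[
\sum_i \lambda_i^{(n)} \varphi_i^{(n)} - \sum_j \mu_j^{(n)} \psi_j^{(n)} = \sum_{i,j} \lambda_i^{(n)} \mu_j^{(n)} \bigl(\varphi_i^{(n)} - \psi_j^{(n)}\bigr),
\]
where each summand on the right is a difference of orthogonal pure states and so lies in $[A]'$ by \eqref{e:dual_of_quotient_in_dual}. Thus the left-hand side lies in $[A]'$ for every $n$, and letting $n \to \infty$ yields $\omega^+ - \omega^- \in [A]'$ by norm-closedness; hence $\phi = r(\omega^+ - \omega^-) \in [A]'$, proving $e_a^\perp \subseteq [A]'$ and therefore equality.

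I expect the atomicity-based approximation to be the main obstacle. The weak-$*$ Krein–Milman theorem alone only gives weak-$*$ density of $\Sp\,\mathrm{ext}(2B_{e_a^\perp})$ in $e_a^\perp$, whereas the argument genuinely requires \emph{norm} approximation of normal states by convex combinations of pure states — a property that fails for non-atomic algebras (e.g.\ for an $L^1$-type predual) and is precisely the point at which the identification $A'^* \cong (A^{**})_a$, rather than $A^{**}$ itself, is indispensable. I would justify this approximation by the structure theory of atomic JBW-algebras (an $\ell^\infty$-direct sum of type I factors, whose normal states admit norm-convergent spectral decompositions into pure states), restricted to the relevant corners.
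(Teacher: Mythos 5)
Your proof is correct, and it follows the paper's route in all essentials: the same reformulation of $e_a^\perp$ as the annihilator of $e$ inside $A' \cong ((A^{**})_a)_*$, the same easy inclusion via \eqref{e:dual_of_quotient_in_dual}, the same reduction via the unique orthogonal decomposition to a multiple of $\omega^+ - \omega^-$ with orthogonal positive parts of equal norm, and the same atomicity input --- the norm-convergent decomposition of a normal state of an atomic JBW-algebra into a countable convex combination of orthogonal pure states, which is \cite[Theorem~5.61]{AS2}, precisely the citation your ``structure theory of atomic JBW-algebras'' step needs. (A small simplification: the supports automatically satisfy $s(\varphi_i) \le s(\omega^+) = p$ by Lemma~\ref{lem:support_projections}, since $\lambda_i \varphi_i \le \omega^+$, so your detour through the corners $U_p((A^{**})_a)$ is unnecessary.) The one place where you genuinely diverge is the combinatorial finish. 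The paper keeps the full infinite series $\sum_i (\lambda_i\varphi_i - \mu_i\psi_i)$ and proves a rearrangement claim by a greedy algorithm --- repeatedly subtracting $\gamma_k(\varphi_{i_k} - \psi_{j_k})$, where $\gamma_k$ is the minimum of the two leading nonzero coefficients --- together with an argument that the remainders tend to zero in norm, so that $\varphi - \psi$ itself is exhibited as a norm-convergent series of multiples of differences of orthogonal pure states. You instead truncate to finite convex combinations and invoke the bilinear identity $\sum_i \lambda_i\varphi_i - \sum_j \mu_j\psi_j = \sum_{i,j}\lambda_i\mu_j(\varphi_i - \psi_j)$, valid whenever both coefficient families sum to $1$, and then pass to the norm limit using that $[A]'$ is norm closed. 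Your device is shorter and sidesteps the convergence analysis of the greedy scheme, at the cost only of renormalizing the truncated sums; the paper's argument buys a slightly more explicit conclusion (a series representation rather than membership in a closed span), but that extra precision is not needed for the lemma. Your closing diagnosis --- that the crux is \emph{norm} approximation by pure states, which is exactly what atomicity provides and what fails in $A^{**}$ itself --- is also the right one.
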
 

\begin{proof}
Consider the dual pair $(A, A')$, where $A ' = \ol{\Sp}\, \ext K \subseteq ((A^{**})_a)_*$ (see Lemma~\ref{lem:predual_of_atomic_part}). Let $e \in A$ be the unit of $A$. We will first show that 
\begin{equation}\label{e:dual_e_perp}
e^\perp = \ol{\Sp}\left\{\varphi-\psi \colon\varphi,\psi\in K\ \mbox{are orthogonal pure states}\right\}.
\end{equation}
Clearly $\varphi - \psi \in e^\perp$ for orthogonal $\varphi, \psi \in K$, and then so is its closed linear span since $e^\perp$ is a closed linear subspace. Conversely, suppose $\varphi - \psi \in e^\perp$ with $\varphi \perp \psi$. By \cite[Theorem~5.61]{AS2} we can write 
\[
\varphi = \sum_{i=1}^\infty \lambda_i \varphi_i \quad \mbox{and} \quad \psi = \sum_{i=1}^\infty \mu_i \psi_i
\]
with $\lambda_i, \mu_i > 0$ and all the $\varphi_i$, as well as all the $\psi_i$, orthogonal pure states. Therefore $\varphi - \psi = \sum_{i=1}^\infty (\lambda_i \varphi_i - \mu_i \psi_i)$, and $\sum_{i=1}^\infty \lambda_i = \sum_{i=1}^\infty \mu_i$ since $\varphi - \psi \in e^\perp$. We claim that $\sum_{i=1}^\infty (\lambda_i \varphi_i - \mu_i \psi_i)$ can be written as $\sum_{k=1}^\infty \gamma_k(\phi_{i_k} - \psi_{i_k})$. If the claim holds, then
\[
\varphi - \psi = \sum_{k=1}^\infty \gamma_k(\phi_{i_k} - \psi_{i_k}) \in \ol{\Sp}\left\{\varphi-\psi \colon\varphi,\psi\in K\ \mbox{are orthogonal pure states}\right\}
\]
which shows \eqref{e:dual_e_perp}. Then $[A]'^* \cong (e^\perp)^* \cong (\ol{\Sp} \ext K)^* / e^{\perp \perp}$, and since $e^\perp$ has codimension one, $e^{\perp \perp}$ has dimension one and contains $e^a$, thus $[A]'^* \cong [(A^{**})_a]$ as required. 

It remains to prove the claim. Let $C$ be the set of elements of the form  $\sum_{i=1}^\infty (\alpha_i \phi_i - \beta_i \psi_i)$, where $\alpha_i, \beta_i \geq 0$ for all $i$ with $\sum_{i=1}^\infty \alpha_i = \sum_{i=1}^\infty \beta_i$. Note that $\xi_1 := \phi - \psi \in C$. Given a nonzero element $\xi_n = \sum_{i=1}^\infty (\alpha_i \phi_i - \beta_i \psi_i) \in C$, let $i_n := \min\{i \colon \alpha_i > 0\}$ and $j_n := \min\{j \colon \beta_j > 0\}$. Define $\gamma_n := \alpha_{i_n} \wedge \beta_{j_n}$ and $\xi_{n+1} := \xi_n - \gamma_n(\phi_{i_n} - \psi_{j_n}) \in C$. This process generates a sequence $(\xi_n)_{n \geq 1}$ in $C$ such that $\xi_n = \xi_1 - \sum_{k=1}^{n-1} \gamma_k(\phi_{i_k} - \psi_{i_k})$. Since $\xi_n \to 0$, we have that $\xi_1 =  \sum_{k=1}^\infty \gamma_k(\phi_{i_k} - \psi_{i_k})$ as required.
\end{proof}

Consequently, $[A]'^* \cong [(A^{**})_a]$. (Note that the proof of Lemma~\ref{lem:(A/e)'=e_perp} also shows that $[\mathcal{A}]'^* \cong [(\mathcal{A}^{**})_a]$ for $C$*-algebras $\mathcal{A}$.)

We return to the situation where $S \colon [A] \to [B]$ is an isometric isomorphism. Let $e_A$ be the unit of $A$. Under the canonical embedding $A \hookrightarrow A'^* \cong (A^{**})_a$, $e_A$ corresponds to the unit of $(A^{**})_a$. Therefore, by Lemma~\ref{lem:(A/e)'=e_perp}, $S' \colon e_B^\perp \to e_A^\perp$ is an isometric isomorphism. Since $e_A^\perp$ and $e_B^\perp$ are the predual of the JBW-algebras $(A^{**})_a$ and $(B^{**})_a$, the facial structure of the balls $2B_{e_A^\perp}$ and $2B_{e_B^\perp}$ follows from Theorem~\ref{thm:faces}. Our next goal is to describe an important subclass of these faces.

\begin{lemma}\label{lem:maximal_faces}
Let $M$ be a JBW-algebra. With respect to inclusion, the maximal norm closed proper faces of $2B_{e_\perp} \subseteq M_*$ are of the form $F_p-F_{p^\perp}$ for some nontrivial projection $p$.
\end{lemma}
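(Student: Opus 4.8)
The plan is to reduce the statement to an order-theoretic question about pairs of projections, using the classification already obtained in Theorem~\ref{thm:faces}. That theorem identifies the proper norm closed faces of $2B_{e^\perp}$ exactly with the sets $F_p - F_q$, where $p=s(F_+)$ and $q=s(F_-)$ are nontrivial orthogonal projections, and conversely every such pair gives a proper norm closed face. Hence the maximal norm closed proper faces correspond precisely to the maximal elements, under inclusion, of the family of admissible faces, and it suffices to understand when one admissible face is contained in another.

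The first substantial step is to establish the inclusion criterion
\[
F_p - F_q \subseteq F_{p'} - F_{q'} \quad \Longleftrightarrow \quad p \leq p' \ \text{and}\ q \leq q'.
\]
The implication ``$\Leftarrow$'' is immediate from the standard order isomorphism between norm closed faces of $K$ and projections: if $p \leq p'$ and $\varphi(p)=1=\varphi(e)$, then $1=\varphi(p)\leq\varphi(p')\leq 1$, so $F_p \subseteq F_{p'}$, and similarly for $q$. For ``$\Rightarrow$'', I would fix $\varphi \in F_p$ together with any $\psi \in F_q$ (which exists since $q$ is nontrivial); then $\varphi - \psi \in F_p - F_q \subseteq F_{p'} - F_{q'}$, so $\varphi \in (F_{p'} - F_{q'})_+ = F_{p'}$ by the description of $F_+$ in Theorem~\ref{thm:faces}. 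This yields $F_p \subseteq F_{p'}$, hence $p \leq p'$, and the symmetric argument gives $q \leq q'$.

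With this criterion in hand, the problem becomes finding the maximal pairs $(p,q)$ of nontrivial projections with $p \perp q$, ordered componentwise, and I would show these are exactly the pairs with $q = p^\perp$. On the one hand, for any admissible pair $(p,q)$ orthogonality gives $q \leq p^\perp$; moreover $p \neq e$, since $p=e$ would force $q \leq p^\perp = 0$, contradicting that $q$ is nontrivial. Thus $p^\perp$ is nontrivial and $(p,p^\perp)$ is again admissible and dominates $(p,q)$, so every maximal pair must satisfy $q = p^\perp$. On the other hand, to see that $(p, p^\perp)$ is itself maximal, suppose $(p, p^\perp) \leq (p', q')$ with $p' \perp q'$; then $p \leq p'$ while $p^\perp \leq q' \leq p'^\perp$ gives $p' \leq p$, forcing $p' = p$ and then $q' = p^\perp$. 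The only mildly delicate point is the ``$\Rightarrow$'' direction of the inclusion criterion, namely recovering the projection order from a set inclusion of differences of faces; once that is available, the orthogonality constraint $p' \perp q'$ does the real work in the maximality argument, as it is precisely what blocks enlarging the first coordinate of $(p, p^\perp)$.
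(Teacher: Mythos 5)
Your proof is correct and takes essentially the same route as the paper: both reduce the problem, via Theorem~\ref{thm:faces} and the correspondence between norm closed faces $F_p-F_q$ and pairs of nontrivial orthogonal projections, to a short lattice computation showing the maximal pairs are exactly $(p,p^\perp)$. The only difference is presentational: you isolate and prove the inclusion criterion $F_p - F_q \subseteq F_{p'} - F_{q'} \Leftrightarrow p \le p' \text{ and } q \le q'$ (via uniqueness of the orthogonal decomposition), a step the paper's proof uses implicitly when it asserts that $F_p-F_{p^\perp}\subseteq F_q-F_r$ forces $p\le q$ and $p^\perp\le r$.
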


\begin{proof}
Suppose that $F=F_p-F_{p^\perp}$ for some nontrivial projection $p$. If $F_p-F_{p^\perp}\subseteq F_q-F_r$ for some orthogonal projections $q$ and $r$, then we must have $p\le q$ and $p^\perp\le r$. But $e=p+p^\perp\le q+r\le e$, so $q=r^\perp$ and we have $r^\perp\le p\le q=r^\perp$. Hence $p=q$ and $p^\perp=r$, so $F$ is maximal.

Conversely, if $F=F_p-F_q$ with $p^\perp\neq q$, then $q\lneq p^\perp$ so $F_q\subsetneq F_{p^\perp}$ by \cite[Corollary~5.33]{AS2}. Hence $F_p-F_q\subsetneq F_p-F_{p^\perp}$, showing that $F$ is not maximal.
\end{proof}

We will denote these maximal faces by $G_p:=F_p-F_{p^\perp}$. By \cite[Proposition~5.39]{AS2} the faces $F_u$ of $K$ for an atom $u$ are of the form $F_u=\{\rho\}$ for some pure state $\rho\in K$. We will follow the notation in \cite[Definition~5.40]{AS2} and denote this pure state $\rho$ by $\hat{u}$.  

\begin{lemma}\label{lem:mutual_distance_2}
Let $M$ be a JBW-algebra, and let $F=F_p-F_q\subseteq 2B_{e_\perp}$ be a proper face. Then $\norm{\varphi-\psi}\le 2$ for all $\varphi,\psi\in F$ if and only if $p$ or $q$ is an atom.
\end{lemma}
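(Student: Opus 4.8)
The plan is to reduce the statement to a computation of the norm diameter of $F$ and to express it in terms of the diameters of the faces $F_p$ and $F_q$ of $K$. By Theorem~\ref{thm:faces} the projections $p$ and $q$ are orthogonal, and every element of $F = F_p - F_q$ has the form $\alpha - \beta$ with $\alpha \in F_p$ and $\beta \in F_q$; since $\alpha(p) = 1$ and $\beta(q) = 1$ these states satisfy $s(\alpha) \le p$ and $s(\beta) \le q$. First I would fix two elements $\varphi = \alpha_1 - \beta_1$ and $\psi = \alpha_2 - \beta_2$ of $F$ and write $\varphi - \psi = (\alpha_1 - \alpha_2) - (\beta_1 - \beta_2)$.

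The crux of the argument is the norm identity
\[
\norm{\varphi - \psi} = \norm{\alpha_1 - \alpha_2} + \norm{\beta_1 - \beta_2}.
\]
To prove it, note that $\alpha_1 - \alpha_2$ is carried by $p$ and $\beta_1 - \beta_2$ by $q$, so that $(\varphi - \psi)(x) = (\alpha_1 - \alpha_2)(U_p x) - (\beta_1 - \beta_2)(U_q x)$ for every $x \in M$. Since $p \perp q$, for $y \in U_p M$ and $z \in U_q M$ the element $y + z$ satisfies $U_p(y+z) = y$, $U_q(y+z) = z$, and $\norm{y + z} = \max(\norm{y}, \norm{z})$, because $U_p M$ and $U_q M$ are orthogonal ideals of $U_{p+q}M$ on which the order-unit norm is the maximum. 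Hence maximizing $(\varphi - \psi)(y+z)$ over the unit balls of $U_p M$ and $U_q M$ independently yields the identity above. Taking the supremum over $\varphi, \psi \in F$ then gives
\[
\sup_{\varphi,\psi\in F}\norm{\varphi-\psi} = \diam(F_p) + \diam(F_q),
\]
with diameters taken in the norm of $M_*$.

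It remains to evaluate $\diam(F_p)$ (and symmetrically $\diam(F_q)$), and here I would show that it equals $0$ when $p$ is an atom and $2$ otherwise. If $p$ is an atom, then $F_p = \{\hat p\}$ is a singleton by \cite[Proposition~5.39]{AS2}, so $\diam(F_p) = 0$. If $p$ is not an atom, then $p$ dominates a projection $r$ with $0 \ne r \lneq p$, so $r$ and $p - r$ are nonzero and orthogonal; choosing states $\alpha_1, \alpha_2 \in F_p$ with $s(\alpha_1) \le r$ and $s(\alpha_2) \le p - r$ gives orthogonal states, whence $\norm{\alpha_1 - \alpha_2} = 2$, while always $\norm{\alpha_1 - \alpha_2} \le \norm{\alpha_1} + \norm{\alpha_2} = 2$; thus $\diam(F_p) = 2$. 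Consequently $\diam(F_p) + \diam(F_q) \le 2$ exactly when at least one of the two diameters vanishes, that is, when $p$ or $q$ is an atom, which is the claimed equivalence.

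I expect the main obstacle to be the norm-additivity identity of the second paragraph: the rest is either bookkeeping about supports or the elementary dichotomy for the diameter of a state space. The delicate point is to justify cleanly that the dual norm on $M_*$ splits additively across the orthogonally supported parts, for which the decisive input is the orthogonality $p \perp q$ supplied by Theorem~\ref{thm:faces}, realized concretely through the identification of $U_p M$ and $U_q M$ as orthogonal summands carrying the maximum norm.
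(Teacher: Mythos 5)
Your argument reaches the lemma by a genuinely different organization than the paper's. You prove an exact diameter formula: writing elements of $F$ as $\alpha-\beta$ with $\alpha\in F_p$, $\beta\in F_q$, you claim the dual norm splits additively across the $p$- and $q$-supported parts, so that $\sup_{\varphi,\psi\in F}\norm{\varphi-\psi}=\diam(F_p)+\diam(F_q)$, and you combine this with the dichotomy $\diam(F_r)\in\{0,2\}$, with value $0$ exactly when $r$ is an atom. The paper never states such an identity: it handles the atom case by the two-line estimate $\norm{(\hat{p}-\tau)-(\hat{p}-\sigma)}=\norm{\sigma-\tau}\le 2$, and in the non-atom case it splits $p=p_1+p_2$, $q=q_1+q_2$, picks $\varphi_i\in F_{p_i}$, $\psi_i\in F_{q_i}$, and applies Lemma~\ref{lem:orthogonal_functionals1} to get $(\varphi_1+\psi_2)\perp(\varphi_2+\psi_1)$, producing two elements of $F$ at distance exactly $4$. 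Your converse is essentially this same construction (your $r$ and $p-r$ play the role of $p_1,p_2$); the substantive difference is that the paper only ever uses orthogonality of \emph{positive} normal functionals, where the support projection calculus applies directly, whereas your additivity identity concerns differences of states, i.e.\ non-positive functionals, and therefore needs an extra algebraic ingredient. What your route buys is a cleaner quantitative statement (the diameter of $G_p$ type faces is literally $\diam F_p + \diam F_q$); what the paper's route buys is economy, since everything is reduced to the already proved orthogonality lemmas.

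That extra ingredient is where your write-up has a genuine flaw: $U_pM$ and $U_qM$ are \emph{not} ideals of $U_{p+q}M$ in general; that would force the Peirce-$\tfrac12$ space of $p$ inside $U_{p+q}M$ to vanish, i.e.\ $p$ to be central there. Already for $M$ the $2\times 2$ symmetric real matrices with $p=e_{11}$, $q=e_{22}$ one has $U_{p+q}M=M$, and $\R e_{11}$ is not an ideal of $M$. What is true, and is all you need, is that $U_pM\circ U_qM=\{0\}$ by the Peirce multiplication rules, so $U_pM\oplus U_qM$ is a subalgebra of $M$ that is the algebra direct sum of $U_pM$ and $U_qM$ (each summand is an ideal \emph{of this subalgebra}, not of $U_{p+q}M$), and on such a direct sum the norm is the maximum: for positive $a\in U_pM$, $b\in U_qM$ one has $a+b\le\max(\norm{a},\norm{b})(p+q)$, hence $\norm{a+b}\le\max(\norm{a},\norm{b})$, the reverse inequality follows by applying the contractions $U_p$, $U_q$, and the general case follows by splitting $y$ and $z$ into positive and negative parts, which combine compatibly since all cross products vanish. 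With this repair your duality argument, testing $\rho_1+\rho_2$ against elements $y+z$, does give $\norm{\rho_1+\rho_2}=\norm{\rho_1}+\norm{\rho_2}$ whenever $U_p^*\rho_1=\rho_1$, $U_q^*\rho_2=\rho_2$ and $p\perp q$, and the rest of your proof goes through; alternatively, you can avoid the identity entirely and conclude the converse directly from Lemma~\ref{lem:orthogonal_functionals1}, as the paper does.
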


\begin{proof}
Let $p$ be an atom. Then $F_p=\{\hat{p}\}$, so if $\varphi,\psi\in F$, then $\varphi=\hat{p}-\tau$ and $\psi=\hat{p}-\sigma$ for some $\tau,\sigma\in F_q$. It follows that
\[
\norm{\varphi-\psi}=\norm{(\hat{p}-\tau)-(\hat{p}-\sigma)}=\norm{\sigma-\tau} \le 2.
\]

Conversely, suppose that $p=p_1+p_2$ and $q=q_1+q_2$ for some nontrivial projections $p_1$, $p_2$, $q_1$, and $q_2$. Now choose $\varphi_i\in F_{p_i}$ and $\psi_i\in F_{q_i}$ for $i=1,2$. Note that $\varphi_1\perp\varphi_2$ and $\psi_1\perp\psi_2$, so $(\varphi_1+\psi_2)\perp(\varphi_2+\psi_1)$ by    Lemma~\ref{lem:orthogonal_functionals1}. For $\varphi:=\varphi_1-\psi_1\in F_p-F_q$ and $\psi:=\varphi_2-\psi_2\in F_p-F_q$ it now follows that
\[
\norm{\varphi-\psi}=\norm{(\varphi_1-\psi_1)-(\varphi_2-\psi_2)}
=\norm{(\varphi_1+\psi_2)-(\varphi_2+\psi_1)}=\norm{\varphi_1+\psi_2}+\norm{\varphi_2+\psi_1}=4.
\]
\end{proof}
Lemma~\ref{lem:mutual_distance_2} shows that $\pm G_u$ are precisely the maximal proper norm closed faces of $2B_{e_\perp}$ in which elements have mutual distance at most two.

\subsection{The orthoisomorphism on the atomic parts}
The maximal norm closed faces of diameter at most two in $2B_{e_\perp}$, which equal $\pm G_u$ by the above, must be preserved by the isometric isomorphism $S' \colon e_B^\perp \to e_A^\perp$. We proceed with showing that such an isometric isomorphism fixes the sign of these faces, and consequently induces an orthoisomorphism on the projection lattices between $(A^{**})_a$ and $(B^{**})_a$.

\begin{lemma}\label{lem:sign_theta}
Let $M$ and $N$ be atomic JBW-algebras and let
\[
L \colon e^M_\perp \subseteq M_* \to e^N_\perp \subseteq N_*
\]
be an isometric isomorphism. Then there exists an $\eps \in \{\pm 1\}$ and a map $\theta$ from the atoms of $M$ into the atoms of $N$ such that $L(G_u)=\eps G_{\theta(u)}$ for all atoms $u \in M$.
\end{lemma}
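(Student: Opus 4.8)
The plan is to first exploit the facial description already in hand. Since $L$ is an isometric isomorphism it carries $2B_{e_M^\perp}$ onto $2B_{e_N^\perp}$, preserving norm-closed proper faces, maximality under inclusion, and diameters. By Lemma~\ref{lem:maximal_faces} together with Lemma~\ref{lem:mutual_distance_2}, the maximal norm-closed proper faces of diameter at most two are exactly the faces $\pm G_u$ with $u$ an atom (recall $-G_p = G_{p^\perp}$). Hence for each atom $u \in M$ we get $L(G_u) = \eps_u G_{\theta(u)}$ for some $\eps_u \in \{\pm 1\}$ and some atom $\theta(u) \in N$, and the whole content of the statement is that $\eps_u$ can be chosen independently of $u$. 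I would split the argument according to Lemma~\ref{lem:equivalences_R2_and_SF}: whether some (equivalently every) atom $u$ has $u^\perp$ again an atom, i.e.\ whether $M$ is $\R^2$ or a spin factor.

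In the generic case, where no atom $u$ has $u^\perp$ an atom, each $G_u$ (and hence $L(G_u)$) contains at least two points, which forces $\theta(u)^\perp$ not to be an atom and thereby makes the representation $\eps_u G_{\theta(u)}$ unique, so $\eps_u$ is well defined. The crux, and the step I expect to be the main obstacle, is showing that $\eps_u$ is locally constant. For distinct orthogonal atoms $u,v$ the functional $x := \hat u - \hat v$ lies in $G_u \cap (-G_v)$, since $\hat v \in F_{u^\perp}$ and $\hat u \in F_{v^\perp}$. Applying $L$ and using that membership of a functional in $G_w = \{\hat w\} - F_{w^\perp}$ pins its positive part down to $\hat w$ (and in $-G_w$ pins its negative part down to $\hat w$), I would read off, in each of the four sign combinations for $(\eps_u,\eps_v)$, which of $(Lx)^+,(Lx)^-$ equals which of $\hat{\theta(u)},\hat{\theta(v)}$. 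The two matched-sign cases are consistent, whereas each mismatched case forces $\hat{\theta(u)} = \hat{\theta(v)}$, hence $\theta(u)=\theta(v)$; then $L(G_v) = -L(G_u) = L(G_{u^\perp})$, and injectivity of $L$ gives $v = u^\perp$, contradicting that $u^\perp$ is not an atom. Thus $\eps_u = \eps_v$ for all distinct orthogonal atoms.

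To promote this to a single global sign I would connect arbitrary atoms through a common orthogonal atom. For distinct atoms $u,u'$ the projection $u \vee u'$ cannot equal $e$, since Lemma~\ref{lem:equivalences_R2_and_SF}(5) would then put us in the excluded case; so $(u\vee u')^\perp \neq 0$ and, by atomicity, dominates an atom $w$, which satisfies $w \perp u$, $w \perp u'$ and $w \neq u,u'$. The previous paragraph then gives $\eps_u = \eps_w = \eps_{u'}$. This produces a common value $\eps$ and settles the generic case.

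Finally, if $M$ is $\R^2$ or a spin factor then every atom $u$ has $u^\perp$ an atom, so $F_u = \{\hat u\}$ and $F_{u^\perp} = \{\widehat{u^\perp}\}$ are singletons and $G_u = \{\hat u - \widehat{u^\perp}\}$ is a single extreme point of $2B_{e_M^\perp}$ by Corollary~\ref{cor:extreme_points_dual_JB}. Since $L$ maps extreme points of the ball to extreme points, $L(G_u)$ is a singleton maximal face $\pm G_v$; as $v^\perp$ is then also an atom, $-G_v = G_{v^\perp}$ is itself of the form $G_{(\cdot)}$, so $L(G_u)$ can in every case be written as $G_{\theta(u)}$ with positive sign. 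Taking $\eps = +1$ uniformly then completes the proof.
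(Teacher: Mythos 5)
Your proposal is correct and takes essentially the same route as the paper: the same case split along the $\R^2$/spin-factor alternative of Lemma~\ref{lem:equivalences_R2_and_SF}, the same key step that orthogonal atoms $u\perp v$ must receive the same sign (obtained by applying $L$ to $G_u\cap(-G_v)=\{\hat u-\hat v\}$ and using uniqueness of the orthogonal decomposition, where the paper equivalently computes the face intersection $G_r\cap G_s=F_{r\wedge s}-F_{r^\perp\wedge s^\perp}$), and the same chaining of arbitrary atoms through a common atom below $(u\vee u')^\perp$ using that $u\vee u'=e$ is excluded. The only cosmetic difference is that you split cases on $M$ rather than on $N$, which lets you skip the paper's short transfer argument that $M$ is neither $\R^2$ nor a spin factor whenever $N$ is not.
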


\begin{proof} 
By the above discussion $L(G_u)$ must be of the form $\eps(u) G_{\theta(u)}$ for some $\eps(u) \in \{\pm 1\}$ and some atom $\theta(u) \in N$. We have to show that $\eps$ can be chosen uniformly. Note that if $N$ is either $\R^2$ or a spin factor, then $-G_{\theta(u)} = G_{\theta(u)^\perp}$ for all atoms $u$, so $\eps$ can be chosen uniformly. So we may assume that $N$ is neither $\R^2$ nor a spin factor. By Lemma~\ref{lem:equivalences_R2_and_SF}, if $p \in \P(N)$, then $p$ and $p^\perp$ cannot both be atoms, so the norm closed faces $G_p \subseteq 2B_{e_\perp^N}$ are not extreme points by Corollary~\ref{cor:extreme_points_dual_JB}. Hence there are norm closed faces in $2B_{e_\perp^M}$ that are not extreme points; therefore $M$ is neither $\R^2$ nor a spin factor as well.

Suppose that $u,v \in M$ are distinct atoms such that $L(G_u)=G_r$ and $L(G_v)=-G_s$ for some atoms $r,s \in N$. Then 
\[
F_{r \wedge s} - F_{r^\perp \wedge s^\perp} = G_r \cap G_s = L(G_u) \cap L(-G_v) = L(G_u \cap -G_v)=L(F_{u \wedge v^\perp} - F_{u^\perp\wedge v}),
\]
and since $r$ and $s$ are atoms, either $r \wedge s = 0$ or $r\wedge s=r=s$. In the second case $F_{r \wedge s} - F_{r^\perp \wedge s^\perp} = G_r$ and so $F_{u \wedge v^\perp} - F_{u^\perp\wedge v} = G_u$, which implies that $v=u^\perp$. Lemma~\ref{lem:equivalences_R2_and_SF} now implies that $M$ is either $\R^2$ or a spin factor, which contradicts our assumption. In the first case $F_{r \wedge s}=\emptyset$ and so $F_{u \wedge v^\perp} - F_{u^\perp\wedge v} = \emptyset$ as well.
 
If $u \perp v$, then
\[
F_{u \wedge v^\perp} - F_{u^\perp\wedge v} = F_u - F_v = \{\hat{u} - \hat{v} \} \not= \emptyset,
\]
which is impossible, showing that $\eps(u) = \eps(v)$ for orthogonal atoms $u$ and $v$. Suppose that $u$ and $v$ are not orthogonal. If $u \vee v = e$, then Lemma~\ref{lem:equivalences_R2_and_SF} shows that $M$ is either $\R^2$ or a spin factor, contradicting our assumption. Hence $u \vee v < e$. Choose an atom $w \leq (u \vee v)^\perp$. Then $w \perp u$ and $w \perp v$, so $\eps(u) = \eps(w) = \eps(v)$.
\end{proof}

Fixing the sign for such an isometry $L$ allows us to construct an orthoisomorphism between the projection lattices of atomic JBW-algebras.

\begin{theorem}\label{thm:orthoisomorphism_atomic_part}
Let $M$ and $N$ be atomic JBW-algebras and let
\[
L\colon e^M_\perp\subseteq M_*\to e^N_\perp\subseteq N_*
\]
be an isometric isomorphism. Let $\eps \in \{\pm 1\}$ and $\theta$ be a map from the atoms of $M$ into the atoms of $N$ such that $L(G_u)=\eps G_{\theta(u)}$ for all atoms $u \in M$. Then we can extend $\theta$ to
\[
\theta\colon\P(M) \to \P(N), \quad \theta(p) := \bigvee \{ \theta(u) \colon u \mbox{ an atom with } u \le p \},
\]
which defines an orthoisomorphism. Moreover, if $p, q \in \P(M)$ are orthogonal nontrivial projections, then $L(G_p) = \eps G_{\theta(p)}$ and $L( F_p - F_q) = \eps (F_{\theta(p)} - F_{\theta(q)})$.
\end{theorem}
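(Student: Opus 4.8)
The plan is to determine the image $L(G_p)$ of each maximal face, for an arbitrary projection $p$, by tracking how $L$ acts on the extreme points of $G_p$, and then to read off $\theta(p)$ together with all the orthoisomorphism properties from this facial data. The essential observation is the action of $L$ on the relevant extreme points: for orthogonal atoms $u,v$ the singleton $\{\hat u - \hat v\}$ is exactly the intersection $G_u \cap (-G_v) = F_{u \wedge v^\perp} - F_{u^\perp \wedge v} = F_u - F_v$ (using $u \le v^\perp$ and $v \le u^\perp$), so applying $L$ and combining $L(G_u) = \eps G_{\theta(u)}$, the identity $-G_{\theta(v)} = G_{\theta(v)^\perp}$, and the orthogonality $\theta(u) \perp \theta(v)$ yields $L(\hat u - \hat v) = \eps(\hat{\theta(u)} - \hat{\theta(v)})$. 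Here orthogonality of atoms is preserved in both directions: $G_u \cap (-G_v) \neq \emptyset$ if and only if $u \perp v$, and $L$ preserves intersections and nonemptiness, so $u \perp v \Leftrightarrow \theta(u) \perp \theta(v)$; injectivity of $\theta$ on atoms follows just as quickly from $G_{\theta(u)} = G_{\theta(v)} \Rightarrow G_u = G_v$.

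Next, for a nontrivial projection $p$ I would use that an isometric isomorphism carries the maximal norm closed proper faces of $2B_{e_M^\perp}$ bijectively onto those of $2B_{e_N^\perp}$, so by Lemma~\ref{lem:maximal_faces} there is a nontrivial projection $r$ with $L(G_p) = G_r$. By Corollary~\ref{cor:extreme_points_dual_JB} and the facial structure, $\ext(G_p) = \{\hat u - \hat v : u \le p,\ v \le p^\perp \text{ atoms}\}$, and $L$ maps this bijectively onto $\ext(G_r) = \{\hat a - \hat b : a \le r,\ b \le r^\perp \text{ atoms}\}$. Combining this with the formula for $L$ on extreme points and invoking the uniqueness of the orthogonal decomposition in the predual, matching positive and negative parts shows (for $\eps = 1$) that the atoms below $r$ are precisely $\{\theta(u) : u \le p\}$ and those below $r^\perp$ are precisely $\{\theta(v) : v \le p^\perp\}$; since every projection in an atomic JBW-algebra is the join of the atoms beneath it, this gives $r = \theta(p)$ and $r^\perp = \theta(p^\perp)$, whence also $\theta(p^\perp) = \theta(p)^\perp$. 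For $\eps = -1$ the positive and negative parts swap, giving $r = \theta(p^\perp) = \theta(p)^\perp$; in both cases $L(G_p) = \eps G_{\theta(p)}$. The identical extreme-point matching, with a projection $q$ in place of $p^\perp$, then proves the final formula $L(F_p - F_q) = \eps(F_{\theta(p)} - F_{\theta(q)})$ for orthogonal nontrivial $p,q$.

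The payoff of this step is the key structural fact that the atoms below $\theta(p)$ are exactly the images $\theta(u)$ of atoms $u \le p$. With this in hand the orthoisomorphism properties become routine. Orthogonality is preserved in both directions because, by atomicity, $p \perp q$ holds if and only if every atom below $p$ is orthogonal to every atom below $q$, and this statement is transported verbatim through $\theta$ using preservation of orthogonality on atoms. Injectivity follows since $\theta(p) = \theta(q)$ forces equal sets of atoms below, hence equal projections, using injectivity of $\theta$ on atoms; surjectivity and the identities $\theta(0) = 0$ and $\theta(e) = e$ follow by applying the same construction to $L^{-1}$, whose atom map is inverse to $\theta$ (in the exceptional case where the two signs multiply to $-1$, Lemma~\ref{lem:equivalences_R2_and_SF} forces $M$ to be $\R^2$ or a spin factor, where $p \mapsto p^\perp$ is itself a bijection and bijectivity is immediate).

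I expect the main obstacle to be precisely the identification $r = \theta(p)$, that is, proving that no extra atoms lie below $L(G_p)$. Preservation of orthogonality on atoms alone does not control a general atom $w$ of $M$ that is neither below $p$ nor below $p^\perp$, so a purely lattice-theoretic extension argument is insufficient; one is forced to extract the atoms below $r$ from the facial and extreme-point data via the uniqueness of the orthogonal decomposition in the predual. Once this identification is secured, the remaining lattice bookkeeping and the sign and exceptional-case analysis are straightforward.
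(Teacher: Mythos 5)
Your proposal is correct and follows essentially the same route as the paper: orthogonality of atoms is read off from $G_u \cap (-G_v) = \{\hat{u} - \hat{v}\}$ being nonempty, the image $L(G_p)$ of a maximal norm closed proper face is identified as $\eps G_{\theta(p)}$ by matching extreme points and invoking uniqueness of the orthogonal decomposition in the predual, and the orthoisomorphism properties then follow from atomicity. The only cosmetic differences are that the paper establishes bijectivity by explicitly constructing the candidate inverse $q \mapsto \bigvee\{\theta^{-1}(v) \colon v \le q \mbox{ an atom}\}$ rather than by applying the whole construction to $L^{-1}$ with your sign analysis, and it deduces the final formula from $L(F_p - F_q) = L(G_p \cap -G_q) = L(G_p) \cap -L(G_q)$ rather than by a second extreme-point matching.
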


\begin{proof}

Since $\bigvee\emptyset=0$, we have $\theta(0)=0$. Let $u$ and $v$ be orthogonal atoms in $M$. We have $G_u \cap - G_v = F_u - F_v = \{ \hat{u} - \hat{v} \}$, so 
\begin{align*}
\eps(F_{\theta(u) \wedge \theta(v)^\perp} - F_{\theta(u)^\perp \wedge \theta(v)}) 
&= \eps(G_{\theta(u)}\cap-G_{\theta(v)}) = L(G_{u})\cap-L(G_{v}) \\
&= L(G_{u}\cap-G_{v}) = L(\{\hat{u} - \hat{v} \}).
\end{align*}
Hence $\theta(u)\wedge \theta(v)^\perp=\theta(u)$ and $\theta(u)^\perp\wedge \theta(v)=\theta(v)$, thus $\theta(u)$ and $\theta(v)$ are orthogonal atoms in $N$. Similarly, the converse is true, so $u,v\in M$ are orthogonal atoms if and only if $\theta(u),\theta(v)\in N$ are orthogonal atoms. Since orthogonality is preserved under suprema, the extended $\theta$ also preserves orthogonality.

Let $p \in \P(M)$ be nonzero. We claim that $\theta$ induces a bijection from the atoms $u \leq p$ to the atoms $v \leq \theta(p)$. Indeed, if $u \leq p$ is an atom, then $\theta(u) \leq \theta(p)$ by definition of $\theta(p)$. Conversely, let $\theta(u) \leq \theta(p)$ be an atom for some atom $u \in \P(M)$, then we have to show that $u \leq p$. Since $\theta(p) \perp \theta(p^\perp)$, it follows that $\theta(u) \perp \theta(p^\perp)$, and so $\theta(u) \perp \theta(w)$ for all atoms $w \leq p^\perp$. Hence $u \perp w$ for all atoms $w \leq p^\perp$, which implies that $u \perp p^\perp$, as required.

Let $\xi\colon \P(N) \to \P(M)$ be defined by $\xi(q):=\bigvee\{\theta^{-1}(v)\colon v\mbox{ an atom with }v\le q\}$. It follows from the claim that
\[
\xi\circ\theta(p)=\bigvee\{\theta^{-1}(v)\colon v\mbox{ an atom with }v\le \theta(p)\}=\bigvee\{u\colon u\mbox{ an atom with }u\le p\}=p.
\] 
Similarly, we find that $\theta\circ\xi(q)=q$ for all nonatomic and nontrivial $q\in\P(N)$, so $\xi=\theta^{-1}$. We conclude that $\theta$ is a bijection such that $\theta$ and $\theta^{-1}$ preserve orthogonality, so $\theta$ is an orthoisomorphism.

Let $p\in\P(M)$. Then $L(G_p)$ is a maximal norm closed proper face and so there exists a $q\in\P(N)$ such that $L(G_p) = \eps G_q$. If $u\le p$ and $v\le p^\perp$ are atoms, then $G_{u}\cap-G_{v}=\{\hat{u} - \hat{v} \}$ is an extreme point of $G_p$, so $L$ must map it to an extreme point of $\eps G_q$. By definition of $\theta$ on the atoms in $\P(M)$, this extreme point is $\eps(G_{\theta(u)}\cap-G_{\theta(v)})$, so $\theta(u)\le q$ and $\theta(v)\le q^\perp$ are atoms. Similarly, every atom $w\le q$ and $s \le q^\perp$ is of the form $w=\theta(u)$ and $s = \theta(v)$ for atoms $u\le p$ and $v\le p^\perp$. Hence
\[
q=\bigvee\{w\colon w\le q\mbox{ an atom}\}=\bigvee\{\theta(u)\colon u\le p\mbox{ an atom}\}=\theta(p).
\]
This together with the fact that $\theta$ is an orthoisomorphism implies that
\[
L(F_{p}-F_{q})=L(G_p\cap-G_q)=L(G_{p})\cap-L(G_{q})= \eps(G_{\theta(p)}\cap-G_{\theta(q)}) = \eps(F_{\theta(p)}-F_{\theta(q)})
\]
for any nontrivial orthogonal projections $p,q\in\P(M)$.
\end{proof}

\section{Jordan structure of variation norm isometries}\label{sec:4}

In this section we will show that the orthoisomorphism induced by an isometric isomorphism $L\colon e_\perp^M\subseteq M_*\to e_\perp^N\subseteq N_*$ form Theorem~\ref{thm:orthoisomorphism_atomic_part} extends to a Jordan isomorphism $J\colon M \to N$. Furthermore, we show that this Jordan isomorphism will characterize the variation norm isometries form $[A]$ to $[B]$ for unital JB-algebras $A$ and $B$. 

\subsection{The Jordan isomorphism induced by the orthoisomorphism $\theta$}

We start by investigating the relation between the orthoisomorphism $\theta$ and the adjoint of the isometric isomorphism $L$.

\begin{lemma}\label{lem:norming_projection_G_p}
Let $M$ be a JBW-algebra and $p\in M$ be a nontrivial projection. Then $[p]\in [M]$ is the unique class of a projection that attains the norm of $G_p \subseteq 2B_{e_\perp}$, i.e., $[p](G_p)=1$.
\end{lemma}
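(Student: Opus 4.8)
The plan is to split the statement into an \emph{existence} part (that $[p]$ itself has the claimed property) and a \emph{uniqueness} part (that no other projection class does), working throughout with the pairing of $[p] \in [M] = (e^\perp)^*$ against $e^\perp \subseteq M_*$ given by evaluation, $[p](\rho) = \rho(p)$. This is well defined on classes because $\rho(e) = 0$ for $\rho \in e^\perp$, and the ambient norm is the variation norm with $\norm{[p]}_v = \diam\sigma(p) = 1$, since a nontrivial projection has spectrum $\{0,1\}$. For existence I would simply evaluate: any element of $G_p = F_p - F_{p^\perp}$ has the form $\varphi - \psi$ with $\varphi \in F_p$ and $\psi \in F_{p^\perp}$, whence $\varphi(p) = 1$ and $\psi(p) = \psi(e) - \psi(p^\perp) = 0$, so $[p](\varphi - \psi) = 1$. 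Thus $[p]$ is constantly equal to its norm $\norm{[p]}_v = 1$ on $G_p$, i.e.\ it attains its norm there.

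For uniqueness, suppose $q$ is a projection with $[q](G_p) = 1$. The crucial observation is that the prescribed value $1$ is the maximum attainable, so it forces extremal behaviour of each factor separately: fixing $\varphi \in F_p$ and $\psi \in F_{p^\perp}$, the identity $\varphi(q) - \psi(q) = 1$ together with $0 \le \varphi(q), \psi(q) \le 1$ (states evaluated against a projection) forces $\varphi(q) = 1$ and $\psi(q) = 0$. As the pair was arbitrary, $\varphi(q) = 1$ for every $\varphi \in F_p$ and $\psi(q) = 0$ for every $\psi \in F_{p^\perp}$.

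I would then convert these pointwise identities into an order relation between $q$ and $p$. From $\varphi(q) = 1 = \norm{\varphi}$ and minimality of the support projection, $s(\varphi) \le q$; taking the supremum over $\varphi \in F_p$ and using $s(F_p) = p$ (recorded at the start of Section~\ref{sec:3.1}) gives $p \le q$. Symmetrically, $\psi(q) = 0$ gives $\psi(q^\perp) = 1$, hence $s(\psi) \le q^\perp$, and $s(F_{p^\perp}) = p^\perp$ then yields $p^\perp \le q^\perp$, i.e.\ $q \le p$. Therefore $q = p$, and in particular $[q] = [p]$.

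The computations are routine; the only real content sits in the uniqueness argument, and its crux is the passage from the single scalar constraint $\varphi(q) - \psi(q) = 1$ to the pointwise conclusions $\varphi(q) = 1$ and $\psi(q) = 0$, after which the support-projection identity $s(F_p) = p$ and the monotonicity of support projections close everything. I do not anticipate a genuine obstacle, but I would be careful about the sign: since $[p^\perp] = -[p]$ in $[M]$, the class $[p^\perp]$ attains the value $-1$ on $G_p$, so it is precisely the normalisation to the value $+1$ (rather than $-1$) that singles out $p$ among the two projections $p$ and $p^\perp$.
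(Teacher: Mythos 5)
Your proof is correct and takes essentially the same approach as the paper: existence by direct evaluation on $G_p = F_p - F_{p^\perp}$, and uniqueness by forcing $\varphi(q)=1$ and $\psi(q^\perp)=1$ for every $\varphi \in F_p$, $\psi \in F_{p^\perp}$, then passing to support projections and taking suprema to get $p \le q$ and $p^\perp \le q^\perp$. The only cosmetic difference is that the paper pairs $[q]$ with $G_p$ via the representative $\tfrac{1}{2}(q-q^\perp)$ and a chain of inequalities, while you evaluate at $q$ directly and extract the same equalities from the scalar constraint $\varphi(q)-\psi(q)=1$.
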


\begin{proof}
It is clear that $[p](G_p)=1$. Suppose that $q\in M$ is a nontrivial projection such that $[q](G_p)=1$. Then for all $\phi \in F_p$ and $\psi \in F_{p^\perp}$, 
\[
1=(\varphi-\psi)({\textstyle\frac{1}{2}}(q-q^\perp))=\varphi({\textstyle\frac{1}{2}}(q-q^\perp))-\psi({\textstyle\frac{1}{2}}(q-q^\perp))\le{\textstyle\frac{1}{2}}\varphi(q)+{\textstyle\frac{1}{2}}\psi(q^\perp)\le 1,
\]
 so $\varphi(q)=\psi(q^\perp)=1$, hence $s(\phi) \le q$ and $s(\psi) \le q^\perp$. By taking suprema over all $\phi \in F_p$ and $\psi \in F_{p^\perp}$ we obtain $p \leq q$ and $p^\perp \leq q^\perp$, from which we conclude that $p=q$.
\end{proof}

\begin{lemma}\label{lem:link_orthomorphism_L*}
Let $M$ and $N$ be JBW-algebras. If 
\[
L\colon e^M_\perp\subseteq M_*\to  e^N_\perp\subseteq N_* 
\]
is an isometric isomorphism, $\eps \in \{\pm 1\}$, and $\theta\colon\P(M)\to\P(N)$ is an orthoisomorphism such that $L(G_p)=\eps G_{\theta(p)}$ for all $p \in \P(M)$, then $[\theta(p)] = \eps (L^{-1})^*[p]$ for all $p \in \P(M)$.
\end{lemma}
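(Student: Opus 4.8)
The plan is to pass to the adjoint picture and reduce the claimed identity to a norming/uniqueness statement governed by Lemma~\ref{lem:norming_projection_G_p}. Since $e_M^\perp$ has codimension one in $M_*$ with annihilator $\Sp(e)$, the restriction map identifies $(e_M^\perp)^* \cong M/\Sp(e) = [M]$ isometrically, the class $[x]$ acting on $\varphi \in e_M^\perp$ by $[x](\varphi) = \varphi(x)$; the same holds for $N$. Under these identifications $(L^{-1})^* = (L^*)^{-1} \colon [M] \to [N]$, so applying the isomorphism $L^* \colon [N] \to [M]$ shows that the assertion $[\theta(p)] = \eps(L^{-1})^*[p]$ is equivalent to $L^*[\theta(p)] = \eps[p]$. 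Writing $w := \eps L^*[\theta(p)] \in [M]$, it therefore suffices to prove $w = [p]$.

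First I would check that $w$ is norming on $G_p$. For $\varphi \in G_p$ we have $L\varphi \in L(G_p) = \eps G_{\theta(p)}$, say $L\varphi = \eps h$ with $h \in G_{\theta(p)}$, and then $w(\varphi) = \eps[\theta(p)](L\varphi) = [\theta(p)](h) = 1$ by Lemma~\ref{lem:norming_projection_G_p}. Hence $w$ takes the value $1$ on all of $G_p$. Moreover $\|w\| = \|[\theta(p)]\| = \tfrac12$, since $L^*$ is an isometry and a nontrivial projection class has norm $\tfrac12\diam\sigma(\theta(p)) = \tfrac12$.

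It remains to deduce $w = [p]$ from these two facts. Choosing a representative $y$ with $w = [y]$, constancy of $(\varphi - \psi)(y) = 1$ as $\varphi$ ranges over $F_p$ and $\psi$ over $F_{p^\perp}$ forces $\varphi(y)$ to be a constant $c_1$ on $F_p$ and $\psi(y)$ a constant $c_2$ on $F_{p^\perp}$ with $c_1 - c_2 = 1$; since states with support under $p$ satisfy $U_p^*\varphi = \varphi$ and separate $U_p(M)$, this gives $U_p y = c_1 p$ and likewise $U_{p^\perp} y = c_2 p^\perp$. The norm condition $\tfrac12\diam\sigma(y) = \tfrac12$ together with $\max\sigma(y) \ge c_1$ and $\min\sigma(y) \le c_2$ forces $\max\sigma(y) = c_1$ and $\min\sigma(y) = c_2$, so that $y' := y - c_2 e$ satisfies $0 \le y' \le e$, $U_p y' = p$, and $U_{p^\perp} y' = 0$. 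Positivity of $y'$ then yields $s(y') \le p$, whence $y' = U_p y' = p$ and $w = [y] = [p]$, as required.

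The main obstacle is precisely this last uniqueness step: Lemma~\ref{lem:norming_projection_G_p} only guarantees uniqueness among classes of projections, whereas $w$ is a priori an arbitrary class, so I must upgrade it to all classes of norm $\tfrac12$. The Peirce decomposition $y = U_p y + y_{1/2} + U_{p^\perp}y$ reduces this to showing the Peirce-$\tfrac12$ part vanishes, which is exactly what the positivity fact ``$a \ge 0$ and $U_{p^\perp} a = 0$ imply $s(a) \le p$'' accomplishes. Everything preceding it is formal bookkeeping with adjoints and the face identity $L(G_p) = \eps G_{\theta(p)}$.
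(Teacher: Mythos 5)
Your proof is correct, but its crucial step runs along a genuinely different line than the paper's. The paper settles the crux --- why the image class is forced to be $[\theta(p)]$ --- by citing \cite[Lemma~4.1]{LRW2}: the extreme points of the unit balls of $[M]$ and $[N]$ are exactly the classes of nontrivial projections, so the isometric isomorphism $\eps(L^{-1})^*$ must send $[p]$ to a \emph{projection} class; the same evaluation identity you compute ($\eps(L^{-1})^*[p](G_{\theta(p)}) = [p](G_p) = 1$) then finishes the proof in one line via the uniqueness-among-projection-classes statement of Lemma~\ref{lem:norming_projection_G_p}. You bypass the extreme-point input entirely and instead prove the stronger uniqueness statement that \emph{any} class $w \in [M]$ of quotient norm $\tfrac12$ taking the value $1$ on $G_p$ equals $[p]$: constancy on $F_p$ and $F_{p^\perp}$, the identities $U_p y = c_1 p$ and $U_{p^\perp}y = c_2 p^\perp$ (valid because states supported under $p$ are $U_p^*$-invariant and separate $U_p(M)$), spectral pinching from $\diam\sigma(y)=1$, and the standard positivity fact that $a \geq 0$ together with $U_{p^\perp}a = 0$ forces $a = U_p a$ (killing the Peirce-$\tfrac12$ part). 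In effect you have proved a strengthened form of Lemma~\ref{lem:norming_projection_G_p} --- uniqueness among all classes of norm $\tfrac12$, not merely among classes of projections --- which is precisely what makes the appeal to \cite[Lemma~4.1]{LRW2} unnecessary. The trade-off: the paper's argument is three lines but leans on an external characterization of the extreme points; yours is self-contained modulo standard JBW-algebra facts (essentially \cite[Propositions~1.38 and~1.41]{AS2}) at the cost of the longer Peirce-decomposition argument. Your choice to work with $L^*[\theta(p)]$ evaluated on $G_p$, rather than $(L^{-1})^*[p]$ evaluated on $G_{\theta(p)}$, is purely cosmetic. One small housekeeping point: your norm computation $\|w\| = \tfrac12$ presupposes $p$ nontrivial (for trivial $p$ both sides of the asserted identity are $0$ and $G_p$ is empty), so, as the paper does, you should dispose of the trivial projections by a separate one-line remark before fixing representatives in $F_p$ and $F_{p^\perp}$.
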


\begin{proof}
The statement clearly holds for the trivial projections, so let $p \in \mathcal{P}(M)$ be nontrivial. The extreme points of the unit balls in $[M]$ and $[N]$ are the equivalence classes of nontrivial projections by \cite[Lemma~4.1]{LRW2} and these must be bijectively mapped onto one another by the isometric isomorphism $\eps (L^{-1})^*$. Hence $\eps (L^{-1})^*[p]$ is the class of a nontrivial projection in $M$. Moreover
\[
\eps (L^{-1})^*[p](G_{\theta(p)}) = [p](L^{-1} (\eps G_{\theta(p)})) = [p](G_p) = 1,
\]
so it follows from Lemma~\ref{lem:norming_projection_G_p} that $\eps (L^{-1})^*[p] = [\theta(p)]$.
\end{proof}

As already shown in \cite[Section~4.3]{LRW2}, the above orthoisomorphism can now be extended to a Jordan isomorphism. For the convenience of the reader we will include the proof, which is in the appendix since dealing with the type $I_2$ part is long and technical.

\begin{theorem}\label{thm:Jordan_isomorphism_from_theta}
Let $M$ and $N$ be JBW-algebras. If 
\[
L\colon e^M_\perp\subseteq M_*\to e^N_\perp\subseteq N_* 
\]
is an isometric isomorphism, $\eps \in \{\pm 1\}$, and $\theta\colon\P(M)\to\P(N)$ is an orthoisomorphism such that $L(G_p)=\eps G_{\theta(p)}$ for all $p \in \P(M)$, then $\theta$ extends to a Jordan isomorphism $J\colon M \to N$. 
\end{theorem}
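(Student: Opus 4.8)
The plan is to split $M$ along its type $I_2$ direct summand and treat the two pieces by completely different means, reflecting the fact that Dye's rigidity genuinely fails for spin factors. Write $M = zM \oplus z^\perp M$, where $z$ is the central projection onto the type $I_2$ part, and likewise $N = z'N \oplus z'^\perp N$. The first step is to check that $\theta$ respects this decomposition, i.e.\ $\theta(z)=z'$. This holds because the type $I_2$ blocks are exactly the spin factors, and by Lemma~\ref{lem:equivalences_R2_and_SF} a block is a spin factor precisely when it contains an atom $u$ with $u^\perp$ again an atom together with a further atom distinct from $u$ and $u^\perp$; this is an orthogonality/lattice property, hence preserved by the orthoisomorphism $\theta$. (The degenerate case $\R^2$, which also has atomic complements but is abelian of type $I_1$, is absorbed into $z^\perp M$.) Consequently $\theta$ restricts to orthoisomorphisms $\P(z^\perp M)\to\P(z'^\perp N)$ and $\P(zM)\to\P(z'N)$, and it suffices to produce Jordan isomorphisms on each summand and form their direct sum.

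On the summand $z^\perp M$, which by construction has no type $I_2$ direct summand, I would invoke the Bunce--Wright generalisation of Dye's theorem: by \cite[Corollary~2]{BW} the orthoisomorphism $\theta|_{\P(z^\perp M)}$ extends uniquely to a Jordan isomorphism $J_0 \colon z^\perp M \to z'^\perp N$. This step is essentially a citation and uses only that $\theta$ is a genuine orthoisomorphism, which we already have.

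The type $I_2$ summand is the heart of the matter, and here $\theta$ alone is insufficient: in a spin factor every atom is orthogonal only to its own complement, so an orthoisomorphism is almost unconstrained and need not be affine on the sphere of atoms. The extra rigidity must come from $L$. Using Lemma~\ref{lem:link_orthomorphism_L*} together with the identification $(e_M^\perp)^* \cong [M]$ from Lemma~\ref{lem:(A/e)'=e_perp}, the map $\eps (L^{-1})^*$ is a linear surjective isometry restricting to $[zM] \to [z'N]$ and satisfying $\eps (L^{-1})^*[p] = [\theta(p)]$ on projection classes. By Stacey's structure theorem \cite{S} for type $I_2$ JBW-algebras, $zM$ is a spin factor over its center $Z$, i.e.\ $zM = Ze \oplus H$ with $H$ a self-dual Hilbert $Z$-module carrying the spin product; the key point is that by \eqref{eq:var_is_quotient} the variation seminorm of $\alpha e + h$ is exactly $2\norm{h}$, so $[zM]$ is isometric to $H$ with twice its Hilbert-module norm, and the atoms correspond to the center-valued unit vectors via $h \mapsto [\tfrac12(e+h)]$. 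Since $\eps (L^{-1})^*$ is linear, isometric, and carries unit vectors (the classes of atoms) bijectively to unit vectors by the displayed identity, polarization forces it to be an orthogonal $Z$-module isomorphism $T \colon H \to H'$, the centers being identified through $\theta$. Reassembling $J_2(\alpha e + h) := \alpha e + Th$ (composed with the induced center isomorphism) yields a Jordan isomorphism $zM \to z'N$, and by construction it induces $\theta$ on atoms, hence on all projections of $zM$.

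Finally, $J := J_0 \oplus J_2 \colon M \to N$ is a Jordan isomorphism with $J|_{\P(M)} = \theta$, as required. I expect the type $I_2$ step to be the main obstacle: one must simultaneously extract both the center isomorphism and the orthogonal transformation from the single linear datum $L$ via Stacey's representation, and verify that the map so obtained agrees with the purely combinatorial orthoisomorphism $\theta$ — which is precisely what Lemma~\ref{lem:link_orthomorphism_L*} is designed to guarantee.
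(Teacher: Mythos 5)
Your skeleton matches the paper's proof: split off the type $I_2$ summand, invoke Bunce--Wright \cite[Corollary~2]{BW} on the complement, and use the linearity of $L$ (via Lemma~\ref{lem:link_orthomorphism_L*}) to supply the rigidity that the orthoisomorphism alone lacks on the type $I_2$ part. But your execution of the type $I_2$ step contains a genuine gap: you silently assume that the type $I_2$ summand is a \emph{single} spin factor, i.e.\ that its center is $\R e$. Stacey's theorem gives $zM \cong \bigoplus_k L^\infty(\Omega_k,V_k)$, so the center $Z$ is typically huge, and then both of your key identifications fail. First, the variation seminorm of $\alpha + h$ (with $\alpha \in Z$, $h \in H$) is the fiberwise quantity $\esssup(\alpha + \langle h,h\rangle^{1/2}) - \mathrm{ess\,inf}(\alpha - \langle h,h\rangle^{1/2})$, not $2\norm{h}$: a nontrivial central projection $\chi_A$ has zero $H$-component but variation norm $1$. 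So $[zM] = zM/\Sp(e)$ is \emph{not} isometric to $H$; as a vector space it is $(Z/\Sp(e)) \oplus H$, and the quotient only kills the one-dimensional span of the unit, not the center. Second, the elements $\tfrac12(e+h)$ with $\langle h,h\rangle = e$ are rank-one-a.e.\ projections but are not atoms unless $Z$ is trivial (multiplying by a central projection gives a strictly smaller nonzero subprojection), and if $Z$ is nonatomic there are no atoms at all. Consequently there is no Hilbert-space structure on $[zM]$ for your polarization argument to act on, and the map $T$ you want does not come out of it. The same issue infects your first step: detecting the type $I_2$ central projection by the existence of atoms $u$ with $u^\perp$ an atom fails for nonatomic algebras such as $L^\infty([0,1],V_3)$, which is type $I_2$ but atomless; the correct (and lattice-theoretic) route is that $\theta$ preserves operator commutativity and abelianness of projections, hence central covers and the type decomposition.

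The paper's proof shows what is actually needed to close this gap. One builds a Jordan isomorphism $T \colon Z(M_2) \to Z(N_2)$ of the centers from $\theta|_{\P(Z(M_2))}$, writes each $x \in M_2$ canonically as $x = \alpha + \beta p + \gamma p^\perp$ with $\alpha,\beta,\gamma$ central and $p$ a.e.\ rank one, and defines $J_2(x) := T\alpha + T\beta\,\theta(p) + T\gamma\,\theta(p)^\perp$. Linearity of $J_2$ is then not obtained by polarization but in two stages: first one proves $[J_2] = \eps (L^{-1})^*$ on $[M_2]$ by checking it on step functions (using Lemma~\ref{lem:link_orthomorphism_L*}) and passing to limits; second --- and this is the mechanism entirely absent from your proposal --- one lifts linearity from the quotient $[M_2]$ back to $M_2$ by splitting $M_2 = M_0 \oplus \Sp(e_2)$ along the kernel $M_0$ of a normalized trace functional $\mathrm{tr}\otimes T^*\phi$, verifying $J_2(M_0) \subseteq N_0$, so that $J_2|_{M_0}$ is conjugate to the linear map $[J_2]$ via the quotient isomorphisms. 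Finally $J_2$ is a unital surjective isometry, hence a Jordan isomorphism. Your polarization idea is correct precisely in the special case where $zM$ is one spin factor, but the theorem as stated (and as used, since $(A^{**})_a$ may have a type $I_2$ part that is an infinite direct sum of spin factors) requires handling a nontrivial center, and that is where your argument breaks down.
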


\subsection{Characterization of variation norm isometries}

We are now ready to characterize the surjective variation norm isometries between JB-algebras. 

\begin{theorem}\label{thm:char_var_isoms}
If $A$ and $B$ are unital JB-algebras, then $S \colon [A] \to [B]$ is a surjective linear variation norm isometry if and only if 
\begin{equation}\label{eq:char_var_isoms}
S[x] =  \eps [J][x] \quad \text{for all } x \in A
\end{equation}
for some $\eps \in \{\pm 1\}$ and a Jordan isomorphism $J \colon A \to B$.
\end{theorem}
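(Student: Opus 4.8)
I would first dispose of the backward implication, which is routine. A Jordan isomorphism $J\colon A\to B$ is automatically unital and maps $\mathrm{JB}(x,e_A)$ isomorphically onto $\mathrm{JB}(J(x),e_B)$ while preserving invertibility, so $\sigma(J(x))=\sigma(x)$ and $J$ preserves the variation seminorm. Hence $[J]$ is a well-defined linear bijection $[A]\to[B]$ with $\|[J][x]\|_v=\diam\sigma(x)=\|[x]\|_v$, and since $\diam(\eps\sigma(x))=\diam\sigma(x)$ for $\eps\in\{\pm1\}$, the map $S=\eps[J]$ is a surjective variation norm isometry.

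For the forward implication the plan is to transport $S$ to the preduals of the atomic biduals and then feed it through the machinery of Sections~\ref{sec:3} and~\ref{sec:4}. By \eqref{eq:var_is_quotient} the variation norm is twice the quotient norm, so $S$ is an isometric isomorphism for the quotient norms; Lemma~\ref{lem:X'} then makes $S'$ an isometric isomorphism, and by the remark following it $S$ extends to an isometric isomorphism $S'^*\colon[A]'^*\to[B]'^*$. Invoking Lemma~\ref{lem:(A/e)'=e_perp} and the identification $[A]'^*\cong[(A^{**})_a]$, I would regard $S'^*$ as an extension of $S$ to a map $[(A^{**})_a]\to[(B^{**})_a]$, where $A$ and $B$ sit inside the atomic JBW-algebras $M:=(A^{**})_a$ and $N:=(B^{**})_a$ as unital subalgebras. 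Setting $L:=(S^{-1})'=(S')^{-1}\colon e_M^\perp\to e_N^\perp$, I would run $L$ through the pipeline: Lemma~\ref{lem:sign_theta} produces $\eps\in\{\pm1\}$ and a sign-fixed map $\theta$ on atoms with $L(G_u)=\eps G_{\theta(u)}$, Theorem~\ref{thm:orthoisomorphism_atomic_part} upgrades $\theta$ to an orthoisomorphism $\P(M)\to\P(N)$, and Theorem~\ref{thm:Jordan_isomorphism_from_theta} extends it to a Jordan isomorphism $J\colon(A^{**})_a\to(B^{**})_a$.

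The next step is to pin down $[J]$ explicitly. Since $J$ restricts to $\theta$ on projections, Lemma~\ref{lem:link_orthomorphism_L*} gives $[J][p]=[\theta(p)]=\eps(L^{-1})^*[p]$ for every projection $p\in M$. Both $[J]$ and $\eps(L^{-1})^*$ are bounded and linear, and by the spectral theorem the span of projections is norm-dense in $M$, so the classes $[p]$ have dense linear span in $[M]$; I would conclude $[J]=\eps(L^{-1})^*$ on all of $[M]$. As $L=(S')^{-1}$ we have $(L^{-1})^*=(S')^*=S'^*$, so this reads $S'^*=\eps[J]$.

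The final and most delicate step is to descend from the atomic biduals back to $A$ and $B$. Because $S'^*$ extends $S$, for $x\in A$ I obtain $S[x]=\eps[J(x)]$. Since $S$ maps into $[B]$ and $B$ is a unital subalgebra of $(B^{**})_a$, the membership $[J(x)]\in[B]\subseteq[(B^{**})_a]$ forces $J(x)\in B+\Sp(e_B)=B$, whence $J(A)\subseteq B$. Applying the identical argument to $S^{-1}$, whose extension is $(S'^*)^{-1}=\eps[J^{-1}]$, yields $J^{-1}(B)\subseteq A$, so $J(A)=B$ and $J$ restricts to a Jordan isomorphism $J|_A\colon A\to B$ with $S=\eps[J|_A]$, giving \eqref{eq:char_var_isoms}. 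I expect the real difficulty to lie not in this bookkeeping but upstream in Theorem~\ref{thm:Jordan_isomorphism_from_theta}, namely in showing $\theta$ extends to a \emph{linear} Jordan isomorphism on the type $I_2$ summand; the present argument merely assembles those ingredients while carefully tracking the embeddings $A\hookrightarrow(A^{**})_a$ and $B\hookrightarrow(B^{**})_a$.
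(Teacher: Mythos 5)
Your proposal is correct and follows essentially the same route as the paper's proof: pass to the adjoint acting between the preduals $e^\perp$ of the atomic biduals, run it through Lemma~\ref{lem:sign_theta}, Theorem~\ref{thm:orthoisomorphism_atomic_part}, Theorem~\ref{thm:Jordan_isomorphism_from_theta} and Lemma~\ref{lem:link_orthomorphism_L*}, identify $[J]$ with $\eps S'^*$ using norm density of the span of projection classes, and then restrict $J$ to $A$. The only difference is cosmetic: where you assert that $S'^*$ extends $S$ under the identification $[A]'^*\cong[(A^{**})_a]$, the paper verifies this compatibility explicitly by evaluating against differences of orthogonal pure states and invoking the Krein--Milman theorem, while your surjectivity argument via $S^{-1}$ spells out a detail the paper leaves implicit.
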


\begin{proof}
The adjoint of $S$ yields an isometric isomorphism $S' \colon e^\perp_B \to e^\perp_A$ and after multiplying $S'$ by $\eps\in\{\pm 1\}$, we obtain an orthoisomorphism $\theta\colon\P((A^{**})_a) \to \P((B^{**})_a)$ by Theorem~\ref{thm:orthoisomorphism_atomic_part}. 
Lemma~\ref{lem:link_orthomorphism_L*} and Theorem~\ref{thm:Jordan_isomorphism_from_theta} in turn imply that $\theta$ extends to a Jordan isomorphism $J \colon (A^{**})_a \to (B^{**})_a$ and $[\theta(p)]=\eps S'^*[p]$ for all $p\in (A^{**})_a$. 
The corresponding quotient map $[J]\colon [(A^{**})_a]\to [(B^{**})_a]$ satisfies $[J][p]:=[Jp]=\eps S'^*[p]$ for all $p\in (A^{**})_a$ and so $[J]=\eps S'^*$ on the equivalence classes of the projections in $[(A^{**})_a]$. As the variation norm is dominated by twice the JB-norm, the linear span of the equivalence classes of the projections is variation norm dense in $[(A^{**})_a]$ by the spectral theorem. Hence $[J]=\eps S'^*|_{[(A^{**})_a]}$. 

If $\phi$ is a pure state on $A$, then its support projection is an atom and hence contained in $(A^{**})_a$. From this it follows easily that if $x \in A$, then the value of $\phi(x)$ is independent of whether we consider $x$ as an element of $A$ or $(A^{**})_a$. Therefore, for $\varphi-\psi\in\mathrm{ext}(B_{e^\perp})$ we find that
\[
(\varphi-\psi)(\eps S[x]) = \eps S'(\varphi-\psi)([x]) = \eps S'^*[x](\varphi-\psi) = (\varphi-\psi)([J][x])
\] 
for all $[x]\in [A]$. So the Krein-Milman theorem implies that $\eps S=[J]$ on $[A]$. Since we can view $A$ and $B$ as subalgebras of $(A^{**})_a$ and $(B^{**})_a$, respectively, the restriction $J|_A\colon A\to B$ is a Jordan isomorphism.

Conversely, any map of the form \eqref{eq:char_var_isoms} is obviously a variation norm isometry.
\end{proof}

The following argument now shows that Theorem~\ref{thm:char_var_isoms} implies Hamhalter's characterization.

\begin{lemma}\label{lem:ham_char}
If $A$ and $B$ are unital JB-algebras, then $T \colon A \to B$ is a surjective linear variation norm isometry if and only if 
\[ 
Tx =  \eps Jx + \phi(x)e \quad \text{for all } x \in A
\]
for some $\eps \in \{\pm 1\}$, a Jordan isomorphism $J \colon A \to B$ and a linear functional $\phi \colon A \to \R$.
\end{lemma}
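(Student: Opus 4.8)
The plan is to deduce this statement directly from Theorem~\ref{thm:char_var_isoms} by passing to the quotient by $\Sp(e)$ and back, so that all the real work is carried by the quotient characterization. The key observation is that the variation seminorm $\norm{\cdot}_v$ has kernel exactly $\Sp(e)$ on any unital JB-algebra (since $\sigma(x+\lambda e)=\sigma(x)+\lambda$, adding a multiple of $e$ translates the spectrum and leaves its diameter fixed), so $\norm{\cdot}_v$ descends to a genuine norm on $[A]$ and a linear map preserving $\norm{\cdot}_v$ must respect this kernel.

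First I would treat the forward direction. Suppose $T\colon A\to B$ is a surjective linear variation norm isometry. If $x\in\Sp(e_A)$, then $\norm{x}_v=0$, hence $\norm{Tx}_v=0$, so $Tx\in\Sp(e_B)$; thus $T(\Sp(e_A))\subseteq\Sp(e_B)$ and $T$ descends to a well-defined linear map $S\colon[A]\to[B]$, $S[x]:=[Tx]$, which again preserves $\norm{\cdot}_v$ and hence the quotient norm. This $S$ is surjective because $T$ is, and injective because $S[x]=0$ forces $Tx\in\Sp(e_B)$, i.e.\ $\norm{x}_v=\norm{Tx}_v=0$, i.e.\ $[x]=0$. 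Thus $S$ is a surjective linear variation norm isometry on the quotients, and Theorem~\ref{thm:char_var_isoms} yields $\eps\in\{\pm1\}$ and a Jordan isomorphism $J\colon A\to B$ with $S=\eps[J]$.

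Next I would lift this back to $T$. From $[Tx]=S[x]=\eps[Jx]$ we get $Tx-\eps Jx\in\Sp(e_B)$ for every $x$, so there is a unique scalar $\phi(x)$ with $Tx-\eps Jx=\phi(x)e_B$; linearity of $T$ and $J$ makes $\phi\colon A\to\R$ linear, and hence $Tx=\eps Jx+\phi(x)e_B$, as required. For the converse, any $T$ of this form is a variation norm isometry: adding a multiple of $e$ leaves the spectrum translated (so $\norm{\cdot}_v$ is unchanged), $|\eps|=1$, and a Jordan isomorphism preserves the spectrum, giving $\norm{Tx}_v=\norm{\eps Jx}_v=\norm{Jx}_v=\norm{x}_v$.

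The only point that needs care is surjectivity in the converse direction, and I expect this bookkeeping to be the main (indeed the sole) subtlety, since the substance of the lemma lies entirely in Theorem~\ref{thm:char_var_isoms}. Writing $R:=\eps J$, a linear bijection with $R^{-1}e_B=\eps e_A$, solving $Rx+\phi(x)e_B=y$ reduces, after applying $\phi$, to $\phi(x)\bigl(1+\eps\phi(e_A)\bigr)=\eps\phi(J^{-1}y)$, which is solvable for every $y$ precisely when $1+\eps\phi(e_A)\neq0$, and in that case $T$ is in fact bijective. So the honest formulation is that a map of this form is always a variation norm isometry and is surjective exactly when $\phi(e_A)\neq-\eps$; the surjectivity hypothesis in the statement forces this condition, matching the constraint that necessarily emerges in the forward direction.
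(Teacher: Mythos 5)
Your proof of the forward implication is, up to the level of detail, the paper's own proof: the paper likewise notes that $T$ leaves $\Sp(e)$ invariant, applies Theorem~\ref{thm:char_var_isoms} to the induced quotient map to get $[T]=\eps[J]$, and reads off $\phi$ from the fact that $T-\eps J$ maps $A$ into $\Sp(e)$. On that direction there is nothing to add; both arguments are correct and identical in structure.

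Where you depart from the paper is the converse, and there your extra care exposes a genuine slip in the paper itself. The paper's proof ends with the claim that any $T$ of the stated form is ``obviously a surjective linear variation norm isometry,'' but your computation shows this is false in the degenerate case $\phi(e_A)=-\eps$: then $Te_A=\eps e_B+\phi(e_A)e_B=0$, and applying $\phi$ after $\eps J^{-1}$ to the equation $Tx=y$ forces $\phi(J^{-1}y)=0$, so $\mathrm{ran}\,T=J(\ker\phi)$ is a proper subspace of $B$ (in particular it misses $e_B$). Such a $T$ preserves the variation seminorm but is not surjective, so the ``if'' half of Lemma~\ref{lem:ham_char} as printed is too strong. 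Your reformulation is the correct one: a map of the stated form is always a linear variation norm isometry, and it is surjective (indeed bijective) precisely when $\eps+\phi(e_A)\neq 0$; this side condition is automatically satisfied by the functional $\phi$ produced in the forward direction, since a surjective $T$ cannot annihilate $e_A$. So your argument is not only correct, it identifies a hypothesis that the lemma's ``if and only if'' silently requires, which the paper's one-line dismissal of the converse overlooks.
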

\begin{proof}
Let $T \colon A \to B$ be a surjective linear variation norm isometry. Then $T$ leaves $\Sp(e)$ invariant, and so $[T] = \eps [J]$ for some $\eps \in \{\pm 1\}$ and some Jordan isomorphism $J \colon A \to B$ by Theorem~\ref{thm:char_var_isoms}. Hence $[T-\eps J] =0$, showing that $T- \eps J$ maps $A$ into $\Sp(e)$. Therefore
\[ T = \eps J + (T - \eps J) = \eps J + \phi \otimes e \]
for some  linear functional $\phi \colon A \to \R$, as required.

Conversely, any $T$ of the above form is obviously a surjective linear variation norm isometry.
\end{proof}

If $A$ and $B$ are JB-algebras without a unit, then the variation norm is a genuine norm on $A$ and $B$. If $S\colon A\to B$ is a surjective variation norm isometry and $A_e$ and $B_e$ are the unitizations of $A$ and $B$ respectively, then $S$ extends to  $\tilde{S}\colon A_e \to B_e$ by putting $\tilde{S}(a+\lambda e):=Sa+\lambda e$. By construction, $\tilde{S}$ induces the quotient map $[\tilde{S}]\colon [A_e]\to [B_e]$ which is a variation norm isometry.  Since $\pi_A\colon [A_e] \to A$ and $\pi_B\colon [B_e] \to B$ defined by $[x]\mapsto x$ are surjective linear variation norm isometries, it follows that $S=\pi_B\circ [\tilde{S}]\circ \pi_A^{-1}$ and this yields the following characterization for variation norm isometries on nonunital JB-algebras.

\begin{corollary}\label{cor:char_var_isoms_nonunital}
If $A$ and $B$ are nonunital JB-algebras, then $S \colon A \to B$ is a surjective linear variation norm isometry if and only if 
\begin{equation*}
Sx =  \eps Jx \quad \text{for all } x \in A
\end{equation*}
for some $\eps \in \{\pm 1\}$ and a Jordan isomorphism $J \colon A \to B$.
\end{corollary}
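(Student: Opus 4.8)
The plan is to reduce the nonunital statement to the unital characterization in Theorem~\ref{thm:char_var_isoms}, exactly along the lines of the discussion preceding the corollary. Passing to the unitizations $A_e$ and $B_e$, the extension $\tilde S(a + \lambda e) := Sa + \lambda e$ is a variation seminorm isometry that fixes $e$ and maps $A$ into $B$; since it sends $\Sp(e)$ to $\Sp(e)$, it descends to a surjective linear variation norm isometry $[\tilde S]\colon [A_e] \to [B_e]$. First I would record that the maps $\pi_A\colon [A_e]\to A$ and $\pi_B\colon [B_e]\to B$ sending a class to its unique representative in $A$ (resp.\ $B$) are surjective linear variation norm isometries, which is immediate from $\|a+\lambda e\|_v = \|a\|_v$, so that $S = \pi_B\circ [\tilde S]\circ \pi_A^{-1}$. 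Applying Theorem~\ref{thm:char_var_isoms} to $[\tilde S]$ then produces $\eps \in \{\pm 1\}$ and a Jordan isomorphism $J_e\colon A_e \to B_e$ with $[\tilde S] = \eps [J_e]$.

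Next I would unwind this identity on the level of the algebras. Writing $P_B\colon B_e \to B$ for the projection along $\Sp(e)$, the relation $[\tilde S] = \eps[J_e]$ together with $S = \pi_B\circ [\tilde S]\circ \pi_A^{-1}$ gives $Sa = \eps\,P_B(J_e a)$ for every $a \in A$. Thus, setting $J := P_B\circ J_e|_A\colon A \to B$, we obtain $S = \eps J$, and the only point left is that $J$ is a Jordan isomorphism. Since $J_e$ is a unital Jordan isomorphism, a short computation shows that, writing $\alpha(a)$ for the $\Sp(e)$-component of $J_e a$, one has $J(a\circ a') = Ja\circ Ja' + \alpha(a')Ja + \alpha(a)Ja'$; hence $J$ is multiplicative precisely when $\alpha \equiv 0$, i.e.\ precisely when $J_e(A)\subseteq B$. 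In that case $P_B$ restricts to the identity on $J_e(A)$ and $J = J_e|_A$ is a Jordan isomorphism onto $B$.

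Consequently the main obstacle is to show that the Jordan isomorphism $J_e$ of the unitizations carries the augmentation ideal $A$ onto $B$, equivalently that $\chi_B\circ J_e = \chi_A$ for the canonical characters $\chi_A,\chi_B$ of $A_e,B_e$ with kernels $A,B$. The natural strategy is to exploit the extra rigidity of $\tilde S$ that is discarded on passing to the quotient: $\tilde S$ not only descends to $\eps[J_e]$, but itself respects the decompositions $A_e = A \oplus \Sp(e)$ and $B_e = B \oplus \Sp(e)$ and fixes $e$. I would try to transfer this splitting-compatibility of $\tilde S$ to $J_e$, for instance by comparing $\chi_B\circ J_e$ with $\chi_A$ directly. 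I expect this to be the delicate step, since a priori a unitization may carry several characters, and hence several codimension-one ideals, so that $J_e$ need not preserve $A$ for purely algebraic reasons; the argument must genuinely use that $\tilde S$ preserves the augmentation and not merely that its quotient agrees with $\eps[J_e]$.
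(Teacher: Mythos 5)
Your first paragraph coincides with the paper's entire proof --- the paper records exactly the factorization $S=\pi_B\circ[\tilde S]\circ\pi_A^{-1}$ and then simply asserts the corollary --- and your later paragraphs correctly isolate the step this argument silently assumes: the Jordan isomorphism $J_e\colon A_e\to B_e$ coming from Theorem~\ref{thm:char_var_isoms} must carry $A$ onto $B$, equivalently $\chi_B\circ J_e=\chi_A$ (where $\chi_A,\chi_B$ are the characters with kernels $A,B$), since otherwise $a\mapsto J_e a-\chi_B(J_e a)e$ fails to be multiplicative, exactly as your computation shows. You leave this step open, so the proposal is not a proof. Moreover, the rescue you suggest cannot work: $S\mapsto[\tilde S]$ is a bijection between surjective variation norm isometries $A\to B$ and surjective variation norm isometries $[A_e]\to[B_e]$, because for any isometry $V$ of the quotients the map $\pi_B\circ V\circ\pi_A^{-1}$ is an isometry $A\to B$ whose extension induces $V$. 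Hence the fact that $\tilde S$ respects the splittings $A_e=A\oplus\Sp(e)$ and $B_e=B\oplus\Sp(e)$ and fixes $e$ carries no information beyond $[\tilde S]$ itself; there is no extra rigidity left to transfer to $J_e$.

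In fact the missing step is false, and with it the corollary as stated (reading the variation norm on a nonunital algebra as $\diam\sigma$ computed in the unitization, which is the reading forced by the claim that $\pi_A$ and $\pi_B$ are isometries). Let $A=B=C_0(\R)$, so that $A_e\cong C(\Omega)$ with $\Omega=\R\cup\{\infty\}$ the one-point compactification, and let $h\colon\Omega\to\Omega$ be a homeomorphism with $p:=h^{-1}(\infty)\in\R$. Then $J_e(f):=f\circ h^{-1}$ is a Jordan isomorphism of $A_e$ that does not preserve $A$, and $T:=\pi_B\circ[J_e]\circ\pi_A^{-1}$, explicitly $Ta=a\circ h^{-1}-a(p)\mathbf{1}$, is a surjective linear variation norm isometry of $C_0(\R)$. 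But $T\neq\eps J$ for every Jordan isomorphism $J$ and every $\eps\in\{\pm 1\}$: Jordan isomorphisms preserve squares and hence cones, whereas a bump function $a$ with $0\le a\le 1$ and $a(p)=1$ gives $Ta=(a-\mathbf{1})\circ h^{-1}\le 0$ with $Ta\neq 0$, while $a\ge 0$ with $a(p)=0$ and $a\neq 0$ gives $Ta=a\circ h^{-1}\ge 0$ with $Ta\neq 0$, so neither $T$ nor $-T$ is positive. Thus the obstruction you identified is not merely delicate, it is fatal, both to your outline and to the paper's own proof of Corollary~\ref{cor:char_var_isoms_nonunital}. What the reduction actually yields is the weaker statement that every surjective variation norm isometry has the form $Sa=\eps\left(J_e a-\chi_B(J_e a)e\right)$ for some Jordan isomorphism $J_e\colon A_e\to B_e$ of the unitizations; the corollary as printed holds exactly for those $S$ whose associated $J_e$ happens to preserve the distinguished ideals.
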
 

\section{Hilbert's metric isometries on JB-algebras}\label{sec:5}

We can now prove the following characterization of the surjective Hilbert's metric isometries between cones in unital JB-algebras. 

\begin{theorem}\label{thm:char_Hilbert_isoms}
If $A$ and $B$ are unital JB-algebras, then $f\colon\ol{A}^\circ_+\to\ol{B}^\circ_+$ is a surjective Hilbert's metric isometry if and only if 

\begin{equation}\label{eq:Hilbert_isoms}
f(\ol{x})=\ol{U_y J(x^\eps)}\quad\mbox{for all }\ol{x}\in\ol{A}^\circ_+,
\end{equation}
where $\eps\in\{\pm 1\}$, $y\in A^\circ_+$, and $J\colon A\to B$ is a Jordan isomorphism. In this case $y\in f(\ol{e}_A)^{\frac{1}{2}}$.
\end{theorem}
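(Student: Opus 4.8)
The plan is to reduce the Hilbert's metric problem to the already-established variation norm characterization via the logarithm–exponential correspondence, and then to peel off the quadratic representation. I would begin by recalling from the discussion in Section~\ref{sec:2} that $\log \colon \ol{A}^\circ_+ \to [A]$ and $\exp \colon [A] \to \ol{A}^\circ_+$ are mutually inverse bijections, and from \cite[Theorem~2.17]{LRW2} that if $f$ is a \emph{unital} surjective Hilbert isometry (i.e.\ $f(\ol{e}_A) = \ol{e}_B$), then $S := \log \circ f \circ \exp \colon [A] \to [B]$ is a surjective linear variation norm isometry. Applying Theorem~\ref{thm:char_var_isoms} gives $S = \eps[J]$ for some $\eps \in \{\pm 1\}$ and a Jordan isomorphism $J \colon A \to B$. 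Unwinding this through $\exp$ and $\log$ yields $f(\ol{x}) = \ol{J(x^\eps)}$ for all $\ol{x} \in \ol{A}^\circ_+$, since $\log \ol{x} = [\log x]$, scalar multiples of $\eps$ correspond to raising to the power $\eps$, and $J$ commutes with the continuous functional calculus (so $J(\exp(\log x)) = \exp(J \log x)$ and $J(x^\eps)$ is the correct image). This settles the unital case, which is formula \eqref{eq:Hilbert_isoms} with $y = e_A$ so that $U_y = \mathrm{Id}$.

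For a general surjective Hilbert isometry $f$, I would reduce to the unital case by precomposing with an automorphism. Set $\ol{z} := f(\ol{e}_A) \in \ol{B}^\circ_+$ and pick a representative $z \in B^\circ_+$. The map $U_{z^{1/2}}$ lies in $\Aut(B_+)$ by the discussion preceding Lemma~\ref{lem:equivalences_R2_and_SF}, hence $U_{\ol{z}^{1/2}}$ is a surjective Hilbert isometry on $\ol{B}^\circ_+$; moreover $U_{\ol{z}^{1/2}}$ sends $\ol{e}_B$ to $\ol{z}$ because $U_{z^{1/2}} e_B = z$. Consequently $g := U_{\ol{z}^{1/2}}^{-1} \circ f$ is a surjective Hilbert isometry sending $\ol{e}_A$ to $\ol{e}_B$, i.e.\ a unital one. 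By the previous paragraph $g(\ol{x}) = \ol{J(x^\eps)}$, and therefore
\[
f(\ol{x}) = U_{\ol{z}^{1/2}}\, g(\ol{x}) = \ol{U_{z^{1/2}} J(x^\eps)},
\]
which is \eqref{eq:Hilbert_isoms} with $y := z^{1/2}$. The final assertion $y \in f(\ol{e}_A)^{1/2}$ then follows by evaluating: $f(\ol{e}_A) = \ol{U_{z^{1/2}} J(e_A)} = \ol{U_{z^{1/2}} e_B} = \ol{z}$, so $y = z^{1/2}$ represents $f(\ol{e}_A)^{1/2}$.

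The converse is routine: any map of the form \eqref{eq:Hilbert_isoms} is a composition of a Jordan isomorphism (which is a Hilbert isometry, being an order isomorphism of the cones), the inversion-or-identity map $x \mapsto x^\eps$ (a Hilbert isometry since $d_H(x^{-1}, y^{-1}) = d_H(x,y)$, as $M(x^{-1}/y^{-1}) = M(y/x)$), and the automorphism $U_{\ol{y}}$, each of which preserves $d_H$; so their composite does too.

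The step I expect to be the main obstacle is verifying cleanly that $J$ intertwines the functional calculus well enough to justify $g(\ol{x}) = \ol{J(x^\eps)}$ from the purely linear statement $S = \eps[J]$ at the level of $[A]$ and $[B]$. Concretely, one must check that $\exp$ and $\log$ interact correctly with the sign $\eps$ so that multiplying the linearized map by $-1$ genuinely corresponds to cone inversion $x \mapsto x^{-1}$, and that $J$, being a Jordan isomorphism, commutes with $\exp$ on the relevant domain; this uses that $J$ is an isometric unital order isomorphism and hence respects $\mathrm{JB}(x,e) \cong C(\sigma(x))$. Once this compatibility is nailed down, the rest is bookkeeping with $U_{\ol{z}^{1/2}}$, for which the only care needed is that all identities hold at the level of rays rather than representatives.
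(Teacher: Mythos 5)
Your proposal is correct and follows essentially the same route as the paper: normalize $f$ by the quadratic representation of $f(\ol{e}_A)^{\pm\frac{1}{2}}$ to reduce to the unital case, apply \cite[Theorem~2.17]{LRW2} together with Theorem~\ref{thm:char_var_isoms} to obtain $S=\eps[J]$, and unwind through $\exp$ and $\log$ using that Jordan isomorphisms commute with the functional calculus (in particular with inversion). The only cosmetic difference is that the paper first normalizes and then linearizes, whereas you prove the unital case first and then reduce to it; the substance, including the identification $y\in f(\ol{e}_A)^{\frac{1}{2}}$ and the routine converse, is identical.
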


\begin{proof}
Let $ f \colon \ol{A}_+^\circ \to \ol{B}_+^\circ$ be a surjective Hilbert's metric isometry. Then we can define a new surjective isometry $g \colon \ol{A}_+^\circ \to \ol{B}_+^\circ$ by
\[
g(\ol{x}) := U_{f(\ol{e}_A)^{-\frac{1}{2}}} f(\ol{x})\mbox{\quad for all }\ol{x}\in \ol{A}_+^\circ.
\]
Note that $g(\ol{e}_A)=\ol{e}_B$ and by \cite[Theorem~2.17]{LRW2} the map $S:=\log\circ g \circ \exp \colon [A] \to [B]$ is an isometric isomorphism for the variation norm. Hence $S = \eps [J]$ by Theorem~\ref{thm:char_var_isoms} for some $\eps\in\{\pm 1\}$ and some Jordan isomorphism $J\colon A\to B$. Note that $J$ induces a map from $\ol{A}^\circ_+$ to $\ol{B}_+^\circ$. Let $x \in A_+^\circ$, then $x = \exp(z)$ for some $z \in A$, and so 
\begin{eqnarray*}
g(\ol{x})^\eps  =  \exp([J]\log(\ol{\exp (z)})) = \exp([J][z]) = \exp([Jz]) = \ol{\exp(Jz)} = \ol{J(\exp(z))} = \ol{J x} = J\ol{x}.
\end{eqnarray*}
Thus, 
\[
(U_{f(\ol{e}_A)^{-\frac{1}{2}}} f(\ol{x}))^{\eps} = J \ol{x}\mbox{\quad for all $\ol{x}\in \ol{A}_+^\circ$},
\]
hence
\[
f(\ol{x}) = U_{f(\ol{e}_A)^{\frac{1}{2}}} (J\ol{x})^{\eps} =U_{f(\ol{e}_A)^{\frac{1}{2}}} (\ol{Jx})^{\eps}= U_{f(\ol{e}_A)^{\frac{1}{2}}} \ol{J(x^{\eps})}=\ol{U_yJ(x^\eps)}
\]
for some $y\in f(\ol{e}_A)^{\frac{1}{2}}$. To complete the proof note that any map  of the form (\ref{eq:Hilbert_isoms}) is a surjective Hilbert's metric isometry.
\end{proof}
Theorem~\ref{thm:char_Hilbert_isoms} has the following direct consequence.

\begin{corollary}
For unital JB-algebras $A$ and $B$ the metric spaces $(\ol{A}^\circ_+,d_H)$ and $(\ol{B}^\circ_+,d_H)$ are isometric if and only if $A$ and $B$ are Jordan isomorphic.
\end{corollary}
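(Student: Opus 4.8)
The plan is to read off both implications directly from Theorem~\ref{thm:char_Hilbert_isoms}, since an isometry of the metric spaces $(\ol{A}^\circ_+,d_H)$ and $(\ol{B}^\circ_+,d_H)$ is by definition nothing other than a surjective distance-preserving bijection $f\colon\ol{A}^\circ_+\to\ol{B}^\circ_+$, which is precisely the notion of surjective Hilbert's metric isometry classified there. Thus no translation between frameworks is needed, and all the analytic and Jordan-algebraic content has already been absorbed into that theorem.

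For the forward implication I would assume that $(\ol{A}^\circ_+,d_H)$ and $(\ol{B}^\circ_+,d_H)$ are isometric and fix a witnessing surjective $d_H$-isometry $f\colon\ol{A}^\circ_+\to\ol{B}^\circ_+$. Applying Theorem~\ref{thm:char_Hilbert_isoms} produces $\eps\in\{\pm 1\}$, an element $y\in B^\circ_+$, and a Jordan isomorphism $J\colon A\to B$ with $f(\ol{x})=\ol{U_y J(x^\eps)}$. The mere existence of $J$ already witnesses that $A$ and $B$ are Jordan isomorphic, which is all that is claimed.

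For the reverse implication I would start from a Jordan isomorphism $J\colon A\to B$ and exhibit an explicit isometry by specialising \eqref{eq:Hilbert_isoms}. Taking $\eps=1$ and $y=e_B$, the quadratic representation $U_{e_B}$ is the identity on $B$, so the formula collapses to $f(\ol{x})=\ol{Jx}$. By the converse (``if'') direction of Theorem~\ref{thm:char_Hilbert_isoms}, this $f$ is a surjective Hilbert's metric isometry, and hence the two metric spaces are isometric.

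There is essentially no obstacle here, as this is a direct corollary. The only point meriting a moment's care is the identification, in the forward direction, of an abstract metric-space isometry with a surjective $d_H$-isometry in the sense of Theorem~\ref{thm:char_Hilbert_isoms}; this is immediate because both domains are literally the ray spaces $\ol{A}^\circ_+$ and $\ol{B}^\circ_+$ equipped with $d_H$.
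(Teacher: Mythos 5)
Your proposal is correct and matches the paper's intent exactly: the paper states this corollary as a direct consequence of Theorem~\ref{thm:char_Hilbert_isoms} with no further argument, and your two directions (extracting $J$ from the characterization, and conversely taking $\eps=1$, $y=e_B$ so that $f(\ol{x})=\ol{Jx}$ is an isometry by the ``if'' part) are precisely the intended reading.
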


Next, we will describe the isometry group $\mathrm{Isom}(\ol{A}_+^\circ)$ consisting of all surjective Hilbert's metric isometries on $\ol{A}_+^\circ$. A map $\tau\colon\ol{A}_+^\circ\to\ol{A}_+^\circ$ that is of the form $\tau(\ol{x})=\ol{Tx}$ where $T\in\mathrm{Aut}(A_+)$ is called a \textit{projectivity} of $A_+$ and the collection of projectivities $\mathrm{Proj}(A_+)$ form a subgroup of $\mathrm{Isom}(\ol{A}_+^\circ)$. By \cite[Proposition~2.3]{LRW2} these projectivities are of the form $\tau(\ol{x})=\ol{U_yJx}$ for some $y\in A_+^\circ$ and a Jordan isomorphism $J$. Moreover, the subgroup $C_2$ of order two which is generated by the inversion map $\iota$ acts on $\mathrm{Proj}(A_+)$ via conjugation as
\begin{equation*}
(\iota\circ\tau\circ\iota)(\ol{x})=\ol{U_yJx^{-1}}^{-1}=\ol{U_{y^{-1}}(Jx^{-1})^{-1}}=\ol{U_{y^{-1}}Jx},
\end{equation*}
so $\iota\circ\tau\circ\iota\in\mathrm{Proj}(A_+)$ and hence $\mathrm{Proj}(A_+)$ is a normal subgroup of $\mathrm{Isom}(\ol{A}_+^\circ)$. By Theorem~\ref{thm:char_Hilbert_isoms} we can write every element of $\mathrm{Isom}(\ol{A}_+^\circ)$ as the product of an element in $\mathrm{Proj}(A_+)$ with an element in $C_2$, and the characterization of the isometry group now follows immediately from Lemma~\ref{lem:equivalences_R2_and_SF}.

\begin{theorem}\label{thm:isom_group}
Let $A$ be a unital JB-algebra. Then the group of surjective Hilbert's metric isometries $\mathrm{Isom}(\ol{A}^\circ_+,d_H)$ satisfies $\mathrm{Isom}(\ol{A}^\circ_+,d_H)\cong\mathrm{Proj}(A_+)$ if and only if $A$ is either $\R^2$ or a spin factor. In all other cases $\mathrm{Isom}(\ol{A}^\circ_+,d_H)\cong\mathrm{Proj}(A_+)\rtimes C_2$ where $C_2$ is the group of order two generated by the inversion map $\iota$.
\end{theorem}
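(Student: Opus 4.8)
The plan is to leverage the two structural facts established immediately before the statement. By Theorem~\ref{thm:char_Hilbert_isoms} together with the preceding discussion, $\mathrm{Proj}(A_+)$ is a normal subgroup of $\mathrm{Isom}(\ol{A}^\circ_+,d_H)$ and every surjective isometry factors as a projectivity composed with an element of $C_2 = \langle \iota \rangle$, so that $\mathrm{Isom}(\ol{A}^\circ_+,d_H) = \mathrm{Proj}(A_+) \cdot C_2$. The entire theorem then reduces to a single dichotomy: deciding whether or not the inversion $\iota$ already lies in $\mathrm{Proj}(A_+)$. If it does, then $C_2 \subseteq \mathrm{Proj}(A_+)$ and the product collapses to $\mathrm{Isom}(\ol{A}^\circ_+,d_H) = \mathrm{Proj}(A_+)$; if it does not, then since $\iota$ is an involution the intersection $C_2 \cap \mathrm{Proj}(A_+)$ is trivial, and the normality of $\mathrm{Proj}(A_+)$ upgrades the product to an internal semidirect product $\mathrm{Proj}(A_+) \rtimes C_2$, with conjugation action the one computed before the statement, $\iota \circ \tau \circ \iota \in \mathrm{Proj}(A_+)$.

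Next I would identify membership of $\iota$ in $\mathrm{Proj}(A_+)$ with condition $(1)$ of Lemma~\ref{lem:equivalences_R2_and_SF}. A projectivity is by definition a map $\ol{x} \mapsto \ol{Tx}$ with $T \in \mathrm{Aut}(A_+)$ linear; hence $\iota \in \mathrm{Proj}(A_+)$ precisely when there is a cone automorphism $T$ with $\ol{x^{-1}} = \ol{Tx}$ for all $x \in A_+^\circ$, i.e.\ when $x^{-1}$ is a positive scalar multiple of the linear image $Tx$. This is exactly the assertion that inversion is linear up to scalar multiplication, which is condition $(1)$. Invoking the equivalence $(1) \Leftrightarrow (3)$ from Lemma~\ref{lem:equivalences_R2_and_SF}, I conclude that $\iota \in \mathrm{Proj}(A_+)$ if and only if $A$ is either $\R^2$ or a spin factor, which is the crux of the argument.

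Combining these two steps yields the theorem: in the $\R^2$ or spin factor case $\iota$ is a projectivity, so $\mathrm{Isom}(\ol{A}^\circ_+,d_H) = \mathrm{Proj}(A_+)$, while in every other case $\iota \notin \mathrm{Proj}(A_+)$ and $\mathrm{Isom}(\ol{A}^\circ_+,d_H) = \mathrm{Proj}(A_+) \rtimes C_2$. The one point requiring care, and the main (minor) obstacle, is to confirm that $C_2$ genuinely has order two, i.e.\ that $\iota \neq \mathrm{id}$ on $\ol{A}^\circ_+$; this is where the standing assumption $\dim A \geq 2$ enters. Since $\iota$ fixes $\ol{x}$ only when $x^{-1}$ is a positive scalar multiple of $x$, that is when $x^2 \in \Sp(e)$, I would take a non-scalar $a \in A$ and the positive invertible element $x = e + ta$ for small $t > 0$: then $x^2 = e + 2ta + t^2 a^2$, so $x^2 \in \Sp(e)$ would force $2a + t a^2 \in \Sp(e)$, and letting $t \to 0$ with $\Sp(e)$ closed gives $a \in \Sp(e)$, a contradiction. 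Hence $\iota$ is nontrivial, the trivial-intersection argument goes through, and the semidirect decomposition is legitimate.
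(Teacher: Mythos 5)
Your proof is correct and takes essentially the same route as the paper: the paper's argument is exactly the pre-theorem discussion (the decomposition $\mathrm{Isom}(\ol{A}^\circ_+,d_H)=\mathrm{Proj}(A_+)\cdot C_2$ with $\mathrm{Proj}(A_+)$ normal, via Theorem~\ref{thm:char_Hilbert_isoms} and the conjugation computation) followed by the observation that $\iota\in\mathrm{Proj}(A_+)$ precisely when condition $(1)$ of Lemma~\ref{lem:equivalences_R2_and_SF} holds, i.e.\ precisely when $A$ is $\R^2$ or a spin factor. Your added verification that $\iota\neq\mathrm{id}$ is harmless but redundant in the semidirect-product case, since $\iota\notin\mathrm{Proj}(A_+)$ already forces $\iota\neq\mathrm{id}$ because $\mathrm{id}\in\mathrm{Proj}(A_+)$.
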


\begin{remark}
In view of Theorem~\ref{thm:char_var_isoms} and Theorem~\ref{thm:char_Hilbert_isoms} we would like to remark that since the $\log$ and $\exp$ functions commute with Jordan isomorphisms, it follows that $f\colon \ol{A}^\circ\to \ol{B}^\circ$ is a surjective Hilbert's metric isometry  if and only if $\exp\circ f\circ\log\colon[A]\to[B]$ is a surjective variation norm isometry. In other words, there is a bijective correspondence between surjective Hilbert's metric isometries and surjective variation norm isometries.
\end{remark}

\subsection*{Appendix: proof of Theorem~\ref{thm:Jordan_isomorphism_from_theta}}

\begin{theorem*}
Let $M$ and $N$ be JBW-algebras. If 
\[
L\colon e^M_\perp\subseteq M_*\to e^N_\perp\subseteq N_* 
\]
is an isometric isomorphism, $\eps \in \{\pm 1\}$, and $\theta\colon\P(M)\to\P(N)$ is an orthoisomorphism such that $L(G_p)=\eps G_{\theta(p)}$ for all $p \in \P(M)$, then $\theta$ extends to a Jordan isomorphism $J\colon M \to N$. 
\end{theorem*}

\begin{proof}
By the proof of \cite[Lemma~1]{Dye}, $\theta$ is an order isomorphism and preserves products of operator commuting projections. Write $M=M_2\oplus \tilde{M}$ and $N=N_2\oplus \tilde{N}$ where $M_2$ and $N_2$ are type $I_2$ direct summands, and $\tilde{M}$ and $\tilde{N}$ are JBW-algebras without type $I_2$ direct summands. See \cite[Theorem~5.1.5, Theorem~5.3.5]{HO}. If $\tilde{p}\in\P(M)$ and $\tilde{q}\in\P(N)$ are the central projections such that $\tilde{p}M=\tilde{M}$ and $\tilde{q}N=\tilde{N}$, then $\theta(\tilde{p})=\tilde{q}$ and the restriction $\theta|_{\P(\tilde{p}M)}$ is an orthoisomorphism from $\P(\tilde{M})$ to $\P(\tilde{N})$. By \cite[Corollary~2]{BW} (which also holds for JBW-algebras) this orthoisomorphism extends to a Jordan isomorphism $\tilde{J}\colon \tilde{M}\to\tilde{N}$.

Next, we show that the orthoisomorphism $\theta|_{\P(M_2)}\colon \P(M_2)\to\P(N_2)$ extends to a Jordan isomorphism as well. By \cite[Theorem~2]{S} we can represent
\[
M_2\cong \bigoplus_k L^\infty(\Omega_k,V_k)\quad\mbox{and}\quad
N_2\cong \bigoplus_l L^\infty(\Xi_l, V_l)
\]
where $k,l$ are cardinals, $\Omega_k$ and $\Xi_l$ are measure spaces, and $V_i=H_i\oplus\R$ are spin factors with $\dim H_i=i$. We denote the unit in each $V_k$ by $u$. Let $\Omega := \bigsqcup_k \Omega_k$. By identifying $f\in L^\infty(\Omega)$ with $\omega\mapsto f(\omega) u$, we can view $L^\infty(\Omega)$ as lying inside $M_2$. The center $Z(M_2)$ of $M_2$ equals $L^\infty(\Omega)$ and if $p:= \mathbf{1}_A \in Z(M_2)$, then $Z(pM_2) = L^\infty(A)$. As $\theta$ preserves products of operator commuting projections, it preserves the center, and it is straightforward to see that the restriction $\theta|_{\P(Z(M_2))} \colon \P(Z(M_2)) \to \P(Z(N_2))$ extends to a Jordan isomorphism $T \colon Z(M_2) \to Z(N_2)$.

Let $x\in M_2$. For almost all $\omega\in\Omega$ the element $x(\omega)$ has rank 1 or rank 2, so modulo null sets we can write $\Omega $ as $\Omega=\Omega^1 \sqcup \Omega^2$ where
\[
\Omega^i:=\left\{\omega\in\Omega\colon \#\sigma(x(\omega))=i\right\}.
\]
If we write $q_i:=\mathbf{1}_{\Omega^i}$ for $i=1,2$, then there exist unique $\alpha\in Z(q_1 M_2)$, $\beta,\gamma\in Z(q_2 M_2)$, and $p \in \P(q_2M_2)$ with $p(\omega)$ of rank 1 a.e.\ such that
\[
x(\omega):=
\begin{cases}
\alpha(\omega)u & \mbox{ if $\omega \in \Omega^1$}\\
\beta(\omega)p(\omega)+\gamma(\omega)p(\omega)^\perp& \mbox{ if $\omega \in \Omega^2$}\end{cases}
\]
which yields $x=\alpha+\beta p+\gamma p^\perp$ as a unique representation. Define $J_2\colon M_2\to N_2$ by
\[
J_2(x):=T\alpha +T\beta \theta(p)+T\gamma \theta(p)^\perp.
\]

Since $\theta$ preserves central projections and orthogonality, it maps a.e.\ rank 1 projections to a.e.\ rank 1 projections by \cite[Lemma~4.17]{LRW2}. Now $x \in \P(M_2)$ if and only if $\alpha,\beta,\gamma\in\P(Z(M_2))$, and in this case, since $T$ extends $\theta|_{\P(Z(M_2))}$,
\[
J_2(x)=T\alpha+T\beta \theta(p)+T\gamma \theta(p)^\perp=\theta(\alpha)+\theta(\beta)\theta(p)+\theta(\gamma)\theta(p)^\perp=\theta(\alpha)+\theta(\beta p)+\theta(\gamma p^\perp)=\theta(x)
\]
as $\theta$ preserves products of operator commuting projections and is an order isomorphism. So $J_2$ extends $\theta|_{\P(M_2)}$. Now the hard part is to show that $J_2$ is linear.

For $\mu\in\R$ and the unit $e_2\in M_2$ we have $J_{2}(x+\mu e_2)=J_2(x)+\mu e_2$, so $J_2$ induces the quotient map $[J_2]\colon[M_2]\to[N_2]$ defined by $[J_2]([x]):=[J_2 x]$. We claim that $[J_2]$ coincides with the linear map $\eps(L^{-1})^*$ on $[M_2]$. Indeed, let $x \in M_2$ be such that $x=\alpha+\beta p+\gamma p^\perp$ where $\alpha=\sum_i\alpha_i\mathbf{1}_{A_i},\beta=\sum_j\beta_j\mathbf{1}_{B_j}$, and
$\gamma=\sum_k\gamma_k\mathbf{1}_{C_k}$ are step functions. Since $\theta$ preserves products of operator commuting projections and the fact that $T$ maps step functions to step functions, Lemma~\ref{lem:link_orthomorphism_L*} implies that
\begin{align*}
[J_2]([x])&=[J_2(x)]=[T\alpha+T\beta \theta(p)+T\gamma \theta(p)^\perp] \\
&=\sum_i\alpha_i[\theta(\mathbf{1}_{A_i})]+\sum_j\beta_j[\theta(\mathbf{1}_{B_j} p)]+\sum_k\gamma_k[\theta(\mathbf{1}_{C_k} p^\perp)] \\ 
&=\sum_i\alpha_i  \eps(L^{-1})^*[\mathbf{1}_{A_i}]+\sum_j\beta_j  \eps(L^{-1})^*[\mathbf{1}_{B_j} p]+\sum_k\gamma_k \eps(L^{-1})^*[\mathbf{1}_{C_k} p^\perp] \\
&= \eps(L^{-1})^*[x].
\end{align*}
Now, for general $x=\alpha+\beta p+\gamma p^\perp\in M_2$ let $\alpha'$, $\beta'$, and $\gamma'$ be approximating step functions for $\alpha$, $\beta$, and $\gamma$. If we put $y:=\alpha'+\beta' p+\gamma' p^\perp$, then
\begin{align*}
\|x-y\|&\le\|\alpha-\alpha'\|+\|\beta-\beta'\|+
\|\gamma-\gamma'\|
\end{align*}
and
\begin{align*}
\|J_2(x)-J_2(y)\|&\le\|\alpha-\alpha'\|+\|\beta-\beta'\|+
\|\gamma-\gamma'\|
\end{align*}
as $T$ is an isometry, so both norms can be made arbitrarily small. This implies that
\begin{align*}
\|[J_2]([x]) - \eps(L^{-1})^*[x] \|_q &\le \|[J_2]([x])-[J_2]([y])\|_q
+ \|[J_2]([y]) - \eps(L^{-1})^*[y]\|_q \\
&\quad + \|\eps(L^{-1})^*[y] - \eps(L^{-1})^*[x]\|_q \\
&= \|[J_2]([x])-[J_2]([y])\|_q + \| \eps(L^{-1})^*([y]-[x])\|_q \\
&\le \|[J_2(x)-J_2(y)]\|_q + \|[y-x]\|_q \\
&\le 2\|J_2(x)-J_2(y)\| + 2\|y-x\|
\end{align*}
can be made arbitrarily small, and we conclude that $[J_2] = \eps(L^{-1})^*$ on $[M_2]$.

Let $\phi$ be a state on $Z(N_2)=L^\infty(\Xi)$. Then $T^*\phi$ is a state on $Z(M_2)= L^\infty(\Omega)$, and define the functionals $\mathrm{tr}\otimes T^*\phi\in M_2^*$ and $\mathrm{tr}\otimes\phi\in N_2^*$ by

\[
(\mathrm{tr}\otimes T^*\phi)(x):=T^*\phi(\omega\mapsto\mathrm{tr}(x(\omega)))\quad\mbox{and}\quad
(\mathrm{tr}\otimes \phi)(y):=\phi(\xi\mapsto\mathrm{tr}(y(\xi))).
\]
Put $M_0:=\ker (\mathrm{tr}\otimes T^*\phi)$ and $N_0:= \ker (\mathrm{tr}\otimes \phi)$. Since $e_2\notin M_0$ and $e_2\notin N_0$, the corresponding quotient maps $\pi_M \colon M_0 \to [M_2]$ and $\pi_N \colon N_0 \to [N_2]$ are linear isomorphisms. Furthermore, we have that $J_2(M_0)\subseteq N_0$. Indeed, if $x\in M_2$, then since $\theta(p)$ is a.e.\ rank 1, 
\[
(\mathrm{tr}\otimes \phi)(J_2(x))=(\mathrm{tr}\otimes \phi)(T\alpha+T\beta \theta(p)+T\gamma \theta(p)^\perp)=\phi(2T\alpha+T\beta+T\gamma).
\]
Therefore, for $x\in M_0$ it follows that 
\begin{align*}
(\mathrm{tr}\otimes \phi)(J_2(x))&=\phi(2T\alpha+T\beta+T\gamma)=
\phi(T(2\alpha+\beta+\gamma))\\&=T^*\phi(2\alpha+\beta+\gamma)=(\mathrm{tr}\otimes T^*\phi)(x)\\&=0.
\end{align*}
Now, if $x\in M_0$, then $J_2(x)\in N_0$ which shows the last equality of the equation
\[
\pi_N^{-1}\circ[J_2]\circ \pi_M(x) = \pi_N^{-1}[J_2][x] = \pi_N^{-1}[J_2(x)]=J_2(x),
\]
hence $J_2|_{M_0}$ is linear. As $M_2=M_0\oplus \Sp(e_2)$ and $N_2=N_0\oplus \Sp(e_2)$, and we have $J_2(x+\mu e_2)=J_2(x)+\mu e_2$ for all $\mu\in\R$, it follows that $J_2=J_2|_{M_0}\oplus\mathrm{Id}_{\Sp(e_2)}$ is linear.

Moreover, we have
\begin{align*}
\|x\|&=\esssup_{\omega\in\Omega}\|x(\omega)\|=\max\{\|\alpha\|_\infty,
\|\beta\|_\infty,\|\gamma\|_\infty\}\\&
=\max\{\|T\alpha\|_\infty,
\|T\beta\|_\infty,\|T\gamma\|_\infty\}\\&=
\esssup_{\xi\in\Xi}\|J_2(x)(\xi)\|\\&=\|J_2(x)\|,
\end{align*}
so $J_2$ is an isometry. Since $J_2$ is unital, it is a Jordan isomorphism by \cite[Corollary~2.2]{LRW2}. Hence $J_2\oplus\tilde{J}\colon M\to N$ is a Jordan isomorphism that extends $\theta$.
\end{proof}

\footnotesize
\bibliographystyle{alpha}
\bibliography{hilbert_jb_bib}

\end{document}